\def\beq{\begin{equation}}
\def\eeq{\end{equation}}
\def\e{\varepsilon}
\newenvironment{res}
               {\begin{equation}
\begin{minipage}{0.85\textwidth}}
               { \end{minipage}\end{equation} }
\def\ber{\begin{res} }
\def\eer{\end{res}}
\numberwithin{equation}{section}
\newtheorem{thm}{Theorem}[section]
\newtheorem{theorem}[thm]{Theorem}
\newtheorem{cor}[thm]{Corollary}
\newtheorem{proposition}[thm]{Proposition}
\theoremstyle{definition}
\newtheorem{remark}[thm]{Remark}
\newtheorem{df}[thm]{Definition}
\def\section{\@startsection {section}{1}{\z@}{3.5ex plus 1ex minus
    .2ex}{2.3ex plus .2ex}{\large\bf}}
    \def\subsection{\@startsection{subsection}{2}{\z@}{3.25ex plus 1ex minus
 .2ex}{1.5ex plus .2ex}{\bf}}
\newcommand{\fa}{\mathfrak{a}}
\newcommand{\fb}{\mathfrak{b}}
\newcommand{\fg}{\mathfrak{g}}
\newcommand{\fh}{\mathfrak{h}}
\newcommand{\fk}{\mathfrak{k}}
\newcommand{\fn}{\mathfrak{n}}
\newcommand{\fp}{\mathfrak{p}}
\newcommand{\ft}{\mathfrak{t}}
\newcommand{\ffR}{\mathfrak{R}}
\newcommand{\fU}{\mathfrak{U}}
\newcommand{\fZ}{\mathfrak{Z}}
\newcommand{\bba}{\mathbb{A}}
\newcommand{\bbc}{\mathbb{C}}
\newcommand{\bbd}{\mathbb{D}}
\newcommand{\bbh}{\mathbb{H}}
\newcommand{\bbn}{\mathbb{N}}
\newcommand{\bbq}{\mathbb{Q}}
\newcommand{\bbr}{\mathbb{R}}
\newcommand{\bbz}{\mathbb{Z}}
\newcommand{\bfz}{\mathfrak{Z}}
\newcommand{\Ca}{\mathcal{A}}
\newcommand{\Cb}{\mathcal{B}}
\newcommand{\Cc}{\mathcal{C}}
\newcommand{\Cd}{\mathcal{D}}
\newcommand{\Ce}{\mathcal{E}}
\newcommand{\Cf}{\mathcal{F}}
\newcommand{\Cj}{\mathcal{J}}
\newcommand{\Ck}{\mathcal{K}}
\newcommand{\Cs}{\mathcal{S}}
\newcommand{\Cz}{\mathcal{Z}}
\newcommand{\bA}{\mathbf{A}}
\newcommand{\bH}{\mathbf{H}}
\newcommand{\ga}{\alpha}
\newcommand{\gb}{\beta}
\newcommand{\gc}{\gamma}
\newcommand{\gd}{\delta}
\newcommand{\gt}{\tau}
\newcommand{\gl}{\lambda}
\newcommand{\go}{\omega}
\newcommand{\gep}{\epsilon}
\newcommand{\cpv}{\begin{center}\begin{minipage}[t]{14cm}\small{\it Proof.} }
\newcommand{\fpv}{\fim\end{minipage}\end{center}}
\newcommand{\fim}{\hfill $\Box$}
\DeclareMathOperator{\Hom}{Hom}
\DeclareMathOperator{\End}{End}
\newcommand {\sfk}{\mathsf{k}}
\newcommand {\Omit}[1]{}
\newcommand {\Omitcomment}[1]{}
\newcommand {\nocomment}[1]{}
\newcommand {\wrt}{with respect to }
\newcommand {\wt}[1]{\widetilde{#1}}
\newcommand {\nak}{^{\operatorname{\scriptscriptstyle{NAK}}}}
\newcommand {\aand}{\qquad\text{and}\qquad}
\newcommand {\ie}{{\it i.e., }}
\begin{document}
\title[On Harish--Chandra's Isomorphism] 
{On Harish--Chandra's Isomorphism}
\dedicatory{On the occasion of Harish--Chandra's centenary celebration}
\author[E. Opdam]{Eric Opdam}
\address{Korteweg de Vries Institute for Mathematics\\
University of Amsterdam\\
Science Park 904, 1098 XH Amsterdam, The Netherlands\\
email: e.m.opdam@uva.nl}
\author[V. Toledano Laredo]{Valerio Toledano Laredo}
\thanks{VTL was supported in part through the NSF grant DMS--2302568.}
\address{Department of Mathematics, Northeastern University\\ 
360 Huntington Avenue, Boston, MA, 02115, USA\\
email: v.toledanolaredo@neu.edu}

\Omit{ArXiv abstract:
This is the text of a talk given by the first author at the Harish-Chandra centenary meeting held in Allahabad
in October 2023. It reviews Harish-Chandra's isomorphism and its many applications to representation theory
and mathematical physics. It also announces the existence and uniqueness of nonsymmetric shift operators
for an arbitrary root system. These are differential--reflection operators with a transmutation property relative to
Dunkl--Cherednik operators: they shift the parameter k of these operators by 1, and restrict on symmetric
functions to the hypergeometric shift operators introduced by the first author.
}

\begin{abstract}
Harish--Chandra's famous isomorphism (1957) gives a description of the algebra of invariant differential 
operators of a Riemannian globally symmetric space $X$ as a polynomial algebra with $n$ indeterminates, where 
$n$ denotes the rank of $X$. This key result has inspired many developments connecting
representation theory, mathematical physics, special functions and algebraic geometry. After discussing the classical 
context for real and $p$--adic groups, we focus on a generalisation of Harish--Chandra's system of differential equations 
for zonal spherical functions, called the hypergeometric system of differential equations for root systems. An  
important role is played by graded affine Hecke algebras, which creates various interesting new perspectives. We close with 
a discussion of the existence and uniqueness of {\it nonsymmetric} shift operators, which are certain differential--reflection 
operators satisfying a ``transmutation property'' relative to Dunkl--Cherednik operators.
\end{abstract}
\keywords{Harish--Chandra homomorphism, Satake isomorphism, Affine Hecke algebra, Dunkl--Cherednik operator, Shift operator}
\subjclass[2000]{Primary 20C08; Secondary 22D25, 22E35, 43A30}
\maketitle\tableofcontents
\section{Introduction}
Let $G_c$ be a connected complex reductive group, and $\Cz(G_c)$ the algebra of linear differential
operators on $G_c$ which are invariant under $G_c \times G_c$ acting by left and right translations on $G_c$.   
In his 1951 paper \cite{HC1}, Harish--Chandra gave an explicit description of $\Cz(G_c)$, a result which is 
fundamental in representation theory. To describe the result, let $\fg_c$ denote the Lie algebra of $G_c$. 
We identify the universal enveloping algebra $\fU$ of $\fg_c$ with the algebra of left-invariant differential operators on $G_c$. 
Under this identification $\Cz(G_c)$ corresponds to the algebra $\fU^{\textup{Ad}(G_c)}$ of ${\textup{Ad}(G_c)}$--invariant 
elements of $\fU$, which equals in our setting to the center 
$\fZ=\fU^{\textup{Ad}(G_c)}\subset \fU$ of $\fU$. 
Let $\fh_c\subset \fg_c$ be a Cartan subalgebra of $\fg_c$, and let $S(\fh_c)\xhookrightarrow{}\fU$ be the canonical 
embedding of the symmetric algebra $S(\fh_c)=\fU(\fh_c)$ of $\fh_c$ as a subalgebra of $\fU$. 
Harish--Chandra \cite[Part III]{HC1} showed that there is an algebra isomorphism 
\begin{equation}
\gc: \Cz(G_c)\xrightarrow{\simeq} S(\fh_c)^W
\end{equation}
where $W$ denotes the Weyl group of $(\fg_c,\fh_c)$. 


The interest of Harish--Chandra in the homomorphism $\gamma$ is revealed in the introduction of
his 1957 paper \cite{HC2}, where he sketched with broad strokes an inspired vision for his long--term
goal of computing the explicit Plancherel formula for a real reductive group $G$. A key role is played
by the study  of orbital integrals, and their relation to the homomorphism $\gamma$. 

To wit, let $\theta$ denote a Cartan involution of $G$, and $H$ a $\theta$--stable Cartan subgroup
of $G$ with Lie algebra $\fh\subset\fg$. We denote the root system of $(\fg_\bbc,\fh_\bbc)$ by $\Phi
\subset \fh_\bbc^*$, and fix a set of positive roots $\Phi_+\subset \Phi$. 
Harish--Chandra introduced orbital integrals relative to a Cartan subgroup $H$ of $G$ 
in \cite{HC2}. We shall assume for simplicity that the Weyl vector $\delta=\frac{1}{2}\sum_{\ga\in\Phi_+}\ga$ is analytically 
integral on $H$. The Weyl denominator $\Delta(h)=\prod_{\ga\in \Phi_+}(h^{\ga/2}-h^{-\ga/2})$ is then a well--defined real analytic function 
on $H$.

Let $H'=\{h\in H \mid \Delta(h)\not=0\}\subset H$ be the open and dense subset of regular elements.
Harish--Chandra defined the orbital integral of a function $f\in C_c^\infty(G)$ as the function $F_f$ on $H'$
given by
\begin{equation}
F_f(h)=\gep_+(h)\Delta(h)\int_{G/H}f(x^*h(x^*)^{-1})dx^* 
\end{equation}
where $\gep_+(h)\in \{\pm 1\}$ is the sign of the product $\Delta_+(h)$ of the factors of $\Delta(h)$ corresponding 
to the non-imaginary roots. He then moved on to show that $F_f\in C^\infty(H')$ has bounded support, is a bounded function,  
and extends to a smooth function on the larger set $H''\subset H$ where only the factors of $\Delta$ corresponding 
to the noncompact imaginary roots are nonvanishing.

Subsequently, he showed that for $f\in C^\infty_c(G)$ and $z\in \Cz(G)$, the following remarkable formula holds
on $H'$
\begin{equation}
F_{zf}(h)=\gamma(z)F_f(h)
\end{equation}
Since $\gamma(z)$ is an invariant differential operator for the abelian group $H$, we see that the transform $f\to
F_f$ has a "transmutation property": it connects the harmonic analysis on $G$ in the open subset of  $G$ of elements
which are conjugate to elements of $H'$, with the harmonic analysis on the abelian group $H$. Harish--Chandra
deduced from this that in situations where $F_f$ extends to a compactly supported function on $H$, 
its Fourier transform at a character of the torus $\chi\in\hat{H}$ is a $G$--invariant eigendistribution. 
Of course, one needs to adopt this construction to the general situation, where $F_f$ does have jumps when 
moving from one connected component of $H''$ to a neighbouring one. This is a serious challenge, but fortunately 
the case $G=\textup{SL}_2(\mathbb{R})$ provides a guideline of how one could go about this.  

The picture that emerges in \cite{HC2} is quite compelling. Harish--Chandra realised, already at that very early
stage, the crucial importance of compact Cartan subgroups for the existence and parameterisation of  the discrete
series. In fact, the discrete series characters of $G$ should be related to Fourier transforms of orbital integrals 
relative to a compact Cartan subgroup. At the other extreme, the most continuous part of the Plancherel formula
of $G$ is parameterised by the Fourier transforms of the orbital integrals relative to a maximally split $\theta$--stable
Cartan subgroup.

A case in point is the $K$--spherical principal series, where $K\subset G$ is a maximal compact subgroup, which
constitutes a component of the most continuous part of the Plancherel decomposition. This component is studied
via the Fourier transform for $K$--spherical functions, a subject with very classical origins. 
This subject had already been studied by Bargmann \cite{Bar}, Gelfand \cite{Gel}, Gelfand--Naimark \cite{GeNa}, Godement \cite{Go}  
and indeed Harish--Chandra himself \cite{HC4}, as a natural generalisation of the theory of spherical harmonics. 
On the one hand, the compactness of $K$ makes it easy to see that the spectral measure of the $C^*$--completion 
of the commutative convolution algebra $L^1(G/\!\!/K)$\footnote{We denote by $G/\!\!/K$ the double coset space
$K\backslash G/K$.} acting on $L^2(G/\!\!/K)$ is a component of the most continuous part 
of the Plancherel measure of $G$. On the other hand, and this 
is one of the important insights Harish--Chandra provides in \cite{HC3}, this is the Plancherel measure of 
the Riemannian symmetric space $X=G/K$, which can be expressed 
in terms of the asymptotic behaviour of the elementary spherical functions "at infinity". He thus obtained a description 
of the spherical Plancherel measure in terms of the geometric and algebraic structure of $X$.  
The orbital integrals in this spherical case are smooth, thus the complication of the "jumps" of $F_f$ disappears. 
These ideas formed the basis of Harish--Chandra's approach to the determination of 
the explicit Plancherel formula for real reductive groups. 
\subsection*{Acknowledgments}
It is a pleasure to thank Dan Ciubotaru for his help on the topic of spherical unitary representations 
and interesting discussions.
\section{Harish--Chandra's homomorphism}\label{sec:HCH}
Harish--Chandra's homomorphism $\gc$ has a natural generalisation in the context of symmetric spaces, viewing 
$G$ as the symmetric space $G\times G/\textup{diag}(G)$. Of special interest to us is the case 
of a Riemannian symmetric space $X=G/K$, a case which Harish--Chandra worked out in \cite{HC3} in order to study the 
$K$--spherical part of the Plancherel measure of $G$. This is the main case we will be discussing in this paper.
There are further generalisations to much more general $G_c$-varieties, see \cite{Knop}.

To fix notations, let $G$ be a linear non-compact real reductive group, with Cartan involution $\theta$, and Cartan decomposition $G=KP$. 
Here $K=G^\theta$ is a maximal compact subgroup of $G$, and $P=\textup{exp}(\fp)$ where $\fp\subset \fg$ is the 
$-1$-eigenspace of $\theta$.\footnote{We will denote $d\theta\in\textup{GL}(\fg)$ simply by $\theta$} Recall that $\exp:\fp\to P$ is a 
diffeomorphism. For the Riemannian globally symmetric space $X=G/K$, we may identify the algebra 
$\bbd(X)$ of $G$--invariant differential operators on $X$ with 
$\fU^{\textup{Ad}(K)}/(\fU^{\textup{Ad}(K)}\cap \fU\fk)$. 
Let $\fh\subset \fg$ be a maximally split $\theta$-stable Cartan subalgebra of $\fg$, and let $\fa=\fh\cap \fp$ be the corresponding 
maximal abelian subspace of $\fp$. In this section, let $W\subset \textup{GL}(\fa)$ denote denote the little Weyl group, 
i.e. 
\begin{equation}\label{eq:W}
W=N_K(\fa)/C_K(\fa)
\end{equation} 
The set $\Phi$ of nonzero restrictions to $\fa$ of the roots in $\Phi(\fg_\bbc,\fh_\bbc)$ form a 
(possibly non--reduced) root system whose associated reflection group is equal to $W$. 
The Cartan decomposition $G=KP$ can be refined to the decomposition $G=KAK$, using that $P=\bigcup_{k\in K}kAk^{-1}$. 
If $g\in G$ and we write $g=k_1ak_2$, then the $W$-orbit $Wa\subset A$ is uniquely determined by $g$. 

\subsection{Zonal spherical functions}

A {\it spherical function} on $G$ is a function which is bi-$K$--invariant. By the Cartan decomposition $G=KAK$, such a
function satisfies $f(\theta(g))=f(g^{-1})$. Since $\theta$ is an automorphism of $G$  and $g\to g^{-1}$ is an anti-automorphism
of $G$, this observation yield a first fundamental result on spherical functions due to Gelfand \cite{Gel}. 

\begin{theorem}(\cite{Gel})
The convolution algebra $C_c(G/\!\!/K)$ of spherical continuous compactly supported functions is a commutative algebra. 
\end{theorem}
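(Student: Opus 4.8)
The plan is to run Gelfand's symmetry trick, with the anti-automorphism $\tau(g):=\theta(g)^{-1}=\theta(g^{-1})$ playing the central role. First I would record the structural facts needed about the ambient group: since $G$ is reductive it is unimodular, so a fixed Haar measure $dg$ is simultaneously left-invariant, right-invariant and inversion-invariant, and $C_c(G)$ with the convolution $(\phi*\psi)(g)=\int_G\phi(x)\psi(x^{-1}g)\,dg$ is an associative algebra containing $C_c(G/\!\!/K)$ as a subalgebra (the convolution of two bi-$K$-invariant compactly supported continuous functions is again of this type, by the usual support estimate $\supp(\phi*\psi)\subset\supp\phi\cdot\supp\psi$ and a change of variables absorbing the $K$-translations). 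I would also note that $\theta$, being an automorphism of the unimodular group $G$ of finite order, preserves $dg$: $\theta_*\,dg$ is again a Haar measure, hence $\theta_*\,dg=c\,dg$ with $c>0$, and $\theta^2=\id$ forces $c=1$. Consequently $\tau$ is an involutive \emph{anti}-automorphism of $G$ that preserves $dg$ (it is a composition of $\theta$ and inversion, each measure preserving).

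Next I would isolate the key point, which is essentially the observation already stated in the text: every $f\in C_c(G/\!\!/K)$ satisfies $f\circ\tau=f$. Writing $g=k_1ak_2$ according to $G=KAK$, and using that $\theta$ fixes $K$ pointwise while $\theta(a)=a^{-1}$ for $a\in A=\exp(\fa)\subset P$ (as $\theta=-\id$ on $\fp$), one gets $\theta(g)=k_1a^{-1}k_2\in Ka^{-1}K$, the same $K$-double coset as $g^{-1}=k_2^{-1}a^{-1}k_1^{-1}$; bi-$K$-invariance of $f$ then gives $f(\theta(g))=f(g^{-1})$, and replacing $g$ by $g^{-1}$ yields $f(\tau(g))=f(\theta(g^{-1}))=f(g)$. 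I would also observe that $f\circ\tau$ is again bi-$K$-invariant with compact support, so $f\mapsto f\circ\tau$ preserves $C_c(G/\!\!/K)$.

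Finally I would combine these with the general identity
\begin{equation}\label{eq:antiauto-conv}
(\phi*\psi)\circ\tau=(\psi\circ\tau)*(\phi\circ\tau),\qquad \phi,\psi\in C_c(G),
\end{equation}
valid for any $dg$-preserving anti-automorphism $\tau$: substituting $x=\tau(y)$ in the defining integral, using $\tau(y)^{-1}\tau(g)=\tau(y^{-1})\tau(g)=\tau(gy^{-1})$ and the alternate form $(\alpha*\beta)(g)=\int_G\alpha(gy^{-1})\beta(y)\,dy$, gives \eqref{eq:antiauto-conv} at once. Now for $f_1,f_2\in C_c(G/\!\!/K)$ the product $f_1*f_2$ again lies in $C_c(G/\!\!/K)$, so the previous paragraph gives $(f_1*f_2)\circ\tau=f_1*f_2$; on the other hand \eqref{eq:antiauto-conv} together with $f_i\circ\tau=f_i$ gives $(f_1*f_2)\circ\tau=(f_2\circ\tau)*(f_1\circ\tau)=f_2*f_1$. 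Hence $f_1*f_2=f_2*f_1$, proving commutativity.

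Every computation here is a routine change of variables, so there is no genuine obstacle. The only points that deserve care are the order-reversal bookkeeping in \eqref{eq:antiauto-conv} (one really does get $(\psi\circ\tau)*(\phi\circ\tau)$, not $(\phi\circ\tau)*(\psi\circ\tau)$) and the verification that $\tau$ — not merely $\theta$ — preserves $dg$; the conceptual heart is just the identity $f\circ\tau=f$ for spherical $f$, which is precisely the Gelfand-pair criterion for $(G,K)$.
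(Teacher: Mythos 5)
Your argument is correct and is exactly the route the paper indicates: the identity $f(\theta(g))=f(g^{-1})$ for bi-$K$-invariant $f$ (via $G=KAK$), combined with $\theta$ being a measure-preserving automorphism and inversion an anti-automorphism, gives $(f_1*f_2)\circ\tau=f_2*f_1=f_1*f_2$. You have simply filled in the routine verifications (unimodularity, $\tau$ preserving $dg$, the order-reversal identity) that the paper leaves to Gelfand's original reference.
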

Let us recall some elements of the theory of spherical functions (cf. \cite{Go}). A joint eigenfunction $\varphi$ \wrt
convolution by elements of $C_c(G/\!\!/K)$ defines a homomorphism of $\bbc$-algebras $\Cf_{sph}^\varphi:C_c(G/\!\!/K)\to
\bbc$ such that for any $f\in C_c(G/\!\!/K)$ we have $f*\varphi=\Cf_{sph}^\varphi(f)\,\varphi$. 
It is not hard to see that the converse also holds: for any $\bbc$-algebra homomorphism $\Cf: C_c(G/\!\!/K)\to \bbc$, there
is a joint eigenfunction $\varphi\in C(G/\!\!/K)$ such that $\Cf=\Cf_{sph}^\varphi$. Observe that necessarily $\varphi(1)\not=0$. 

We call a joint eigenfunction $\varphi$ which is normalised by $\varphi(1)=1$ an {\it elementary} or {\it zonal}
spherical function. In that case, one easily sees that the corresponding joint eigenvalue $\Cf^\varphi_{sph}$ is
given by
\begin{equation}\label{eq:transf}
\Cf^\varphi_{sph}(f)=\int_G f(g)\varphi(g^{-1})dg
\end{equation}
In fact $\varphi$ is a zonal spherical function if and only if $\varphi$ satisfies the functional equation: 
\begin{equation}\label{eq:zonal}
\int_{K}\varphi(xky)dk=\varphi(x)\varphi(y)
\end{equation}
Indeed, for any $f\in C_c(G/\!\!/K)$ and $\varphi$ zonal spherical we have that: 
\begin{equation*} 
f*\varphi(y)
=\int_G f(g)\varphi(g^{-1}y)dg
=\int_G f(g)\int_K \varphi(g^{-1}ky)dkdg
\end{equation*}
and also 
\begin{equation*}
f*\varphi(y)
=\Cf^\varphi_{sph}(f)\varphi(y)
=\int_G f(g)\varphi(g^{-1})\varphi(y)dg
\end{equation*}
For any $y\in G$ fixed, this holds for all $f\in C_c(G/\!\!/K)$ if and only if (\ref{eq:zonal}) holds (with $x=g^{-1}$), 
since both hand sides are spherical functions in $x$, finishing the proof.

In a similar vein as the commutativity proof of $C_c(G/\!\!/K)$, it is not hard to see that for an invariant differential operator
$D\in \bbd(X)$, the formal transpose $D^t$ is equal to ${}^\theta{D}\in \bbd(X)$. 
Hence $\bbd(X)$ is commutative as well. Harish--Chandra's homomorphism gives an explicit description of this commutative algebra. 
First we fix a choice of positive roots of $\Phi(\fg_\bbc,\fh_\bbc)_+$, which also 
determines a set $\Phi_+\subset \Phi$ of positive roots restricted roots. We write $N\subset G$ for corresponding unipotent group, 
which gives rise to the Iwasawa decomposition $G=KAN$, where $A=\textup{exp}(\fa)$. 
We are now ready to formulate Harish--Chandra's theorem: 
\begin{thm}\label{thm:HChom}(\cite[Thm. 1]{HC3}) 
Given $D\in \bbd(X)$, define 
\[\gc(D):={}^\gt(\gc'(D))\in S(\fa_\bbc)\]
where $\gc'(D)$ 
is the component of 
$D\in \bbd(X)=\fU^{\textup{Ad}(K)}/(\fU^{\textup{Ad}(K)}\cap \fU\fk)$ in $S(\fa_\bbc)$ relative
to the decomposition $\fU=S(\fa_\bbc)\oplus \fk\fU\oplus \fU\fn$, and $\gt$ denotes the translation
over $\rho$, where $\rho\in\fa^*$ is  the half sum of the restricted positive roots $\Phi_+$
(i.e. $\gc(D)(\gl):=\gc'(D)(\gl-\rho)$).

Then, $\gamma$ defines an algebra isomorphism 
\begin{equation}
\gc:\bbd(X)\xrightarrow{\simeq} S(\fa_\bbc)^W
\end{equation}
\end{thm}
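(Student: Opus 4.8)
The plan is to verify four things in turn: that $\gc'$ (hence $\gc$) is well defined on $\Cd(X)$; that $\gc$ is an algebra homomorphism; that $\gc(\Cd(X))\subseteq S(\fa_c)^W$; and that $\gc\colon\Cd(X)\to S(\fa_c)^W$ is bijective. Throughout one realises $\Cd(X)=\fU^{\Ad(K)}/\bigl(\fU^{\Ad(K)}\cap\fU\fk\bigr)$ acting on $C^\infty(G/K)$, the space of right $K$-invariant functions on $G$, by left-invariant differential operators; since $\fU\fk$ annihilates this space, the operator attached to a representative $u\in\fU^{\Ad(K)}$ of $D$ is independent of the choice of $u$. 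I would get the first three assertions from the \emph{radial part} of $D$, and the fourth by passing to associated graded algebras.

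For the radial part: if $f$ is bi-$K$-invariant then, by the Cartan decomposition $G=KAK$, the value $(Df)(a)$ for $a$ in the set $A^{\mathrm{reg}}$ of regular elements depends only on $f|_A$; hence there is a differential operator $\delta(D)$ on $A^{\mathrm{reg}}$ with $(Df)|_{A^{\mathrm{reg}}}=\delta(D)\bigl(f|_{A^{\mathrm{reg}}}\bigr)$. A bi-$K$-invariant function is determined by its restriction to $A^{\mathrm{reg}}$, so $D\mapsto\delta(D)$ is an algebra homomorphism; and since $k_1ak_2$ determines the orbit $Wa$, bi-$K$-invariant functions on $A$ are $W$-invariant, which makes $\delta(D)$ a $W$-equivariant operator. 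The computation at the heart of the theorem — due to Harish--Chandra — is that, conjugating by the function $a\mapsto e^{\rho(\log a)}$ on $A$,
\begin{equation*}
e^{\rho}\circ\delta(D)\circ e^{-\rho}=\gc(D)+D',
\end{equation*}
where $\gc(D)\in S(\fa_c)$, viewed as a constant-coefficient operator on $A$, is the operator of the statement, and the coefficients of the correction $D'$ tend to $0$ as $a\to\infty$ in the positive chamber $A^{+}$. One proves this by pushing a representative $u\in\fU^{\Ad(K)}$ of $D$ through $\fU=S(\fa_c)\oplus\fk\fU\oplus\fU\fn$: the $S(\fa_c)$-part acts on the exponential $a\mapsto e^{(\lambda-\rho)(\log a)}$ (the leading term of the elementary spherical function $\varphi_\lambda$) by the scalar $\gc'(D)(\lambda-\rho)=\gc(D)(\lambda)$; the $\fU\fn$-part contributes only lower exponentials $e^{(\lambda-\rho-\mu)(\log a)}$ with $\mu$ a nonnegative combination of restricted roots; and the $\Ad(K)$-invariance of $u$ is used to absorb the $\fk\fU$-part into $\fU\fk$ (which kills right $K$-invariant functions) and lower exponentials — this rewriting, which relies on $[\fk,\fa]\subseteq\fk\oplus\fn$ and $\theta\fn\subseteq\fk\oplus\fn$, is exactly where the shift by $\rho$ enters. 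Three consequences: (a) the left side of the display depends only on $D$, so $\gc'$, hence $\gc$, is well defined on $\Cd(X)$; (b) comparing constant-coefficient parts on the two sides of $\delta(D_1D_2)=\delta(D_1)\,\delta(D_2)$ gives $\gc(D_1D_2)=\gc(D_1)\gc(D_2)$; (c) combined with the $W$-equivariance of $\delta(D)$ (equivalently, with the equivalence $\pi_\lambda\simeq\pi_{w\lambda}$ of spherical principal series within a Weyl orbit), one obtains $\gc(D)(w\lambda)=\gc(D)(\lambda)$, i.e.\ $\gc(D)\in S(\fa_c)^W$.

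For bijectivity I would pass to associated graded algebras. Filter $\fU$ by order ($\operatorname{gr}\fU=S(\fg_c)$), $\Cd(X)$ by the induced filtration, and $S(\fa_c)^W$ by polynomial degree; since the translation $\gt$ by $\rho$ lowers degree, $\gc$ is a filtered homomorphism. Because $K$ is reductive, the functor of $K$-invariants is exact, whence $\operatorname{gr}\Cd(X)=\bigl(\operatorname{gr}(\fU/\fU\fk)\bigr)^{K}=\bigl(S(\fg_c)/S(\fg_c)\fk_c\bigr)^{K}=S(\fp_c)^{K}$ and $\operatorname{gr}S(\fa_c)^W=S(\fa_c)^W$, while $\operatorname{gr}\gc$ is the map on symbols induced by the projection $S(\fg_c)\to S\bigl(\fg_c/(\fk_c\oplus\fn_c)\bigr)=S(\fa_c)$ restricted to $S(\fp_c)^{K}$. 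Via the decomposition $\fp_c=\fa_c\oplus\mathrm{pr}_{\fp_c}(\fn_c)$ and the $K$-invariant nondegenerate form on $\fp$, this is the Chevalley restriction homomorphism $S(\fp_c)^{K}\xrightarrow{\simeq}S(\fa_c)^{W}$ of the symmetric pair $(\fg_c,\fk_c)$, an isomorphism by the Chevalley restriction theorem for symmetric spaces (which rests on $\fp=\Ad(K)\fa$ and $N_K(\fa)/C_K(\fa)=W$). A filtered homomorphism with bijective associated graded is bijective, so $\gc$ is an isomorphism.

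The main obstacle is the displayed radial-part identity: it is the one genuinely structural input, and it is where the Iwasawa decomposition — which is \emph{not} $\theta$-stable — has to be reconciled with the $\Ad(K)$-invariance of the representative $u$, the $\rho$-shift in $\gc$ being forced by precisely that reconciliation. Everything else — well-definedness, multiplicativity, $W$-invariance of the image, and bijectivity — is then formal, modulo the classical Chevalley restriction theorem for $(\fg_c,\fk_c)$.
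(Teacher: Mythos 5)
Your route is the classical one that the paper's citation to \cite[Theorem 1]{HC3} stands in for (the paper itself gives no proof of this theorem): well-definedness and multiplicativity of $\gc$ extracted from the asymptotic expansion of the radial part, essentially the expansion (\ref{eq:hot}) conjugated by $a^\rho$, and bijectivity by passing to associated graded algebras and invoking Chevalley's restriction theorem $S(\fp_c)^K\xrightarrow{\simeq}S(\fa_c)^W$ for the symmetric pair. Those parts are sound: the uniqueness of the decomposition ``constant--coefficient operator plus correction with coefficients in $\sum_{\mu<0}e^\mu S(\fa_c)$'' does give independence of the representative $u$, and it is stable under composition, which gives multiplicativity; and $\operatorname{gr}\gc$ is indeed the Chevalley restriction map because $\fp_c\cap\bigoplus_{\ga}(\fg_\ga\oplus\fg_{-\ga})\subset\fk_c\oplus\fn_c$.

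The genuine gap is step (c), the $W$-invariance of the image. $W$-equivariance of the radial part $\delta(D)$ does \emph{not} yield $\gc(D)\in S(\fa_c)^W$: the expansion $e^{\rho}\circ\delta(D)\circ e^{-\rho}=\partial(\gc(D))+D'$ is attached to the chamber $A_+$ (that is where the correction decays), and conjugation by $e^{\rho}$ breaks the $W$-symmetry; applying $w$ only tells you that the leading constant--coefficient term of $\delta(D)$ at infinity in the chamber $wA_+$ is $\partial(w\gc'(D))$, which says nothing about the leading term in $A_+$. Concretely, in rank one the operator $\coth(\ga/2)\,\partial_{\ga^*}$ is $W$-invariant with coefficients rational in $e^{-\ga}$, yet its $\rho$-shifted leading term $\gl\mapsto\gl(\ga^*)-\rho(\ga^*)$ is not $W$-invariant; so no formal argument of this kind can force invariance of the shifted leading symbol, and one must use that $\delta(D)$ comes from $\Cd(X)$. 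Your parenthetical alternative is the correct input, but it is not ``equivalent'' to radial-part equivariance and it is not free: what one needs is $\varphi_\gl=\varphi_{w\gl}$, classically obtained either from the unitary equivalence of the spherical principal series for $\gl\in i\fa^*$ together with holomorphy of $\gl\mapsto\varphi_\gl(g)$ (the equivalence fails for general complex $\gl$, so the continuation step is essential), or by Harish--Chandra's reduction to real rank one. Granting that, $D\varphi_\gl=\gc(D)(\gl)\varphi_\gl$ (which does follow from your displayed expansion, or from the integral formula (\ref{eq:phiHC})) gives $\gc(D)(w\gl)=\gc(D)(\gl)$ for all $\gl$, and the rest of your argument goes through.
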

We remark that in this case of a maximally 
split Cartan $\fh$, the imaginary roots of $\Phi(\fg_\bbc,\fh_\bbc)$ are all compact. The restriction of this set of roots 
to $\fb:=\fh\cap\fk$ is a root system, namely the set of roots of the compact group $M:=C_K(\fa)$. The minimal $\theta$-stable Levi 
subgroup $C_G(\fa)=MA$ is the Levi factor of the minimal $\theta$-stable parabolic $Q=MAN\subset G$.  

\subsection{An integral formula for zonal spherical functions}\label{ss:D spherical}
The elementary spherical functions can also be characterised as analytic $K$--invariant functions $\varphi$ on $X$ such that $\varphi(1)=1$ and 
which are joint eigenfunctions of $\bbd(X)$. It thus follows that: 

\begin{cor}\label{co:spherical D}
The set of elementary spherical functions is 
parameterised by the set $W\backslash \fa_\bbc^*$ of $W$-orbits of elements $\gl\in \fa_\bbc^*$. The
elementary functions $\varphi_\gl$ which corresponds to $W\gl$ is the unique analytic $K$--invariant 
unction on $X$ satisfying:    
\begin{itemize}
\item[(i)] $D\varphi_\gl=\gc(D)(\gl)\varphi_\gl$ for any $D\in \bbd(X)$.
\item[(ii)] $\varphi_\gl(1)=1$. 
\end{itemize}
\end{cor}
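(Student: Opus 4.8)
\medskip

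\noindent The plan is to use the characterisation of the elementary spherical functions recalled just above to split the statement into two tasks: (a) classifying the joint eigenvalue characters $\Cd(X)\to\bbc$, which is a formal consequence of Harish--Chandra's isomorphism (Theorem~\ref{thm:HChom}) together with elementary invariant theory; and (b) showing each such character is realised by a \emph{unique} normalised $K$-invariant analytic function on $X$, which is the one step requiring genuine analysis.

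For (a), any nonzero joint eigenfunction $\varphi$ of $\Cd(X)$ yields a unital $\bbc$-algebra homomorphism $\chi_\varphi\colon\Cd(X)\to\bbc$ sending $D$ to its eigenvalue on $\varphi$; multiplicativity follows by applying $D_1D_2$ to $\varphi$ and using $\varphi\not\equiv 0$. By Theorem~\ref{thm:HChom} the map $\gc$ is an algebra isomorphism $\Cd(X)\xrightarrow{\simeq}S(\fa_c)^W$, so the $\bbc$-algebra homomorphisms $\Cd(X)\to\bbc$ correspond bijectively to those $S(\fa_c)^W\to\bbc$. Now $S(\fa_c)$ is a finite --- in fact, by Chevalley's theorem, free --- module over the polynomial ring $S(\fa_c)^W$, hence integral over it; so every character of $S(\fa_c)^W$ extends to a character of $S(\fa_c)$, i.e.\ to evaluation at some $\gl\in\fa_c^*$, and two points give the same character of $S(\fa_c)^W$ exactly when they lie in one $W$-orbit, since $W$ acts transitively on the fibres of $\fa_c^*\to\fa_c^*\git W$ and $W$-invariants separate $W$-orbits. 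Hence the characters of $\Cd(X)$ are parametrised by $W\backslash\fa_c^*$, the orbit $W\gl$ corresponding to the character $D\mapsto\gc(D)(\gl)$, which is well defined because $\gc(D)\in S(\fa_c)^W$.

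For (b), I would produce $\varphi_\gl$ via the classical integral formula of Harish--Chandra for zonal spherical functions, $\varphi_\gl(g)=\int_K a(gk)^{\gl-\rho}\,dk$, where $a(g)\in A$ is the $A$-component of $g$ in the Iwasawa decomposition $G=KAN$ attached to $N$ (with the normalisations arranged to match Theorem~\ref{thm:HChom}). This $\varphi_\gl$ is analytic, $K$-invariant, and equals $1$ at $g=1$, and the eigenvalue identity $D\varphi_\gl=\gc(D)(\gl)\varphi_\gl$ drops out of the definition of $\gc$: a lift $\widetilde D\in\fU^{\textup{Ad}(K)}$ of $D$ commutes with right translations by $K$, so it can be pulled inside $\int_K(\cdot)\,dk$ and applied to $\psi\colon g\mapsto a(g)^{\gl-\rho}$; decomposing $\widetilde D$ along $\fU=S(\fa_c)\oplus\fk\fU\oplus\fU\fn$, the summand in $\fU\fn$ annihilates $\psi$ (the $A$-component of $g$ is unchanged under right multiplication by $N$), the summand in $\fk\fU$ integrates to $0$ over $K$ by invariance of the Haar measure, and the summand $\gc'(D)\in S(\fa_c)$ scales $\psi$ by $\gc'(D)(\gl-\rho)=\gc(D)(\gl)$. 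For uniqueness, two normalised solutions of (i) and (ii) differ by a $K$-invariant analytic joint $\gl$-eigenfunction vanishing at $1$; since a bi-$K$-invariant function on $G=KAK$ is determined by its necessarily $W$-invariant restriction to $A$, it suffices to know that the space of $W$-invariant functions on $A$, smooth across the walls, annihilated by all $r(D)-\gc(D)(\gl)$ (with $r(D)$ the radial part of $D$) is one-dimensional. This is Harish--Chandra's theorem on the radial system: $\{r(D):D\in\Cd(X)\}$ has regular singularities along the walls of $A^+$ and at infinity, its joint solution space on $A^+$ has dimension $|W|$ when $\gl$ is regular (spanned by the Harish--Chandra series with exponents $w\gl-\rho$, $w\in W$), and the requirement of extending smoothly across the walls to a $W$-invariant function on $A$ leaves a single line; the case of singular $\gl$ follows by continuity in $\gl$. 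Hence two normalised solutions coincide.

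Combining (a) and (b), the characterisation recalled above identifies the functions $\varphi_\gl$ with the elementary spherical functions, and $W\gl\mapsto\varphi_\gl$ is the asserted bijection, with properties (i) and (ii) holding by construction; note that $\varphi_{w\gl}=\varphi_\gl$ by uniqueness, since $\gc(D)\in S(\fa_c)^W$, which is exactly why the parameter space is $W\backslash\fa_c^*$ and not $\fa_c^*$. The main obstacle is the uniqueness step in (b), i.e.\ the one-dimensionality of the joint $\gl$-eigenspace: this is the place where Harish--Chandra's analysis of the radial part enters and is the only ingredient not formally implied by his isomorphism. One could instead deduce uniqueness from the fact that a normalised $K$-invariant analytic joint eigenfunction automatically satisfies the functional equation~\eqref{eq:zonal}, hence is an elementary spherical function and so is determined by the corresponding character of $C_c(G/\!\!/K)$; but proving that the passage from characters of $\Cd(X)$ to characters of $C_c(G/\!\!/K)$ is injective rests on the same analytic input and is no cheaper.
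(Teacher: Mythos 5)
Most of your write-up is sound and is essentially the argument the paper has in mind (the paper states the corollary as an immediate consequence of the classical characterisation of elementary spherical functions as normalised analytic $K$-invariant joint eigenfunctions of $\Cd(X)$, plus Theorem \ref{thm:HChom}): your part (a), identifying characters of $\Cd(X)$ with $W\backslash\fa_c^*$ via integrality of $S(\fa_c)$ over $S(\fa_c)^W$ and separation of orbits by invariants, is correct, and your existence argument via the integral formula \eqref{eq:phiHC}, with the decomposition $\fU=S(\fa_c)\oplus\fk\fU\oplus\fU\fn$ (the $\fU\fn$ part killed by right $N$-invariance of the Iwasawa kernel, the $\fk\fU$ part killed by averaging over $K$, the $S(\fa_c)$ part giving $\gc'(D)(\gl-\rho)=\gc(D)(\gl)$), is the standard and correct computation.

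The genuine gap is in the uniqueness step, precisely where you yourself locate the analytic content. First, for regular $\gl$ the passage from ``local solution space of dimension $|W|$ on $A'$, spanned by the Harish--Chandra series'' to ``$W$-invariance plus smoothness across the walls leaves a single line'' is not automatic: smoothness at a generic point of one wall only relates the coefficients $a_w$ and $a_{s_iw}$ through a rank-one reduction along that wall, and one must iterate this (or analyse the system at the origin, where all walls meet and \eqref{eq:hot} degenerates) to conclude one-dimensionality; citing Harish--Chandra for this is acceptable, but it should be cited as such rather than presented as an immediate consequence of the $|W|$-bound. Second, and more seriously, ``the case of singular $\gl$ follows by continuity in $\gl$'' is not a proof: uniqueness is an \emph{upper} bound on the dimension of the space of smooth $W$-invariant solutions of \eqref{eq:spheq}, and such dimensions do not behave semicontinuously in the direction you need as $\gl$ degenerates; moreover at singular $\gl$ the exponents $w\gl-\rho$ collide, the series $\Phi_{w\gl}$ and the coefficients $\Gamma_\mu(\gl)$ develop poles and logarithmic solutions appear, so the very description of the solution space you are trying to pass to the limit is no longer available. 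Since the corollary asserts uniqueness for \emph{every} orbit $W\gl$ (including $\gl=0$ and other non-regular parameters), and since your identification of an arbitrary elementary spherical function with some $\varphi_\gl$ also invokes uniqueness at a possibly singular $\gl$, this case cannot be waved away. The fix is either to invoke Harish--Chandra's theorem in its full strength (the $K$-invariant joint eigenspace is one-dimensional for all $\gl$, proved by his analysis of the radial system at the singular set, not by a limiting argument), or to take the route the paper implicitly takes and quote the classical characterisation of elementary spherical functions, whose proof in Harish--Chandra and Helgason already covers all $\gl$; as you correctly note, the functional-equation detour does not avoid this input.
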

From this description and the definition of $\gamma$, it is not hard to see that we have the following integral equation for $\varphi_\gl$:
\begin{proposition} 
Let $H:G\to A$ be the projection relative to the Iwasawa decomposition $G=KAN$. Then, for any $\gl\in\fa_\bbc^*$,
\begin{equation}\label{eq:phiHC}
\varphi_\gl(g)=\int_{K}e^{(\gl-\rho)(\textup{log}(H(gk)))}dk
\end{equation}
where $dk$ is the normalised Haar measure of $K$.
\end{proposition}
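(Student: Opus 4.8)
The plan is to prove that the function on the right, which I denote $\psi_\gl(g):=\int_{k\in K}e^{(\gl-\rho)(\log H(gk))}\,dk$, satisfies the three properties that characterise $\varphi_\gl$ in the preceding Corollary, and then conclude by uniqueness. Analyticity, $K$-invariance and the normalisation $\psi_\gl(1)=1$ follow from standard properties of the Iwasawa projection: the map $K\times A\times N\to G$ is a real-analytic diffeomorphism, so $g\mapsto\log H(g)$ is real analytic and $\psi_\gl$ is analytic, being the integral over the compact group $K$ of an analytic family; $H$ is left $K$-invariant, so $\psi_\gl(k_0g)=\psi_\gl(g)$, and since $\log H(k)=0$ for $k\in K$ we get $\psi_\gl(1)=\int_K1\,dk=1$; right $K$-invariance of $\psi_\gl$ follows from the substitution $k\mapsto k_0^{-1}k$ and the right-invariance of the Haar measure on $K$. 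Thus $\psi_\gl$ is an analytic $K$-invariant function on $X$ with $\psi_\gl(1)=1$, and it remains to check that $D\psi_\gl=\gc(D)(\gl)\,\psi_\gl$ for every $D\in\Cd(X)$.

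For this I would introduce the auxiliary function $e_\gl(g):=e^{(\gl-\rho)(\log H(g))}$ on $G$ and use two elementary identities, coming from the behaviour of $H$ under right translation together with the identification of $\fU$ with the algebra of left-invariant differential operators on $G$. First, for $Y\in\fa$ one has $\log H(g\exp tY)=\log H(g)+tY$, hence $Y\cdot e_\gl=(\gl-\rho)(Y)\,e_\gl$, and therefore $p\cdot e_\gl=p(\gl-\rho)\,e_\gl$ for every $p\in S(\fa_c)=\fU(\fa_c)$. Second, for $X\in\fn$ one has $\log H(g\exp tX)=\log H(g)$, hence $X\cdot e_\gl=0$; since $\fU$ acts through an algebra homomorphism, the factor of a PBW monomial lying in $\fn$ is applied first, so $w\cdot e_\gl=0$ for every $w\in\fU\fn$. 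Now pick a lift $u\in\fU^{\Ad(K)}$ of $D$ and decompose it, following Harish--Chandra, as $u=\gc'(D)+u_\fk+u_\fn$ with $\gc'(D)\in S(\fa_c)$, $u_\fk\in\fk\fU$, $u_\fn\in\fU\fn$. Since $u$ is $\Ad(K)$-invariant it commutes with right translations by elements of $K$, so differentiating under the integral sign gives
\begin{equation*}
(D\psi_\gl)(g)=\int_{k\in K}(u\cdot e_\gl)(gk)\,dk .
\end{equation*}

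It then remains to evaluate the three contributions to $u\cdot e_\gl$: by the identities above $u_\fn\cdot e_\gl=0$ and $\gc'(D)\cdot e_\gl=\gc'(D)(\gl-\rho)\,e_\gl$. The one step that is not pure bookkeeping is the $\fk\fU$-term. Writing $u_\fk=\sum_iX_iv_i$ with $X_i\in\fk$ and $v_i\in\fU$, the operator $X_i$ is applied outermost, so $(u_\fk\cdot e_\gl)(gk)=\sum_i\left.\tfrac{d}{ds}\right|_{s=0}(v_i\cdot e_\gl)\big(gk\exp(sX_i)\big)$; since $\exp(sX_i)\in K$, the substitution $k\mapsto k\exp(-sX_i)$ and the right-invariance of the Haar measure show that $\int_{k\in K}(v_i\cdot e_\gl)\big(gk\exp(sX_i)\big)\,dk$ is independent of $s$, so after integrating in $k$ this term contributes $0$. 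Collecting the pieces,
\begin{equation*}
(D\psi_\gl)(g)=\gc'(D)(\gl-\rho)\int_{k\in K}e_\gl(gk)\,dk=\gc'(D)(\gl-\rho)\,\psi_\gl(g)=\gc(D)(\gl)\,\psi_\gl(g),
\end{equation*}
the last equality being the definition of $\gc$. Hence $\psi_\gl$ satisfies conditions (i) and (ii) of the Corollary and therefore equals $\varphi_\gl$. I expect the main obstacle to be precisely the $\fk\fU$-term: one must keep careful track of which elementary operator in the decomposition $\fU=S(\fa_c)\oplus\fk\fU\oplus\fU\fn$ is applied outermost, since it is the $\fk$-differentiation being outermost that makes the $\fk\fU$-part vanish after right $K$-averaging, just as it is the $\fn$-differentiation being innermost that annihilates $e_\gl$; the interchanges of $u$, respectively of $\tfrac{d}{ds}$, with $\int_{k\in K}\cdot\,dk$ are then routine, using compactness of $K$ and smoothness of $e_\gl$.
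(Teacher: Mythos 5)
Your proof is correct and follows exactly the route the paper intends (the paper only remarks that the formula ``is not hard to see'' from the Corollary and the definition of $\gc$): you verify that the integral defines an analytic $K$-invariant function with value $1$ at the origin and compute its $\Cd(X)$-eigenvalues via the decomposition $\fU=S(\fa_c)\oplus\fk\fU\oplus\fU\fn$, with the $\fU\fn$-part killed pointwise by $e_\gl$ and the $\fk\fU$-part killed by right $K$-averaging, then invoke the uniqueness statement. This is the standard Harish--Chandra/Helgason argument, and all the delicate points (which factor acts outermost, commuting $u$ past right translations by $K$ using $\Ad(K)$-invariance) are handled correctly.
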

One shows easily that (see \cite[Ch IV.4, Lemma 4.4, formula (7)]{Helgason}) that $\varphi_\gl(g^{-1})=\varphi_{-\gl}(g)$. This leads to 
the following alternative integral formula for $\varphi_\gl$. By abuse of notation, we also use $A$ to denote the projection onto the $A$-component relative 
to the decomposition $G=NAK$. Then:
\begin{equation}\label{eq:phiHel} 
\varphi_\gl(g)=\int_{K}e^{(\gl+\rho)(\textup{log}(A(kg)))}dk
\end{equation} 
This also gives an alternative description of $\gc$, by computing the eigenvalue of $\varphi_\gl$ in two different ways using (\ref{eq:phiHC}) and (\ref{eq:phiHel}):
\begin{cor}\label{cor:nak} Let $D\in \bbd(X)$. 
\begin{itemize}
\item[(i)] Define
\[\gc\nak(D):=\gt^-(\gc''(D))\in S(\fa_\bbc)\]
where $\gc''(D)\in S(\fa_\bbc)$ is the component of 
$D$ in $S(\fa_\bbc)$ relative to the decomposition 
$\fU=S(\fa_\bbc)\oplus \fn\fU\oplus \fU\fk$, and $\gt^-$ denotes the translation over $-\rho$. Then, $\gc\nak(D)=\gc(D)$
for all $D\in \bbd(X)$.
\item[(ii)] We have $\gc(D^t)=(\gc(D))^t$, where $t$ refers to the formal transpose relative to $dg$ on the left hand side, and 
relative to $da$ on the right hand side.
\end{itemize}
\end{cor}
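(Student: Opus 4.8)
\emph{Proof plan.}\ The idea is to compute the scalar by which $D$ acts on a zonal spherical function $\varphi_\gl$ in two different ways. One way is already available: by the preceding Corollary $D\varphi_\gl=\gc(D)(\gl)\varphi_\gl$, and this is exactly what the $KAN$-integral formula (\ref{eq:phiHC}) encodes. The plan is to redo the computation starting instead from the $NAK$-integral formula (\ref{eq:phiHel}); it will produce the scalar $\gc''(\tilde D)(\gl+\rho)=\gc^{nak}(D)(\gl)$, and since $\varphi_\gl\neq 0$ and both scalars are polynomial in $\gl$, part (i) follows. Throughout I lift $D$ to a representative $\tilde D\in\fU^{\Ad(K)}$ by means of $\Cd(X)\simeq\fU^{\Ad(K)}/(\fU^{\Ad(K)}\cap\fU\fk)$ and let $\tilde D$ act on functions on $G$ through the right regular representation $R$; then $R(\tilde D)$ commutes with all left translations, commutes with right translations by $K$ (because $\tilde D$ is $\Ad(K)$-invariant), and induces $D$ on $X=G/K$.

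For (i), put $E_\gl(g):=e^{(\gl+\rho)(\log A(g))}$, the integrand of (\ref{eq:phiHel}), where $g=n\,A(g)\,k$ in the decomposition $G=NAK$. First I would record the three elementary transformation properties of $E_\gl$: it is left-$N$-invariant, right-$K$-invariant, and $E_\gl(a'g)=e^{(\gl+\rho)(\log a')}E_\gl(g)$ for $a'\in A$; since the factorization $g=n\,A(g)\,k$ in $G=NAK$ is unique, $E_\gl$ is then the unique function with these three properties and value $1$ at $e$. As $R(\tilde D)$ commutes with left translations and with right $K$-translations, $R(\tilde D)E_\gl$ has the same three covariances, hence $R(\tilde D)E_\gl=c_\gl\,E_\gl$ with $c_\gl=\bigl(R(\tilde D)E_\gl\bigr)(e)$. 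To evaluate $c_\gl$ I would decompose $\tilde D=\gc''(\tilde D)+u_1+u_2$ along $\fU=S(\fa_c)\oplus\fn\fU\oplus\fU\fk$: the piece $u_2\in\fU\fk$ is killed by $R(\cdot)$ because $E_\gl$ is right-$K$-invariant; for the piece $u_1=\sum_j X_j v_j\in\fn\fU$ with $X_j\in\fn$, each $R(v_j)E_\gl$ is again left-$N$-invariant, hence constant along $\exp\fn$, so $\bigl(R(X_j v_j)E_\gl\bigr)(e)=0$; and $R(\gc''(\tilde D))E_\gl$ evaluated at $e$ reduces, upon restricting $E_\gl$ to $A$ where it equals $a\mapsto e^{(\gl+\rho)(\log a)}$, to $\gc''(\tilde D)(\gl+\rho)$. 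Thus $c_\gl=\gc''(\tilde D)(\gl+\rho)$. Differentiating under the integral over $K$ in (\ref{eq:phiHel}) and using once more that $R(\tilde D)$ commutes with left translations gives $D\varphi_\gl=c_\gl\,\varphi_\gl$, and comparing with $D\varphi_\gl=\gc(D)(\gl)\varphi_\gl$ yields $\gc(D)(\gl)=\gc''(\tilde D)(\gl+\rho)=\gc^{nak}(D)(\gl)$ for all $\gl\in\fa_c^*$; this is (i).

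For (ii), I would use the identity $D^t={}^\theta D$ in $\Cd(X)$ recorded above together with the eigenvalue of ${}^\theta D$ on $\varphi_\gl$. Lifting ${}^\theta D$ to $\theta(\tilde D)$ and using $\theta(\exp X)=\exp(\theta X)$, one checks $R(\theta(\tilde D))=\Theta\circ R(\tilde D)\circ\Theta$, where $(\Theta f)(g)=f(\theta g)$. Since a zonal spherical function satisfies $f(\theta g)=f(g^{-1})$ and $\varphi_\gl(g^{-1})=\varphi_{-\gl}(g)$, one has $\Theta\varphi_\gl=\varphi_{-\gl}$, whence ${}^\theta D\,\varphi_\gl=\Theta\bigl(R(\tilde D)\varphi_{-\gl}\bigr)=\Theta\bigl(\gc(D)(-\gl)\,\varphi_{-\gl}\bigr)=\gc(D)(-\gl)\,\varphi_\gl$, so $\gc(D^t)(\gl)=\gc(D)(-\gl)$. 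As the formal transpose on $S(\fa_c)$ relative to the translation-invariant measure $da$ sends $\partial_H\mapsto-\partial_H$, it is exactly the substitution $p(\gl)\mapsto p(-\gl)$, and (ii) follows. (Staying closer to the ``two integral formulas'': the principal anti-automorphism $u\mapsto u^\vee$ of $\fU$ swaps $\fk\fU\leftrightarrow\fU\fk$ and $\fU\fn\leftrightarrow\fn\fU$ and fixes $S(\fa_c)$ by $p\mapsto p^\vee$, while on the unimodular group $G$ the $dg$-transpose of $R(\tilde D)$ is $R(\tilde D^\vee)$; so $\gc''(\tilde D^\vee)=\gc'(\tilde D)^\vee$, and combining with (i) gives $\gc(D^t)(\gl)=\gc''(\tilde D^\vee)(\gl+\rho)=\gc'(\tilde D)(-\gl-\rho)=\gc(D)(-\gl)$.)

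The delicate point is the handedness clash in (i): the integrand $E_\gl$ of (\ref{eq:phiHel}) is adapted to the action of $N$ and $A$ on the \emph{left}, whereas $R(\tilde D)$ is a right-invariant differential operator, so the computation underlying (\ref{eq:phiHC}) cannot be transcribed verbatim. This is circumvented by the two observations above: the covariances of $E_\gl$ that $R(\tilde D)$ \emph{does} respect ($NA$ on the left, $K$ on the right) already force $R(\tilde D)E_\gl$ to be a scalar multiple of $E_\gl$; and the $\fn\fU$-component of $\tilde D$, while it does not kill $E_\gl$ globally, kills it \emph{at the identity} because $R(v)E_\gl$ remains left-$N$-invariant. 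Everything else — the $A$-derivative producing the $+\rho$ shift, differentiation under $\int_K$, and the uniqueness of a function with the prescribed left-$NA$/right-$K$ covariance — is routine.
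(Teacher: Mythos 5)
Your argument is correct and is exactly the route the paper intends: it carries out the one-line sketch preceding the statement, namely computing the eigenvalue of $\varphi_\gl$ once via (\ref{eq:phiHC}) and once via (\ref{eq:phiHel}), with the covariance argument forcing $R(\tilde D)E_\gl=c_\gl E_\gl$ and the evaluation at $e$ giving $c_\gl=\gc''(\tilde D)(\gl+\rho)$. Part (ii) likewise uses the paper's own ingredients ($D^t={}^\theta D$, $\varphi_\gl(g^{-1})=\varphi_{-\gl}(g)$, and $\partial_H^t=-\partial_H$ on $A$), so there is nothing to add.
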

\subsection{The Abel transform}
The Abel transform of a function $f\in C_c^\infty(X)^K$ is a slight variation of the orbital integral of
$f$ relative to $H$ in this context. The difference is in the domain (the Abel transform is only defined
on the split component $A$ of $H$), and the normalisation factor (factors of roots which vanish on 
$\fa$ are omitted). Harish--Chandra defined \cite{HC3} for $a\in A$: 
\begin{equation}
F_f(a)=|\Delta_+(a)|\int_{G/H}f(x^*a(x^*)^{-1})dx^* 
\end{equation}
He shows that this can be rewritten as 
\begin{equation}\label{eq:abel}
F_f(a)=a^{\rho}\int_{N}f(an)dn
\end{equation}
where $dn$ is the Haar measure of $N$ such that $\int_{n\in N}e^{-2\rho{\textup{log}(H(\theta(n))}}dn=1$.

The Abel transform is a powerful tool, also in this context of the spherical Plancherel formula. At this point 
it is easy to show that: 
\begin{proposition}\label{prop:ImAb}
Let $f\in C_c^\infty(G/\!\!/K)$. We have:
\begin{itemize}
\item[(i)] (\cite{HC3}) $F_f\in C_c^\infty(A)^W$, and the map
\[F_f:C^\infty_c(G/\!\!/K)\to C_c^\infty(A)^W\]
is continuous. 
\item[(ii)] (\cite[Ch II.5(39)]{Helgason})\footnote{To prove this result, use Corollary \ref{cor:nak}(i).} 
For all $D\in \bbd(X)$ one has ($a\in A$): 
\begin{equation}
F_{Df}(a)=\gamma(D)F_f(a).
\end{equation}
\end{itemize}
\end{proposition}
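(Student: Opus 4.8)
The plan is to establish both parts of Proposition~\ref{prop:ImAb} from the integral formula \eqref{eq:abel} together with the two presentations of Harish--Chandra's homomorphism in Theorem~\ref{thm:HChom} and Corollary~\ref{cor:nak}. For part (i), I would start from $F_f(a)=a^\rho\int_N f(an)\,dn$ for $f\in C_c^\infty(G/\!\!/K)$. Smoothness of $F_f$ on $A$ follows by differentiating under the integral sign, and the statement that the support is compact is a consequence of the fact that, because $f$ is supported in a set of the form $K\exp(C)K$ with $C\subset\fp$ compact, the element $an$ can only lie in $\supp f$ for $\log a$ in a bounded subset of $\fa$ (here one uses that the $A$-part of the $KAK$ decomposition of $an$ dominates $a$ in the appropriate sense, a standard convexity estimate). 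Continuity of $f\mapsto F_f$ between the relevant Fr\'echet spaces is then routine from these same estimates. The $W$-invariance is the one genuinely non-formal point: since $F_f$ is (up to the known normalisation) the orbital integral of a bi-$K$-invariant function, and the little Weyl group $W=N_K(\fa)/C_K(\fa)$ is realised by conjugation by elements of $K$ which preserve both $\fa$ and the bi-$K$-invariance of $f$, one gets $F_f(wa)=F_f(a)$ directly from the conjugation-invariance of the orbital integral; alternatively one invokes Harish--Chandra's original argument in \cite{HC3}.

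For part (ii), the key identity to prove is that for $D\in\Cd(X)$ one has $F_{Df}=\gamma(D)F_f$ on $A$. Following the hint, I would use the Naka\-yama-type presentation $\gamma^{nak}(D)=\gamma(D)$ from Corollary~\ref{cor:nak}(i), i.e. the decomposition $\fU=S(\fa_c)\oplus\fn\fU\oplus\fU\fk$. The strategy is to lift $D$ to a representative $u\in\fU^{\Ad(K)}$ and write $u=p+v+w$ with $p\in S(\fa_c)$, $v\in\fn\fU$, $w\in\fU\fk$. Applying $u$ to $f$ and then integrating over $N$ as in \eqref{eq:abel}: the term $w\in\fU\fk$ contributes nothing because $f$ is right-$K$-invariant, so $wf=0$; the term $v\in\fn\fU$ contributes nothing after integration over $N$ because $\int_N (X\cdot g)(an)\,dn=0$ for $X\in\fn$ (the integrand is the derivative along $N$ of a compactly supported function, using left-invariance of $dn$ and that $X$ acts on the left by the $\fn$-direction — here one must be careful about whether $\fn$ acts on the left or right, and this is exactly why one needs the $NAK$ ordering rather than $KAN$); and the term $p\in S(\fa_c)$ acts as a constant-coefficient operator in the $A$-variable, producing, after accounting for the shift $a^\rho$, precisely the operator $\gamma^{nak}(D)=\gamma(D)$ acting on $F_f$. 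Assembling these three observations gives the claimed transmutation formula.

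The main obstacle I anticipate is bookkeeping with left versus right actions and the $\rho$-shift: the factor $a^\rho$ in \eqref{eq:abel} conjugates the translation action of $\fa$ on functions on $A$, which is exactly the source of the $\gamma^-$ shift by $-\rho$ in the definition of $\gamma^{nak}$ in Corollary~\ref{cor:nak}(i), and getting the signs and sides consistent requires care. Concretely, for $p\in S(\fa_c)$ homogeneous, $\int_N (p\cdot f)(an)\,dn$ should equal $(\text{conjugate of }p\text{ by }a^{-\rho})$ applied to $\int_N f(an)\,dn$, and multiplying back by $a^\rho$ turns this into $\gamma^-(p)$ applied to $F_f$; one then needs $\gamma^-(\gamma''(D))=\gamma^{nak}(D)=\gamma(D)$, which is precisely Corollary~\ref{cor:nak}(i). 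A secondary technical point is justifying the vanishing of the $\fn\fU$-contribution: one wants $\int_N (Y\cdot h)(an)\,dn=0$ for $Y\in\fn$ and $h\in C_c^\infty(G)$, which holds because $t\mapsto h(an\exp(tY))$ integrates to zero against $dn$ by invariance of Haar measure — but this is the \emph{right} regular action of $Y$, so I must make sure the chosen representative $u$ and the decomposition $\fU=S(\fa_c)\oplus\fn\fU\oplus\fU\fk$ are set up so that ``$\fn\fU$'' really means ``$Y$ acts first on the left in the position adjacent to a factor that kills the integral'', which is the whole reason Corollary~\ref{cor:nak} with the $NAK$-ordering is invoked rather than Theorem~\ref{thm:HChom} directly. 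Once these orientation issues are pinned down, the rest is a short formal computation.
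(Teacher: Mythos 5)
Your proposal is correct and is essentially the argument the paper intends: the paper itself gives no proof beyond citing \cite{HC3} and \cite{Helgason}, and its footnote points precisely to Corollary \ref{cor:nak}(i), i.e.\ to applying the decomposition $\fU=S(\fa_c)\oplus\fn\fU\oplus\fU\fk$ to \eqref{eq:abel}, killing the $\fU\fk$-term by right $K$-invariance of $f$ and the $\fn\fU$-term by invariance of $dn$, exactly as you do. One bookkeeping point to fix when writing it out: commuting $\exp(tH)$ past $n$ in $\int_N f(an\exp(tH))\,dn$ produces the Jacobian $\det\bigl(\Ad(\exp tH)|_{\fn}\bigr)=e^{2t\rho(H)}$, so the effective conjugation is by $a^{2\rho}$ rather than by $a^{\rho}$; combined with the prefactor $a^{\rho}$ in \eqref{eq:abel} this turns $\partial(p)$ into $p(\cdot+\rho)$ acting on $F_f$, which is exactly $\gamma^{nak}(D)=\gamma(D)$ as required.
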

We again see that $F_f$ connects the harmonic analysis on $X$ and the classical harmonic analysis 
on the abelian group $A$. The following result makes this precise. Let $dg$ denote the Haar measure on $G$ 
defined by 
\begin{equation}
\int_{G}f(g)dg=\int_{N}\int_{A}\int_{K}f(kan)e^{-2\rho(\textup{log}(a))}dkdadn
\end{equation}
If $f\in C_c^\infty(G/\!\!/K)$ and $\gl\in\fa_\bbc^*$ then we define its spherical Fourier transform $\Cf(f)^G_{sph}(\gl)$ by:
\begin{equation}
\Cf^G_{sph}(f)(\gl):=\Cf^{\varphi_\gl}_{sph}(f)=\int_G f(g)\varphi_\gl(g^{-1})dg
\end{equation}
If $g\in C_c^\infty(A)^W$ then we define the abelian Fourier transform of $g$ by:
\begin{equation}
\Cf^A(g)(\gl):=\int_A g(a)e^{-\gl(\textup{log}(a))}da
\end{equation}
The following result is straightforward from the definitions. 
\begin{proposition}\label{prop:transmut}(\cite{HC3})
For $f\in C_c^\infty(G/\!\!/K)$ and $\gl\in \fa_\bbc^*$ we have:
\begin{equation} 
\Cf^G_{sph}(f)(\gl):=\int_{G}\varphi_\gl(g^{-1})f(g)dg=\int_{A}e^{-\gl(\textup{log}(a))}F_f(a)da=\Cf^A(F_f)(\gl)
\end{equation}
\end{proposition}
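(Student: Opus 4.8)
The plan is to unwind the definition of the spherical Fourier transform, substitute one of Harish--Chandra's integral formulas for the zonal spherical function $\varphi_\gl$, and then use the bi-$K$-invariance of $f$ together with the Iwasawa decomposition of the Haar measure to collapse the integral over $G$ into an iterated integral over $A$ and $N$; what is left inside the integral over $N$ is then, up to the weight $a^{\rho}$, precisely the integral appearing in the definition \eqref{eq:abel} of the Abel transform, so the identity drops out. This is essentially the classical ``transmutation'' between spherical analysis on $X$ and Euclidean Fourier analysis on $A$.

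Concretely, I would start from $\Cf^G_{sph}(f)(\gl)=\int_{G}f(g)\varphi_\gl(g^{-1})\,dg$ and substitute \eqref{eq:phiHel}, evaluated at $g^{-1}$: $\varphi_\gl(g^{-1})=\int_{K}e^{(\gl+\rho)(\log A(kg^{-1}))}\,dk$. After interchanging the two integrations, the substitution $g\mapsto gk$ in the integral over $G$ leaves $f(g)$ unchanged, by right $K$-invariance, and turns $A(kg^{-1})$ into $A(g^{-1})$; hence the inner integral is independent of $k$ and the integration over $K$ drops out, leaving $\int_{G}f(g)e^{(\gl+\rho)(\log A(g^{-1}))}\,dg$. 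Next, parametrise $g$ by its Iwasawa decomposition $g=kan$ relative to $G=KAN$. Then $g^{-1}=n^{-1}a^{-1}k^{-1}$ is exactly the $NAK$-decomposition of $g^{-1}$, so $A(g^{-1})=a^{-1}$; moreover $f(g)=f(an)$ by left $K$-invariance, and $dg$ contributes the Iwasawa weight $a^{2\rho}$. Combining the factor $e^{-(\gl+\rho)(\log a)}$ coming from $A(g^{-1})$ with this weight leaves exactly $e^{-\gl(\log a)}\,a^{\rho}$, so
\[
\Cf^G_{sph}(f)(\gl)=\int_{A}e^{-\gl(\log a)}\,a^{\rho}\left(\int_{N}f(an)\,dn\right)da=\int_{A}e^{-\gl(\log a)}F_f(a)\,da=\Cf^A(F_f)(\gl),
\]
the middle step being \eqref{eq:abel} and the last the definition of $\Cf^A$. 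One can run the same computation starting from \eqref{eq:phiHC} instead, substituting $g\mapsto kg$ and parametrising $g$ by its $NAK$-decomposition; that route produces $\int_{N}f(na)\,dn$ and uses in addition the elementary fact that conjugation by $a$ rescales the Haar measure of $N$ by $a^{2\rho}$.

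The argument is purely formal, so the only point needing attention is the bookkeeping of the various appearances of $\rho$: it enters through the integral formula \eqref{eq:phiHel}, through the Iwasawa weight of $dg$, through the normalising factor $a^{\rho}$ in \eqref{eq:abel}, and implicitly through the definition of $F_f$, and one must check that these combine to a net shift of zero; in particular one should make sure the Haar measure $dn$ on $N$ implicit in $dg$ is taken in the same normalisation, $\int_{N}e^{-2\rho(\log H(\theta(n)))}\,dn=1$, as the one in \eqref{eq:abel}. There are no analytic difficulties: $f\in C^\infty_c(G/\!\!/K)$ is compactly supported, all integrands are continuous, and $F_f\in C^\infty_c(A)^W$ by Proposition \ref{prop:ImAb}(i), so every interchange of integration and change of variable above is legitimate.
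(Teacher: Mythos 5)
Your argument is correct and is precisely the ``straightforward from the definitions'' computation the paper intends: substitute the integral formula \eqref{eq:phiHel} (or \eqref{eq:phiHC}) for $\varphi_\lambda(g^{-1})$, use the bi-$K$-invariance of $f$ to drop the $K$-integration, and decompose $dg$ along the Iwasawa decomposition so that the various $\rho$-shifts cancel against the factor $a^{\rho}$ in \eqref{eq:abel}, yielding $\Cf^A(F_f)(\lambda)$. The only point to note is that the Iwasawa weight $a^{2\rho}$ you use for $g=kan$ (with $dn$ normalized as in \eqref{eq:abel}) is indeed the one required for the identity, whereas the paper's displayed normalization of $dg$ carries $e^{-2\rho(\log a)}$, which appears to be a sign slip there rather than an error in your bookkeeping.
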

\begin{cor}
We have $\varphi_\gl=\varphi_{w(\gl)}$ for all $w\in W$. 
\end{cor}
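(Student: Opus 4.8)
The plan is to deduce $\varphi_\gl=\varphi_{w(\gl)}$ from two facts already established: the characterisation of an elementary spherical function as the unique analytic $K$-invariant solution of a prescribed system of eigenvalue equations, and the crucial feature of Harish--Chandra's homomorphism that its image lies in the $W$-invariants $S(\fa_c)^W$.

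First I would record the key observation: for every $D\in\Cd(X)$ the element $\gc(D)\in S(\fa_c)^W$ is, as a polynomial function on $\fa_c^{*}$, $W$-invariant, so $\gc(D)(\gl)=\gc(D)(w(\gl))$ for all $w\in W$ and all $\gl\in\fa_c^{*}$. Now fix $\gl$ and let $\varphi_\gl$ be the elementary spherical function attached to $\gl$, characterised (by the Corollary above) as the unique analytic $K$-invariant function with (i) $\varphi_\gl(1)=1$ and (ii) $D\varphi_\gl=\gc(D)(\gl)\varphi_\gl$ for all $D\in\Cd(X)$. Applying this with $\gl$ replaced by $w(\gl)$, the function $\varphi_{w(\gl)}$ satisfies $\varphi_{w(\gl)}(1)=1$ and $D\varphi_{w(\gl)}=\gc(D)(w(\gl))\varphi_{w(\gl)}=\gc(D)(\gl)\varphi_{w(\gl)}$ for all $D$, by the $W$-invariance of $\gc(D)$. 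Thus $\varphi_{w(\gl)}$ satisfies exactly conditions (i) and (ii) that single out $\varphi_\gl$, whence $\varphi_{w(\gl)}=\varphi_\gl$ by uniqueness.

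Alternatively, and in the spirit of Proposition~\ref{prop:transmut}, one can argue via Fourier transforms. For $f\in C_c^\infty(G/\!\!/K)$ one has $F_f\in C_c^\infty(A)^W$ by Proposition~\ref{prop:ImAb}(i), and since the Haar measure $da$ on $A$ is $W$-invariant, the substitution $a\mapsto w^{-1}a$ in $\Cf^A(F_f)(w(\gl))=\int_A F_f(a)e^{-w(\gl)(\log a)}da$, combined with $w(\gl)(\log a)=\gl(\log(w^{-1}a))$ and $W$-invariance of $F_f$, gives $\Cf^A(F_f)(w(\gl))=\Cf^A(F_f)(\gl)$; that is, $\gl\mapsto\Cf^A(F_f)(\gl)$ is $W$-invariant. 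By Proposition~\ref{prop:transmut} this yields $\int_G f(g)\varphi_{w(\gl)}(g^{-1})dg=\Cf^G_{sph}(f)(w(\gl))=\Cf^G_{sph}(f)(\gl)=\int_G f(g)\varphi_\gl(g^{-1})dg$ for every $f\in C_c^\infty(G/\!\!/K)$, and one concludes $\varphi_{w(\gl)}=\varphi_\gl$ by testing against a bi-$K$-invariant approximate identity concentrated near a given point, using continuity of the two spherical functions.

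I do not expect a genuine obstacle here. The first route is essentially immediate once one notes that $\gc$ lands in $S(\fa_c)^W$; the only thing to keep honest is that the uniqueness clause of the Corollary should be read as uniqueness of the solution of (i)–(ii) for a \emph{fixed} eigenvalue assignment, so that invoking it for $w(\gl)$ is not circular. In the second route the only point worth writing out carefully is the change-of-variables computation showing $W$-invariance of $\gl\mapsto\Cf^A(F_f)(\gl)$, which rests on $W$-invariance of both $F_f$ and $da$, together with the standard fact that the pairing $f\mapsto\int_G f(g)h(g^{-1})dg$ on $C_c^\infty(G/\!\!/K)$ detects a continuous bi-$K$-invariant function $h$.
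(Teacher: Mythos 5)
Both of your arguments are correct. The paper offers no written proof, but the placement of the corollary immediately after Proposition~\ref{prop:transmut} indicates that the intended derivation is exactly your second route: $F_f\in C_c^\infty(A)^W$ by Proposition~\ref{prop:ImAb}(i), so $\gl\mapsto\Cf^A(F_f)(\gl)$ is $W$-invariant by the change of variables you describe, and then $\Cf^G_{sph}(f)(w(\gl))=\Cf^G_{sph}(f)(\gl)$ for all spherical test functions $f$ forces $\varphi_{w(\gl)}=\varphi_\gl$. Your first route is genuinely different and arguably more economical: it bypasses the Abel transform entirely and uses only Theorem~\ref{thm:HChom} (the image of $\gc$ lies in $S(\fa_c)^W$, so $\gc(D)(w(\gl))=\gc(D)(\gl)$) together with the uniqueness clause in the characterisation of $\varphi_\gl$ as the normalised analytic $K$-invariant joint eigenfunction; your caveat about reading that uniqueness for a fixed eigenvalue assignment is the right point to keep honest, and it is not circular. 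What the Abel-transform route buys, in the spirit of the paper, is that it exhibits the $W$-invariance of the spherical transform as a direct consequence of the transmutation property of $F_f$, which is the theme the section is developing; what your eigenfunction route buys is independence from the integration theory (Propositions~\ref{prop:ImAb} and~\ref{prop:transmut}), needing only the algebraic statement of Harish--Chandra's isomorphism.
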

\subsection{The radial part of an invariant operator}
In order to study the spherical Fourier transform, Harish--Chandra studied the asymptotic behaviour
of $\varphi_{i\gl}$ on $G$ in \cite{HC3}. 
The Cartan decomposition $G=KAK$ and (\ref{eq:W}) makes it clear that a spherical function $\varphi$ 
on $G$ is determined by its restriction $\textup{Res}(\varphi_{i\gl})$ to $A$, and that its restriction is $W$--invariant. 
Therefore it is enough to study the asymptotic behaviour of $\varphi_{i\lambda}$ on $\overline{A_+}$, where 
\begin{equation}\label{eq:Aplus}
A_+:=\{a\in A\mid |\ga(a)|>1\textrm{\ for\ all\ positive\ roots\ }\ga\} 
\end{equation}
This is studied by a careful analysis of the differential equations which $\textup{Res}(\varphi_\gl)$ satisfy on $A$. 

Let $o=eK\in X$ denote the base point of $X=G/K$. 
Recall the $K$-{\it radial part} $\textup{Rad}(D)$ of an element $D\in \bbd(X)$, the radial part of $D$ 
when expressed in "polar coordinates" on $X$ relative to the action of $K$ on $X$, with $A.o\subset X$ 
as transversal set.
Then $\textup{Rad}(D)$ is a $W$--invariant linear partial differential operator 
on $A$ which has rational coefficients in the coordinates $x_i=e^{-\ga_i}$, with an asymptotic expansion of the form 
\begin{equation}\label{eq:hot}
\textup{Rad}(D)=\partial(\gc'(D))+\sum_{\mu\in Q:\mu<0}e^{\mu}\partial(p^D_\mu)
\end{equation}
where $\gc'(D)$ is defined in Theorem \ref{thm:HChom}, and $p^D_\mu\in S(\fa_\bbc)$ has degree 
less than $\gc'(D)$. 
The radial part map 
\begin{equation}
\textup{Rad}:\bbd(X)\to \textup{Rad}(\bbd(X))
\end{equation}
is an injective homomorphism of algebras. As an important explicit example let $L_X,L_A$ be the Laplace
operators of $X,A$ respectively, and recall that: 
\begin{equation}\label{eq:RadLX}
\textup{Rad}(L_X)=L_A+\sum_{\ga\in R_+}k_\ga \textup{coth}(\ga/2)\partial_{\ga^*}
\end{equation}
where $ R=2\Phi$, and where for each $\ga\in  R$, we define $k_\ga=\frac{1}{2}m_{\ga/2}$. 
(Here $\ga^*\in\fa$ is such that for all $\gb\in \fa^*$ we have $\gb(\ga^*)=(\ga,\gb)$; recall that 
$\gc(L_X)(\gl)=(\gl,\gl)-(\rho,\rho)$.)
Then the restriction $\textup{Res}({\varphi_\gl})$ of $\varphi_\gl$ to $A$ satisfies the system of equations (with $D\in \bbd(X)$): 
\begin{equation}\label{eq:spheq}
\textup{Rad}(D)(\textup{Res}({\varphi_\gl}))=\gc(D)(\gl)\textup{Res}({\varphi_\gl})
\end{equation}
\subsection{Asymptotic behaviour; Harish--Chandra's $c$--function}
Harish--Chandra deduced from the surjectivity of his homomorphism $\gamma$ and (\ref{eq:hot}) 
that the dimension of the local solution spaces of (\ref{eq:spheq}) on $A'$ is bounded by $|W|$. 
For $\gl$ generic, using the explicit form (\ref{eq:RadLX}) of $\textup{Rad}(L_X)$, he constructed a 
basis of solutions $\{\Phi_{w\gl}\}_{w\in W}$ of the system (\ref{eq:spheq}) on $A_+.o$ \cite[Section 8]{HC3}, where $\Phi_{w\gl}$ is 
a series of the form (with $a\in A_+$): 
\begin{equation}
\Phi_{w\gl}(a.o) =a^{w\gl-\rho}\sum_{\mu\in Q_-}\Gamma_\mu(\gl)a^{\mu}
\end{equation}
with $\Gamma_0(\gl)=1$. The series $\Phi_{w\gl}$ is easily shown to be uniformly convergent on all subsets of the form 
$\overline{A_+}.b$ with $b\in A_+$. In particular $\Phi_{w\gl}(a)\sim e^{w\gl-\rho}(\textup{log}(a))$ when $a\to \infty$ in the 
cone $A_+.o$. 

 Consequently, the spherical function $\varphi_\gl$ admits a unique expansion on $A_+.o$ of the form:  
\begin{equation}\label{eq:exp_phi}
\varphi_\gl(a.o) = \sum_{w\in W}c(w\gl)\Phi_{w\gl}(a.o)
\end{equation}
The important function $c(\gl)$ (known as {\it Harish-Chandra's $c$--function}) is uniquely determined by (\ref{eq:exp_phi}),  
and is meromorphic on $\fa^*_c$.

From general principles, Harish--Chandra expected from this asymptotic behaviour of $\varphi_\gl$ that the Plancherel measure for the 
spherical Fourier transform would be of the form (with $\gl\in\fa^*$):
\begin{equation}
d\mu_{Pl}^{sph}(i\gl):=\frac{1}{|W||c(i\gl)|^2}d\gl, 
\end{equation}
where $i\gl\in i\fa^*$, and where $d\gl$ defines the measure on $i\fa^*$ which is Fourier dual to the measure $da$ on the vector group $A$.  
Equivalently, for spherical functions $f\in L^2(X,dg)^K$ which are suitably well behaved, he expected that:
\begin{equation}\label{eq:Plan}
|W|f(o)=\int_{\gl\in \fa^*}\Cf^G_{sph}(f)(i\gl)|c(i\gl)|^{-2}d\gl
\end{equation}    

In \cite{HC3}, \cite{HC3II} Harish--Chandra discovers important new notions and subtleties, but he is also confronted with technical obstacles. 
He introduces a very important commutative convolution algebra of smooth spherical functions on $G$ for which (\ref{eq:Plan}) should hold, the spherical part 
$\Cs(G/\!\!/K)$ of the Harish--Chandra Schwartz space of $G$. This is based on an upper and a lower bound for the function $\varphi_0$. The elementary spherical 
functions themselves are novel and interesting objects.
In the complex case the elementary spherical functions $\varphi_\gl$ can be expressed in terms of elementary functions, but the general rank one case showed 
that in general, $\varphi_\gl$ 
is a transcendental function. In the rank one case he found that $\varphi_\gl$ can be expressed as a Gauss hypergeometric function 
${}_2F_1(a,b;c;z)$, with parameters $a,b,c$ depending on the root multiplicities $k_{2\ga}:=\frac{1}{2}m_{\ga}=\frac{1}{2}\textup{dim}(\fg_\ga)$ 
(with $2\ga\in  R=2\Phi$). 
Harish--Chandra \cite{HC3} showed that for $\textup{Re}(\gl)\in\fa^*_+$ and $H\in\fa_+$ (write $a=\textup{exp}(H)\in A_+$): 
\begin{equation}\label{eq:lim}
c(\gl)=\lim_{t\to\infty}e^{(\rho-\gl)(tH)}\varphi_\gl(a^t) = \int_{\overline{N}}e^{-(\gl+\gd)(H(\overline{n}))}d\overline{n}
\end{equation}

In view of (\ref{eq:lim}), the $c$--function is a "connection coefficient" for the system of differential equations (\ref{eq:spheq}).  
Harish--Chandra computed this all important $c$--function for the rank one cases using the theory of hypergeometric functions. 
Also he computed this function in the complex case, in which case it is an elementary function. 
However, a general explicit expression remained elusive at this point, and this would only be resolved later, by the work of 
Gindikin and Karpelevi\v c \cite{GiKaI,GiKaII}. A representation theoretic interpretation of the Harish--Chandra $c$--function 
in terms of intertwining operators between the minimal principal series representations of $G$ was provided by Schiffmann \cite{Sch},  
also see \cite{Knapp}. 

In \cite{HC3II} Harish--Chandra proved the inversion formula (\ref{eq:Plan}) modulo two conjectures. The first conjecture states
that there exists a polynomial upper bound for $|c(i\gl)|^{-2}$ for $\gl\in \fa^*$. This is solved easily by the use of Stirling's Theorem
in combination with the explicit formula for the $c$--function in \cite{GiKaI,GiKaII}.  The second conjecture is the injectivity
of the Abel transform $F_f$ for $f$ in $\Cs(G/\!\!/K)$. This was solved by Harish--Chandra in \cite{HC5}, thereby finally completing
his proof of the spherical Plancherel formula for the spherical Schwartz space $\Cs(G/\!\!/K)$. 

Another approach to the Plancherel formula of $G/K$ was initiated by Ehrenpreis and Mautner \cite{ErMa}, and followed up by Takahashi,  
Helgason, Gangolli and finally Rosenberg \cite{Ros} (see \cite[IV, Notes]{Helgason} for an account of this development and further references). 
This approach is based on a Paley--Wiener Theorem, and simplified the proof of the Plancherel formula for $G/K$, see \cite[Ch. IV]{Helgason}. 
The image of the spherical Fourier transform of the space $C_c^\infty(G/\!\!/K)$ is contained in the classical Paley--Wiener space for the abelian group 
$A$ by Propositions \ref{prop:ImAb} and \ref{prop:transmut}, using the properties of $F_f$. The proof of the converse statement is 
remarkable and direct, following Rosenberg \cite{Ros}.

Finally we arrive at the following results about the "transmutation operator," the Abel transform $F_f$:
\begin{cor}\label{cor:ATCcandS}
$F_f:C^\infty_c(G/\!\!/K)\to C_c^\infty(A)^W$ is an algebra isomorphism. Similarly, if $\Cs(G/\!\!/K)$ denotes
the space of spherical functions of the Harish--Chandra Schwartz space of $G$, and $\Cs(A)^W$ the space
of Weyl group invariant elements of the Schwartz space of $A$, $F_f:\Cs(G/\!\!/K)\to \Cs(A)^W$ is an isomorphism 
of algebras. 
\end{cor}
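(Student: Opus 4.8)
The plan is to reduce the whole statement to the commutative triangle $\Cf^G_{sph} = \Cf^A\circ F_{(\cdot)}$ of Proposition~\ref{prop:transmut}, in which the abelian factor $\Cf^A$ is the ordinary Euclidean Fourier transform of the vector group $A\cong\fa$, whose behaviour is entirely classical: the Paley--Wiener theorem gives an algebra isomorphism $\Cf^A\colon C^\infty_c(A)^W\xrightarrow{\simeq}\mathrm{PW}(i\fa^*)^W$ onto the $W$-invariant Paley--Wiener space, and the Schwartz-space version gives an algebra isomorphism $\Cf^A\colon \Cs(A)^W\xrightarrow{\simeq}\Cs(i\fa^*)^W$, with convolution on $A$ going to pointwise multiplication in both cases. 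So it suffices to prove the corresponding facts for $\Cf^G_{sph}$ and then chase the triangle, inverting $\Cf^A$.

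First I would record that $F$ is an algebra homomorphism. For each fixed $\lambda$ the map $f\mapsto\Cf^G_{sph}(f)(\lambda)$ is, by (\ref{eq:transf}), a character of the commutative convolution algebra $C_c(G\git K)$, so $\Cf^G_{sph}$ sends convolution to pointwise multiplication; composing with the triangle and using that $\Cf^A$ is injective on $C^\infty_c(A)^W$ forces $F_{f_1*f_2}=F_{f_1}*F_{f_2}$, the target being $C^\infty_c(A)^W$ by Proposition~\ref{prop:ImAb}(i). To run the same argument on the Schwartz level I would first need that $F$ maps $\Cs(G\git K)$ \emph{continuously} into $\Cs(A)^W$; this I would obtain from Proposition~\ref{prop:ImAb}(i) together with the Harish--Chandra/Gangolli size estimates for $\varphi_0$ and a density argument, after which Proposition~\ref{prop:transmut} extends from $C_c^\infty$ to $\Cs$ by continuity.

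Injectivity is then short. If $F_f=0$ then $\Cf^G_{sph}(f)=\Cf^A(F_f)=0$; for $f\in C_c^\infty(G\git K)$ the spherical inversion formula (\ref{eq:Plan}), in the form recovering $f$ itself from $\Cf^G_{sph}(f)$, forces $f=0$, while for $f\in\Cs(G\git K)$ the injectivity of $F$ is precisely Harish--Chandra's theorem from \cite{HC5}, which I would simply quote. For surjectivity in the $C_c^\infty$ case I would invoke Rosenberg's Paley--Wiener theorem \cite{Ros}, which identifies $\Cf^G_{sph}(C_c^\infty(G\git K))$ with $\mathrm{PW}(i\fa^*)^W$: given $g\in C^\infty_c(A)^W$, the entire function $\Cf^A(g)$ lies in this space, hence equals $\Cf^G_{sph}(f)=\Cf^A(F_f)$ for some $f$, and injectivity of $\Cf^A$ yields $F_f=g$. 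In the Schwartz case I would use the Schwartz-space form of the spherical Plancherel theorem, namely that $\Cf^G_{sph}$ is a topological isomorphism of $\Cs(G\git K)$ onto $\Cs(i\fa^*)^W$ (a consequence of the $c$-function estimates of \cite{GiKaI}, \cite{GiKaII} and the injectivity of \cite{HC5}), together with the same diagram chase using the classical Schwartz isomorphism for $\Cf^A$.

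The hard part is not the bookkeeping above but the two deep analytic inputs it rests on: Rosenberg's Paley--Wiener theorem for the compactly supported case, and, for the Schwartz case, Harish--Chandra's injectivity of the Abel transform on $\Cs(G\git K)$ together with the resulting characterisation of the image of $\Cf^G_{sph}$. A smaller but genuinely necessary point is the continuity of $F\colon\Cs(G\git K)\to\Cs(A)^W$, needed to propagate Propositions~\ref{prop:ImAb} and~\ref{prop:transmut} from compactly supported functions up to the Schwartz space.
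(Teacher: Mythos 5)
Your proposal is correct and follows essentially the same route as the paper, which states Corollary \ref{cor:ATCcandS} as the outcome of exactly the chain you assemble: the factorisation $\Cf^G_{sph}=\Cf^A\circ F$ from Propositions \ref{prop:ImAb} and \ref{prop:transmut}, Rosenberg's Paley--Wiener theorem \cite{Ros} for the $C_c^\infty$ case, and Harish--Chandra's injectivity of the Abel transform \cite{HC5} together with the spherical Plancherel theorem (with the $c$-function estimates of \cite{GiKaI}, \cite{GiKaII}) for the Schwartz case. You also correctly identify the same deep external inputs the paper relies on, so there is nothing to add.
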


\section{Satake's isomorphism and its inversion}
\label{sec:SI}

We will make an excursion into the study of spherical Harmonic analysis on reductive groups of $p$--adic
type. There are remarkable analogies, but importantly, also some lessons to be learnt. The Iwahori--Hecke
algebra makes its presence felt, even when studying $K$--spherical functions for a maximal compact subgroup
$K$. In the real case this is in some sense also true, but it is much more hidden. 

\subsection{The spherical Hecke algebra of a $p$--adic group}

Let $\sfk$ be a non--Archimedean local field, and let $G$ be the group of $\sfk$-points of a reductive group
$\underline{G}$ defined over $\sfk$. Assume that $\underline{G}$ is not anisotropic over $\sfk$, and let $A
\subset G$ be (the group of $\sfk$--points of) a maximal split $\sfk$-torus. Let $\Cb$ be the Bruhat--Tits
building of $G$, and $\Ca\subset\Cb$ the apartment corresponding to $A$. Fix a special point $x_0\in\Ca$,
and let $R$ the corresponding finite root system. We choose a set $R_+$ of positive roots, with corresponding
positive Weyl chamber $\Cc$, and let $\Cc_0\subset \Cc$ be the alcove in $\Cc$ containing $x_0$. There is
a canonical affine root $ R^{a}$ system associated with $\Ca$, with affine Weyl group $W^a$ and extended
affine Weyl group $W^{e}=W^a\rtimes \Omega$ acting on $\Ca$, with $\Omega=\{w\in W^{e}\mid w(\Cc_0)
=\Cc_0\}$.

Put $K=G_{x_0}$. The lattice $L=A/(A\cap K)$ comes equipped with a natural $W$--action, and acts on $\Ca$
by translations. We have a splitting $L\to A$ which we will denote by $L\ni\lambda\to \pi^\lambda\in A$. It is
well--known that $\Omega$ is an abelian group, isomorphic to the quotient of $L$ by the coroot lattice $\bbz
R^\vee$.

Let $L_+\subset L$ be the monoid of positive translations relative to the chamber $\Cc$. We have the Cartan decomposition
$G=KL_+K$. The convolution algebra $C_c^\infty(G/\!\!/K)$ of compactly supported, $K$--biinvariant locally constant functions
on $G$ is called the {\it spherical Hecke algebra} of $(G,K)$ and denoted by $\mathbf{H}(G,K)$. Its unit element $e_K$ is
the characteristic function of the double coset $KeK$. One can show - in a similar fashion to the real case - that $\bH(G,K)$
is a commutative algebra. 

Analogously to the real case, we say that an element $\go\in C^\infty(G/\!\!/K)$ is an elementary (or zonal) spherical
function $K$--spherical function if $\go$ is a joint eigenfunction of $\mathbf{H}(G,K)$. As explained in Section \ref{ss:D spherical},
in the real case zonal spherical functions can also be characterised as joint $K$-bi-invariant eigenfunctions $\omega$
of the algebra $\bbd(X)$ of invariant differential operators on $X=G/K$ such that $\omega(e)=1$, and are parametrised
by $\fa^*_c/W$.

\subsection{The Satake transform}
Satake, inspired by these results of Harish--Chandra, set out to find a similar parametrization of the set of zonal 
spherical functions in the $p$--adic case \cite{S0,S}. Although in the $p$--adic setting the "infinitesimal version" $\bbd(X)$ of
the spherical Hecke algebra is not present, Satake realized that we can do without this here, because $\bH(G,K)$
is itself a finitely generated commutative $\bbc$--algebra. 

Using the Iwasawa decomposition $G=KAN$, Satake first constructs, for every complex character 
\begin{equation}
s\in T:=\textup{Hom}(L,\bbc^\times)
\end{equation} 
of $L$, 
a zonal spherical function $\go_s$ by analogy with (\ref{eq:phiHC}). The spherical Fourier transform of $f\in \mathbf{H}(G,K)$ 
at $s\in T$ is defined by:  
\begin{equation}\label{eq:sphft}
\Cf^G_{sph}(f)(s):=(\go_s,f^*)=\int_{g\in G}f(g)\go_s(g^{-1})dg, 
\end{equation}
By analogy with  (\ref{eq:abel}), he introduces a homomorphism  
\begin{align}\label{eq:sat}
\gamma_\Cs: \mathbf{H}(G,K)&\to \mathbf{H}(A,A\cap K)^W=\bbc[L]^W\\
\nonumber f&\to \left\{a\to \delta^{1/2}(a)\int_N f(an)dn\right\}
\end{align}
where $dn$ is the Haar measure on $N$, normalized so that the volume of $N\cap K$ is $1$, 
and $\delta$ denotes the Haar modulus of the group $AN$. 

As in the real case, we have: 
\begin{equation}\label{eq:abft}
\Cf^G_{sph}(f)(s)=\Cf^A_{sph}(\gc_\Cs(f))(s)
\end{equation}
where $\Cf^A_{sph}$ denotes the spherical Fourier transform on the abelian group $A$ relative to the 
compact subgroup $A\cap K$.  
Satake proves that 
\begin{equation*}
\Cf^G_{sph}: \mathbf{H}(G,K)\to \bbc[T]^W
\end{equation*}
is an algebra isomorphism, or equivalently (via the Fourier transform on $L$):  
\begin{thm}[\cite{S}]
Let $L=A/(A\cap K)$, a lattice of rank $n=\textup{dim}(\mathcal{A}_A)$. The above map 
\[\gamma_S: \mathbf{H}(G,K)\to \mathbf{H}(A,A\cap K)^W=\bbc[L]^W\]
is an algebra isomorphism. In particular, $\mathbf{H}(G,K)$ is isomorphic to $\bbc[T]^W$ (a subalgebra
of finite index of a polynomial algebra with $n$ generators).
\end{thm}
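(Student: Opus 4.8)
The plan is to deduce the theorem from the intertwining identity (\ref{eq:abft}), from the $W$-invariance of the zonal spherical functions $\go_s$, and from a triangularity property of $\gamma_S$ relative to the Cartan basis. Write $q$ for the cardinality of the residue field of $k$, let $\rho$ be the half sum of the chosen positive roots of $\gS$ (with multiplicities), and keep $dn$ normalised as in (\ref{eq:sat}), so that $\vol(N\cap K)=1$. By the Cartan decomposition $G=KL_+K$ the indicators $\{\mathbf{1}_{K\pi^\lambda K}:\lambda\in L_+\}$ form a $\bbc$-basis of $\bH(G,K)$, while the $W$-orbit sums $m_\lambda:=\sum_{\mu\in W\lambda}e^\mu\in\bbc[L]$, $\lambda\in L_+$, form a $\bbc$-basis of $\bbc[L]^W$. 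These two indexings are matched by the dominance partial order $\preceq$ on $L_+$, which is locally finite (for each $\lambda$, only finitely many $\mu\in L_+$ satisfy $\mu\preceq\lambda$, since such $\mu$ lie in the convex hull of $W\lambda$), and it is this finiteness that makes the triangularity argument below go through.

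First I would check that $\gamma_S$ is a well defined algebra homomorphism into $\bbc[L]$. For $f\in\bH(G,K)$ the slice $\{n\in N:an\in\supp f\}$ is compact for every $a\in A$ and is empty once $a$ leaves a fixed compact subset of $A$ (Iwasawa decomposition $G=KAN$); hence $a\mapsto\delta^{1/2}(a)\int_N f(an)\,dn$ is compactly supported and $(A\cap K)$-bi-invariant, i.e. an element of $\bbc[L]=\bH(A,A\cap K)$. That $\gamma_S$ is multiplicative is then free from (\ref{eq:abft}): for each $s\in T$ the functional $f\mapsto\Cf^G_{sph}(f)(s)$ is multiplicative, because $\go_s$ is a joint eigenfunction of $\bH(G,K)$ and hence $f_1*f_2$ acts on it by the product of the two eigenvalues, exactly as in the discussion following (\ref{eq:transf}); and $\Cf^A_{sph}:\bbc[L]\xrightarrow{\sim}\bbc[T]$ is an isomorphism of $\bbc$-algebras. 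Reading (\ref{eq:abft}) as $\Cf^A_{sph}\circ\gamma_S=\Cf^G_{sph}$ and invoking the injectivity of $\Cf^A_{sph}$ gives the multiplicativity of $\gamma_S$.

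Next comes what I expect to be the main obstacle: that the image of $\gamma_S$ lies in $\bbc[L]^W$. I would derive this from the claim that $\go_s=\go_{ws}$ for all $w\in W$ and $s\in T$; granting this, (\ref{eq:abft}) yields $\Cf^A_{sph}(\gamma_S f)(ws)=\Cf^G_{sph}(f)(ws)=\Cf^G_{sph}(f)(s)=\Cf^A_{sph}(\gamma_S f)(s)$ for all $s$, and since $\Cf^A_{sph}$ is an isomorphism this forces $\gamma_S f\in\bbc[L]^W$. To prove $\go_s=\go_{ws}$ one realises $\go_s$ as the normalised spherical matrix coefficient of the unramified principal series representation $\pi_s$ attached to $s\in T$ (the analog of (\ref{eq:phiHC}) is exactly such a matrix coefficient): for $s$ in a Zariski-dense subset of $T$ the standard intertwining operator $\pi_s\to\pi_{ws}$ is an isomorphism and does not vanish on the one-dimensional space of $K$-fixed vectors, so it matches the normalised spherical vectors, whence $\go_s=\go_{ws}$ on that dense set and then everywhere by continuity (indeed analyticity) in $s$. (Closer to Satake's original approach, one can instead reduce, by parabolic induction from the rank-one Levi subgroups, to the explicit $\mathrm{PGL}_2$ computation.) This $W$-invariance is the only ingredient that is genuinely not formal.

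Finally I would establish the triangularity of $\gamma_S$ and conclude. A direct analysis of $K\pi^\lambda K$ in Iwasawa coordinates gives, for $\lambda\in L_+$,
\[
\gamma_S(\mathbf{1}_{K\pi^\lambda K})=q^{\langle\rho,\lambda\rangle}\,m_\lambda+\sum_{\mu\in L_+,\ \mu\prec\lambda}c_{\lambda\mu}\,m_\mu,\qquad c_{\lambda\mu}\in\bbz_{\ge0}.
\]
Indeed $\gamma_S(\mathbf{1}_{K\pi^\lambda K})$ is supported on $\bigcup_{\mu\in L_+,\,\mu\preceq\lambda}W\mu$ (the standard dominance bound on the Iwasawa coordinates of elements of $K\pi^\lambda K$), it is $W$-invariant by the previous step, and its value at $\pi^\lambda$ is $\delta^{1/2}(\pi^\lambda)\cdot\vol\{n\in N:\pi^\lambda n\in K\pi^\lambda K\}=q^{-\langle\rho,\lambda\rangle}\cdot q^{\langle 2\rho,\lambda\rangle}=q^{\langle\rho,\lambda\rangle}\ne0$. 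Since $\preceq$ is locally finite on $L_+$, the matrix $(c_{\lambda\mu})$ is upper triangular with nonzero diagonal with respect to any linear refinement of $\preceq$; therefore $\gamma_S$ is injective and its image is precisely $\bigoplus_{\lambda\in L_+}\bbc\,m_\lambda=\bbc[L]^W$. Combined with the second paragraph this shows $\gamma_S:\bH(G,K)\to\bH(A,A\cap B)^W=\bbc[L]^W$ is an algebra isomorphism. The Fourier isomorphism $\bbc[L]\cong\bbc[T]$ restricts to $\bbc[L]^W\cong\bbc[T]^W$, and since $L$ is a $W$-stable sublattice of finite index in the coweight lattice of $\gS$ while the invariants of the full coweight lattice form a polynomial algebra on $n=\rk L$ generators (Chevalley), $\bbc[T]^W\cong\bbc[L]^W$ is a subalgebra of finite index of a polynomial $\bbc$-algebra on $n$ generators.
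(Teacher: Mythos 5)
Your argument is correct in outline, but it follows Satake's original route rather than the one this paper takes. At the point of the statement the paper simply cites \cite{S}; its own derivation comes later and runs through the Iwahori Hecke algebra: the Bernstein--Zelevinski presentation (Theorem \ref{thm:bzbasis}), the description of the center $\bZ=\bA^W$ (Theorem \ref{thm:cent}), the identity $\mathbf{H}(G,K)=e_K\bZ$ together with the isomorphism $\epsilon(z)=e_K\eta_B(z)$ (Corollary \ref{eq:sphcent}), and the explicit formula (\ref{eq:mac}) for $e_K\theta_\gl e_K$; Theorem \ref{thm:macsat} then identifies $\gamma_S=\epsilon^{-1}$, so injectivity, surjectivity and the $W$-invariance of the image all fall out of the Hecke-algebra computation, and one gets Macdonald's spherical function formula and the explicit inverse (\ref{eq:satinv}) in the same stroke. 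You instead work entirely inside the spherical Hecke algebra: multiplicativity via (\ref{eq:abft}) and the eigenfunction property of $\go_s$, $W$-invariance of the image via $\go_s=\go_{ws}$ (generic isomorphy of unramified principal series under intertwining operators, or Satake's rank-one reduction), and bijectivity from triangularity of $\gamma_S(\mathbf{1}_{K\pi^\gl K})$ with respect to dominance, with leading coefficient $q^{\langle\rho,\gl\rangle}\neq0$; the dominance bound on Iwasawa coordinates and the intertwining-operator input are exactly the standard nonformal ingredients, and you flag them as such. Your approach buys independence from the Iwahori subgroup and the Bernstein presentation, at the price of losing the explicit inversion of $\gamma_S$ and the $c$-function/Macdonald formula that the paper's route yields for free. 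Two minor slips, neither affecting the proof: the constants $c_{\gl\mu}$ are nonnegative but need not lie in $\bbz$ (they carry a factor $\delta^{1/2}(\pi^\mu)$), and only the nonvanishing of the diagonal is used; and your justification of the final parenthetical assumes $L$ has finite index in the coweight lattice of $\gS$, which fails when $\underline{G}$ has a split central torus (e.g.\ $\textup{GL}_n$) --- the parenthetical in the statement is loose on the same point, and the main assertion that $\gamma_S$ is an algebra isomorphism onto $\bbc[L]^W$ is unaffected.
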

\begin{cor}
The set of zonal spherical functions is naturally parametrized by the set of points of the affine variety
${W\backslash T}$ over $\bbc$.
\end{cor}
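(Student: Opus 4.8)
The plan is to deduce the statement from Satake's theorem above, combined with the identification of zonal spherical functions with the $\bbc$-algebra characters of the spherical Hecke algebra $\mathbf{H}(G,K)$; once these two ingredients are in place the corollary is a formal consequence of invariant theory for the finite group $W$.

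First I would record, exactly as in the real case treated around \eqref{eq:zonal}, that the assignment
\[
\varphi\longmapsto \Cf^{\varphi}_{sph},\qquad \Cf^{\varphi}_{sph}(f)=\int_{g\in G} f(g)\varphi(g^{-1})\,dg,
\]
is a bijection from the set of (normalised) zonal spherical functions of $(G,K)$ onto $\Hom_{\bbc\text{-alg}}(\mathbf{H}(G,K),\bbc)$. Well-definedness and injectivity go through as before; for surjectivity one invokes Satake's construction of $\go_s$ for $s\in T$: given a character $\chi$, Satake's theorem produces $s\in T$ with $\chi=\Cf^G_{sph}(\,\cdot\,)(s)=\Cf^{\go_s}_{sph}$, so $\chi$ is realised by the zonal spherical function $\go_s$. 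That a normalised zonal spherical function is determined by its eigenvalue character is the $p$-adic analogue of the uniqueness appearing in the corollary following Theorem~\ref{thm:HChom}, and rests on the commutativity and finite generation of $\mathbf{H}(G,K)$.

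Next I would feed in Satake's isomorphism. Since $\gamma_\Cs\colon \mathbf{H}(G,K)\xrightarrow{\ \simeq\ }\bbc[T]^W=\bbc[L]^W$ is an isomorphism of $\bbc$-algebras, it induces a bijection
\[
\Hom_{\bbc\text{-alg}}(\mathbf{H}(G,K),\bbc)\;\cong\;\Hom_{\bbc\text{-alg}}(\bbc[T]^W,\bbc)\;=\;(T\git W)(\bbc),
\]
the set of $\bbc$-points of the affine variety $\operatorname{Spec}(\bbc[T]^W)$. Because $W$ is a finite group acting on the affine variety $T$, every $W$-orbit in $T$ is closed, so the quotient morphism $T\to T\git W$ is surjective with fibres precisely the $W$-orbits; hence $(T\git W)(\bbc)$ is canonically identified with the orbit set $T/W$. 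Composing the three bijections yields the asserted natural parametrisation.

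The only step that is not purely formal is the surjectivity/uniqueness assertion of the second paragraph --- that every algebra character of $\mathbf{H}(G,K)$ is the eigenvalue character of exactly one normalised zonal spherical function. This is where the Satake functions $\go_s$ and a separation argument for joint eigenfunctions of the commutative algebra $\mathbf{H}(G,K)$ are needed, and it mirrors the functional-equation argument used for \eqref{eq:zonal}. Everything downstream --- transporting characters through $\gamma_\Cs$ and recognising $T/W$ as $(T\git W)(\bbc)$ --- follows at once from Satake's theorem and the finiteness of $W$.
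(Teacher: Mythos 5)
Your argument is correct and is essentially the paper's own route: the corollary is stated there as an immediate consequence of Satake's theorem, via the identification of zonal spherical functions with $\bbc$-algebra characters of $\mathbf{H}(G,K)$ and the standard fact that characters of $\bbc[T]^W$ are the points of $T/W$ (the quotient by the finite group $W$). Your extra care about surjectivity, using Satake's functions $\go_s$, just makes explicit what the paper leaves implicit.
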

A major difference with the real case is the fact that the zonal spherical functions in the $p$--adic case are 
{\it rational} functions, which can be made \emph{completely explicit}. 
Recall the integral representation for zonal spherical functions (which is the $p$--adic analog of (\ref{eq:phiHC})).
Given $s\in T$ we define the map $\nu_s:G\to\bbc^\times$, using the Iwasawa decomposition $G=KAN$, by 
$\nu_s(g)=\nu_s(kan)=(\delta^{-1/2}s)(a)$. Then we define:
\begin{equation}
\go_s(g):=\int_K\nu_s(g^{-1}k)dk
\end{equation}
In the $p$--adic case this integral was evaluated directly by I. G. Macdonald (\cite{Ma2}), thereby also explicitly
inverting Satake's isomorphism $\gc_S$ and computing the spherical Plancherel measure (see Theorem \ref
{thm:mac}).

\subsection{The Iwahori subgroup and Iwahori--Hecke algebra $\bH(G,B)$}

With the benefit of hindsight, these developments are best understood by placing them in the
context of another major insight that arose in the representation theory of $p$--adic reductive
groups. Let $B\subset K$ be the Iwahori subgroup fixing the alcove $\Cc_0$. 
Casselman and Borel \cite{Cas2,Bo} pointed out that the category $\mathcal{C}(G)_{(B,1)}$
of B--spherical smooth representations is an {\it abelian} subcategory of the category $\mathcal{C}(G)$
of smooth representations of $G$, and that it is generated by the unramified minimal principal series. 
This is in stark contrast with the category of $K$--spherical smooth representations, which is not abelian.

Let
\[ \bH(G,B)=C_c^\infty(G/\!\!/B)\]
be the {\it Iwahori--Hecke} algebra of $(G,B)$. Then, the functor $\mathcal{C}(G)\ni V\to V^B$ restricts
to an equivalence of $\mathcal{C}(G)_{(B,1)}$ with the category of $\bH(G,B)$--modules.

The spherical Hecke algebra $\bH(G,K)$ naturally embeds in $\bH(G,B)$, though not as a unital subalgebra
since their unit elements $e_{K},e_{B}$ are different. The computation of the zonal spherical functions and
their asymptotic expansion can be reduced to an elementary study of the $\bH(K,B)$--spherical representation
theory of $\bH(G,B)$ (where the trivial representation of $\bH(K,B)$ is the one dimensional representation
$T_w\to q(w)$ of $\bH(K,B)$). 

\subsection{Two presentations of $\bH(G,B)$}

The affine roots $a\in R^a$ of $G$ correspond to root subgroups $N_a$ such that $N_a\subset N_{a-1}$, 
with finite index $q_a:=(N_{a-1}:N_a)$.
Macdonald \cite{Ma2} defines a possibly non--reduced root system $R_{nr}$ containing $R$, by 
adding $\ga/2$ as a root whenever the affine roots $q_\ga$ and $q_{\ga+1}$ are distinct, and defined:\footnote
{Macdonald operates in the context of simply connected Chevalley groups, but his setup can be generalized to
reductive groups, e.g. using the theory developed in \cite{I}.}
\begin{equation}
q_{\ga/2}:=q_{a+1}/q_a
\end{equation}
Let $V=N_G(A)$ be the normalizer of $A$ in $G$, then $(V,B)$ is an extended (in the sense of \cite{I}) Tits pair,
with Weyl group the extended affine Weyl group $V/(A\cap B)=:W^e$.  
This extended affine Weyl group satisfies $W^e=L\rtimes W$, 
where $W=W_{x_0}$. The length function $l$ on the affine Weyl group $W^a$  can be extended to $W^e$ (cf. \cite[Section 1]{EO}), and 
the set $\Omega\subset W^e$ of elements of length $0$ is a normal abelian subgroup such that 
$W^e=W^a\rtimes \Omega$. 
For elements $w\in W^e$ of the extended affine Weyl group, one defines 
\begin{equation}
q(w):=(Bn_wB:B)=\prod_{a\in  R^a_+\cap w^{-1} R^a_-}q_a
\end{equation}
where $n_w$ is a representative of $w$ in the normalizer $V$ of $A\cap B$ in $G$.  

The Iwahori--Hecke algebra $\bH(G,B)$ has a $\bbc$-linear basis $\{T_w\}_{w\in W^e}$, where $T_w$ is the characteristic 
function of the double coset $Bn_wB$, divided by the volume of $B$. Let $S^a$ denote the set of affine simple reflections in
$W^e$. Basic facts of Bruhat--Tits theory applied to the Tits pair $(B,N)$ then yields the well--known relations in $\bH(G,B)$ as 
in \cite{IM}, \cite{I}: 
\begin{itemize}
\item[(a)] If ${l}(ww^\prime)={l}(w)+
{l}(w^\prime)$ then $T_wT_{w^\prime}
=T_{ww^\prime}$.
\item[(b)] If $s\in S^a$ then
$(T_s+1)(T_s-q(s))=0$.
\end{itemize}

The Iwahori--Hecke algebra $\bH(G,B)$ admits a trace $\tau$ given by:
\begin{equation}
\tau(\sum_{w\in W^e}c_wT_w)=c_e
\end{equation} 
and anti-linear 
anti--involution $*$ such that $T_w^*=T_{w^{-1}}$, giving rise to a positive definite Hermitean form $(a,b):=\tau(ab^*)$ 
such that $(T_v,T_w):=\delta_{v,w}q(w)$.   

Given $\gl\in L$ write $t_\gl\in W^e$ for the corresponding translation, viewed as an element of the extended affine Weyl group $W^e$. 
Bernstein defined an algebra embedding 
\begin{equation}\label{eq:bern}
\eta_B: \mathbf{H}(A,A\cap B)=\bbc[L]\xhookrightarrow{} \mathbf{H}(G,B), 
\end{equation}
characterised by: If $\lambda\in L_+\subset L=A/(A\cap B)$, the commutative monoid of positive 
translations of $\mathcal{A}_S$, then:  
\begin{equation*}
\eta_B(\pi^\gl e_{A\cap B})=q(t_\gl)^{-1/2}T_{t_\lambda}:=q(t_\gl)^{-1/2}T_e \pi^\lambda T_e\in \mathbf{H}(G,B).
\end{equation*}

This gives rise to another presentation of the algebra $\bH(G,B)$, the Bernstein--Zelevinski
presentation.\footnote{This is unpublished work of Bernstein and Zelevinski, but see e.g. \cite{Lu}}
For $\gl\in L$ we put $\theta^A_\gl=\pi^\gl e_{A\cap B}$ and we define $\theta_\gl:=\eta_B(\theta^A_\gl)\in\bH(G,B)$. Let $\bA\subset \bH(G,B)$ denote 
the the image of $\eta_B$ (the commutative subalgebra of $\bH(G,B)$ generated by the $\theta_\gl$). 
\begin{thm}\label{thm:bzbasis}\hfill
\begin{itemize}
\item[(a)] The elements $\{T_w\theta_\gl\}_{w\in W,\ \gl\in L}$ are
a basis of $\bH(G,B)$.
\item[(b)] The elements $\{\theta_\gl T_w\}_{w\in W,\ \gl\in L}$ are also 
a basis of $\bH(G,B)$.
\item[(c)] 
Let $\gl\in L$, $\alpha$ a simple root for $ R$, and set $s=s_\alpha$. Then
\begin{gather*}
\begin{split}
\theta_\gl&T_s-T_s\theta_{s(\gl)}=\\
&=\left\{
\begin{array}{ccc}
&(q_{\alpha}-1)\frac{\theta_\gl-\theta_{s(\gl)}}
{1-\theta_{-\alpha^\vee}}\ &{\rm if}\ \alpha/2\not\in R_{nr}.\\
&((q_{\alpha/2}q_{\alpha}-1)
+q_{\alpha/2}^{1/2}
(q_{\alpha}-1)\theta_{-\alpha^\vee})
\frac{\theta_\gl-\theta_{s(\gl)}}
{1-\theta_{-2\alpha^\vee}}\ &{\rm if}\ \alpha/2\in R_{nr}.\\
\end{array}
\right.\\
\end{split}
\end{gather*}
\end{itemize}
\end{thm}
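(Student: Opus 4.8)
The plan is to establish (a), (b), and (c) in order, deriving each from the basic structure theory of the Iwahori Hecke algebra $\bH(G,B)$ together with Bernstein's embedding $\eta_B$ of \eqref{eq:bern}. For part (a), the starting point is that the set $\{T_w\mid w\in W^e\}$ is a $\bbc$-basis of $\bH(G,B)$ by construction, and the Iwasawa-type (Bruhat) decomposition $W^e = W\cdot L$ (every $w\in W^e$ factors uniquely as $w = u\, t_\gl$ with $u\in W$, $\gl\in L$). First I would show, by induction on the length of $t_\gl$, that $T_u\theta_\gl$ lies in the span of $\{T_{w}\mid w\in W^e\}$ with leading term $T_{u t_\gl}$ (up to a nonzero scalar) and lower-order corrections indexed by strictly shorter elements; this uses the defining property $\eta_B(\pi^\gl e_{A\cap B}) = q(t_\gl)^{-1/2}T_e\pi^\gl T_e$ together with relation (a) of the Hecke presentation when $\gl$ is not dominant, writing $\theta_\gl$ as a product $\theta_{\gl_1}\theta_{\gl_2}^{-1}$ of dominant translations and expanding. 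A triangularity/unitriangularity argument then promotes this to the assertion that $\{T_u\theta_\gl\}$ is a basis. Part (b) follows by the symmetric argument, or more cheaply by applying the anti-involution $*$ (which sends $T_w\mapsto T_{w^{-1}}$ and, as one checks on generators, $\theta_\gl\mapsto \theta_{-\gl}$ up to the normalizing powers of $q$) to the basis in (a).

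For part (c), the cross-relation, I would first reduce to the rank-one case. Since $\alpha\in F$ is simple and $s=s_\alpha$, both sides of the claimed identity are supported — after multiplying through by the denominator $1-\theta_{-\alpha^\vee}$ (resp.\ $1-\theta_{-2\alpha^\vee}$) — on the subalgebra generated by $\bA$ and $T_s$, which is the image of the affine Hecke algebra of the rank-one root subsystem spanned by $\alpha$. So it suffices to verify the formula in $\widehat{\bH}$ for $SL_2$ or $PGL_2$ (reduced case) and for the rank-one non-reduced ($BC_1$) situation, where $\alpha/2\in\gS_{nr}$. In each case one computes $T_s\theta_\gl$ and $\theta_\gl T_s$ directly: write $\gl = (\gl - s(\gl)) + s(\gl)$, note $\gl - s(\gl) = \langle\gl,\alpha^\vee\rangle\,\alpha^\vee/\ldots$ is an integer multiple of the coroot, and use the quadratic relation (b), $T_s^2 = (q(s)-1)T_s + q(s)$, to move $T_s$ past a single translation $\theta_{\pm\alpha^\vee}$ (resp.\ $\theta_{\pm 2\alpha^\vee}$). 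Telescoping the resulting geometric-type sum over the intermediate translations produces exactly the factor $\frac{\theta_\gl - \theta_{s(\gl)}}{1-\theta_{-\alpha^\vee}}$ (resp.\ with $2\alpha^\vee$) and the stated coefficient involving $q_\alpha$ and $q_{\alpha/2}$, the two cases being distinguished by whether the affine reflection along $\alpha$ contributes one or two distinct parameters $q_\alpha$, $q_{\alpha/2}$.

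The main obstacle is bookkeeping the parameters correctly in the non-reduced case $\alpha/2\in\gS_{nr}$: here the two affine roots $\alpha$ and $\alpha+1$ (in the notation of Section~4) carry genuinely different indices, and the coefficient $(q_{\alpha/2}q_\alpha - 1) + q_{\alpha/2}^{1/2}(q_\alpha - 1)\theta_{-\alpha^\vee}$ reflects the factorization of the quadratic relation for the simple affine reflection of type $BC_1$ into two "half" reflections. Getting the normalizing powers $q(t_\gl)^{-1/2}$ from $\eta_B$ to combine with the $q_a$'s into precisely these expressions — rather than, say, their inverses or square roots thereof — requires careful tracking of which representatives $n_w\in V$ are used and of the length additivity $l(ww') = l(w) + l(w')$ in the products $T_e\pi^\gl T_e$. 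Once the rank-one computation is pinned down with the right conventions, the general case is immediate, and together with (a) and (b) this yields the Bernstein--Zelevinski presentation of $\bH(G,K)$.
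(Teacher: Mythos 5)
The paper itself gives no proof of Theorem \ref{thm:bzbasis}: it is quoted as unpublished work of Bernstein and Zelevinski with a pointer to \cite{Lu}, so your proposal has to be judged against the standard Bernstein--Lusztig argument, which is indeed the route you outline. Two of your steps, however, are wrong as stated. In (a), the claim that $T_u\theta_\gl$ is a nonzero multiple of $T_{ut_\gl}$ plus terms $T_z$ with $\ell(z)<\ell(ut_\gl)$ fails: take $\gl\in L_+$ regular and $u=s$ a simple reflection, so that $\ell(st_\gl)=\ell(t_\gl)-1$ and $\theta_\gl=q(t_\gl)^{-1/2}T_{t_\gl}$; then
\[
T_s\theta_\gl=q(t_\gl)^{-1/2}\bigl((q(s)-1)\,T_{t_\gl}+q(s)\,T_{st_\gl}\bigr),
\]
which contains the term $T_{t_\gl}$, strictly \emph{longer} than $T_{st_\gl}$. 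The triangularity that actually holds is with respect to a finer order (first the length, equivalently the dominance class, of the translation part, then the finite part), or one separates the argument into spanning (induction on $\ell(t_\gl)$) and an independent linear-independence argument; as written, your induction does not close. In (b), the parenthetical claim that the anti-involution with $T_w\mapsto T_{w^{-1}}$ sends $\theta_\gl$ to $\theta_{-\gl}$ up to powers of $q$ contradicts Proposition \ref{prop:star} of the paper: the correct formula is $\theta_\gl^*=T_{w_0}\theta_{-w_0(\gl)}T_{w_0}^{-1}$, and this is not proportional to $\theta_{-\gl}$ (compare supports already for $\gl$ dominant). The deduction of (b) from (a) survives once you use the correct formula, since conjugation by the invertible $T_{w_0}$ and reindexing preserve the basis property; alternatively, simply run the argument of (a) with the factors in the other order.

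For (c), the reduction is looser than you assert: the subalgebra generated by $\bA$ and $T_s$ is not the affine Hecke algebra of the rank-one subsystem (it contains the full rank-$n$ lattice), so one cannot literally check the identity in $\textup{SL}_2$ or $\mathsf{BC}_1$ and be done. The standard proof verifies the relation for $\gl$ with $s(\gl)=\gl$ and for one or two generating values of $\langle\gl,\alpha\rangle$, and then propagates to all $\gl$ using the additivity (cocycle) property in $\gl$ of both sides, which is what produces the telescoping geometric sum you describe. Note also that the base case is not a consequence of the quadratic relation for $T_s$ alone: to move $T_s$ past $\theta_{\alpha^\vee}$ (resp.\ $\theta_{2\alpha^\vee}$) one must express that translation through the affine simple generator and use \emph{its} quadratic relation, and this is precisely where $q_{\alpha/2}=q_{a+1}/q_a$ and the dichotomy $\alpha/2\in\gS_{nr}$ or not enter. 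You correctly flag the parameter bookkeeping as the delicate point, but the sketch underestimates where the affine parameter comes in. With these repairs your proposal becomes the standard proof.
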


One important consequence of this theorem is the precise description of the center $\bfz$ of $\bH(G,B)$.
The following result is also due to Bernstein and Zelevinski.
\begin{thm}\label{thm:cent}\hfill
\begin{enumerate}
\item $A$ and $A\cap B$ are $W$--invariant, hence $\bH(A,A\cap B)$ is naturally a $W$--module
via the action of $W$ on $L=A/(A\cap B)$ (i.e. $w(\theta_\gl^A)=\theta^A_{w\gl}$).
\item
Transfer the $W$--action to $\bA$ via $\eta_B$. 
Then the center $\bfz=\bfz(\bH(G,B))$ equals
\[\bfz=\eta_B(\bH(A,A\cap B)^{W})=\bA^W\]
In particular, $\eta_B$ restricts to an isomorphism from $\bH(A,A\cap B)^W$ onto $\bfz$
\end{enumerate}
\end{thm}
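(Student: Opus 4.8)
The plan is to prove the equality $\bZ=\bA^{W}$ by a double inclusion; the remaining assertion of the theorem then follows at once, since $\eta_B\colon\bH(A,A\cap B)=\bbc[L]\to\bA$ is an algebra isomorphism onto its image $\bA$ which is $W$-equivariant by the very way the $W$-action on $\bA$ is defined, so it restricts to an isomorphism $\bH(A,A\cap B)^{W}\xrightarrow{\sim}\bA^{W}$. Throughout I would use only the structural results of Theorem~\ref{thm:bzbasis}: the two bases $\{T_w\theta_\gl\}$ and $\{\theta_\gl T_w\}$ of parts (a)--(b), the commutation relation (c), the fact that $\bA\cong\bbc[L]$ is an integral domain, and that $W$ acts faithfully on $L$.

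For $\bA^{W}\subseteq\bZ$ the key point is that $\bH(G,B)$ is generated as a $\bbc$-algebra by $\bA$ together with the $T_s$ for $s$ a simple reflection (each $T_w$ is a product of such $T_s$, and basis (a) then gives everything), so I only have to check that an element $z\in\bA^{W}$ commutes with each $T_s$; commutation with the $\theta_\mu$ is free since $\bA$ is commutative. Writing $z=\sum_\gl c_\gl\theta_\gl$ and using (c) in the form $\theta_\gl T_s=T_s\theta_{s(\gl)}+g_\gl$ with $g_\gl\in\bA$, one finds
\[
zT_s-T_sz=T_s\Bigl(\textstyle\sum_\gl c_\gl(\theta_{s(\gl)}-\theta_\gl)\Bigr)+\sum_\gl c_\gl g_\gl .
\]
Pairing each $\gl$ with $s(\gl)$ and using $c_\gl=c_{s(\gl)}$, the first sum telescopes to $0$; for the second, the explicit form of $g_\gl$ in (c) shows that $\bigl(\sum_\gl c_\gl g_\gl\bigr)\cdot(1-\theta_{-\alpha^\vee})$ (respectively times $1-\theta_{-2\alpha^\vee}$) equals a fixed factor times $\sum_\gl c_\gl(\theta_\gl-\theta_{s(\gl)})=0$, so it vanishes too because $\bbc[L]$ is a domain. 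Hence $zT_s=T_sz$ and $z\in\bZ$.

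For the reverse inclusion $\bZ\subseteq\bA^{W}$ I would take $z\in\bZ$, expand $z=\sum_{w\in W}a_wT_w$ in the basis (b) with $a_w\in\bA$, and run a ``leading term'' argument with respect to the length function on $W$. Iterating (c) along a reduced word for $w$ shows
\[
T_w\theta_\mu=\theta_{w(\mu)}T_w+\bigl(\text{$\bbc$-combination of }\theta_\nu T_{w'}\text{ with }\ell(w')<\ell(w)\bigr),
\]
so in the identity $z\theta_\mu=\theta_\mu z$ the coefficient (in basis (b)) of $T_w$, for a $w$ of maximal length with $a_w\ne0$, equals $a_w\theta_{w(\mu)}$ on one side and $a_w\theta_\mu$ on the other; since $\bA$ is a domain this forces $\theta_{w(\mu)}=\theta_\mu$ for all $\mu\in L$, hence $w(\mu)=\mu$ for all $\mu$, hence $w=e$ by faithfulness. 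Thus $z=a_e\in\bA$. Finally, for $z\in\bA\cap\bZ$ relation (c) gives $T_sz=(s\cdot z)T_s+(\text{an element of }\bA)$ while $zT_s$ is already a combination of the basis vectors $\theta_\nu T_s$, so the coefficient of $T_s$ in $0=zT_s-T_sz$ is $z-s\cdot z$, whence $s\cdot z=z$ for every simple $s$, i.e.\ $z\in\bA^{W}$. The one genuinely delicate step is the length-filtration bookkeeping in this last part: one must verify that \emph{all} the error terms produced by iterating (c) and by the products $T_{w'}T_s$ strictly drop the length of the $W$-component, so that the top-length coefficient is unambiguous and the comparison above is legitimate. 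Everything else is routine once Theorem~\ref{thm:bzbasis} is granted.
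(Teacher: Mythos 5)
Your argument is correct. Note that the paper itself gives no proof of Theorem \ref{thm:cent} (it is attributed to Bernstein--Zelevinski, cf.\ \cite{Lu}), and your double-inclusion scheme --- checking $\bA^W\subseteq\bZ$ on the generators $T_s$ via relation (c) of Theorem \ref{thm:bzbasis}, then proving $\bZ\subseteq\bA$ by the leading-term argument in the basis $\{\theta_\gl T_w\}$ and finally $\bZ\cap\bA\subseteq\bA^W$ by comparing the coefficient of $T_s$ --- is exactly the standard Bernstein--Lusztig proof. The two points you flag do go through: the length bookkeeping works because $x_0$ is a special point, so the restriction of the affine length to $W=W_{x_0}$ is its Coxeter length and each $T_w$ ($w\in W$) factors as a product of $T_s$'s by relation (a), every correction term produced by (c) or by a product $T_vT_s$ having strictly smaller $W$-component; and the $W$-action on $L$ is indeed faithful, since $L\otimes_{\bZ}\bbr\simeq\fa$ carries the reflection representation of the little Weyl group.
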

The following result is also well--known.
\begin{proposition}\label{prop:star}
Let ${w_0}\in W$ denote the longest element, and $\gl\in L$. Then:
 \[
\theta^*_\gl=T_{w_0}\theta_{-w_0({\gl})}T_{w_0}^{-1}.
\]
\end{proposition}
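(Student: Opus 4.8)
The plan is to verify the asserted identity $\theta^*_\lambda = T_{w_0}\theta_{-w_0(\lambda)}T_{w_0}^{-1}$ by checking that both sides define the same element of $\bH(G,B)$, using the structure theory assembled in Theorems~\ref{thm:bzbasis} and~\ref{thm:cent} together with the basic properties of the trace $\tau$ and the anti-involution $*$. First I would record that $*$ is an anti-linear anti-involution with $T_w^* = T_{w^{-1}}$, so on the one hand $(T_s+1)(T_s-q(s))=0$ is preserved, and on the other hand, for $\lambda\in L_+$, the Bernstein element satisfies $\theta_\lambda = q(t_\lambda)^{-1/2}T_{t_\lambda}$ with $q(t_\lambda)$ real, hence $\theta_\lambda^* = q(t_\lambda)^{-1/2}T_{t_\lambda^{-1}} = q(t_\lambda)^{-1/2}T_{t_{-\lambda}}$. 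Since $-\lambda \in L_-$, this is not directly of the form $q(t_{-\lambda})^{-1/2}T_{t_{-\lambda}}$, so the point is to re-express $T_{t_{-\lambda}}$ in terms of the $\theta$'s; this is exactly where the longest element $w_0$ enters, because conjugation by $T_{w_0}$ interchanges the positive and negative cones in $L$.

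Concretely, I would argue as follows. For $\lambda \in L_+$, write $t_{-\lambda}$ as a product realizing its length additively through the reduced word, and use Theorem~\ref{thm:bzbasis}(a),(b) to rewrite $T_{t_{-\lambda}}$. The cleanest route is: it is a standard fact (a consequence of the Bernstein presentation) that $T_{w_0}\theta_\mu T_{w_0}^{-1}$ lies in $\bA$ for every $\mu \in L$, and that the resulting algebra automorphism of $\bA$ is precisely $\theta_\mu \mapsto \theta_{-w_0(\mu)}$; this follows by checking it on generators $\theta_\mu$ with $\mu\in L_+$ using the commutation relation in Theorem~\ref{thm:bzbasis}(c) and induction on $\ell(w_0)$, reducing to the rank-one case where one computes $T_s\theta_\mu T_s^{-1}$ directly from (c). Granting this, both $\theta_\lambda \mapsto \theta^*_\lambda$ and $\theta_\lambda \mapsto T_{w_0}\theta_{-w_0(\lambda)}T_{w_0}^{-1}$ are anti-multiplicative (resp. appropriately multiplicative) maps $\bA \to \bA$, and it suffices to check they agree on the $\theta_\lambda$ with $\lambda \in L_+$, since these generate $\bA$ as an algebra and $L = L_+ - L_+$ as a group.

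So the final reduction is the rank-one computation: for $\lambda \in L_+$ and $s = s_\alpha$, compare $\theta_\lambda^*$ with $T_{s}\,\theta_{-s(\lambda)}\,T_{s}^{-1}$ by plugging the explicit expression for $\theta_\lambda T_s - T_s\theta_{s(\lambda)}$ from Theorem~\ref{thm:bzbasis}(c) into both sides, using $T_s^{-1} = q(s)^{-1}(T_s + 1 - q(s))$ and the fact that $*$ fixes $\theta_{-\alpha^\vee}$-type elements (they are real combinations of $\theta_\mu$'s that are manifestly $*$-symmetric). Matching the two cases $\alpha/2 \in \gS_{nr}$ and $\alpha/2 \notin \gS_{nr}$ of (c) is a finite check. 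The main obstacle, and the only place real care is needed, is the inductive passage from rank one to the general longest element: one must ensure that writing $w_0 = s_\alpha w_0'$ with $\ell(w_0') = \ell(w_0)-1$ interacts correctly with the $W$-action $\lambda \mapsto w_0(\lambda) = s_\alpha(w_0'(\lambda))$ and that the intermediate conjugations $T_{w_0'}\theta_\mu T_{w_0'}^{-1}$ stay inside $\bA$ at each stage — this is where Theorem~\ref{thm:bzbasis}(c) is used repeatedly, and keeping track of which roots are simple for $R$ relative to the shifted positive system is the book-keeping one must get right. Everything else is linear algebra in the Bernstein--Zelevinski basis. \qed
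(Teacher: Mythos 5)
The paper records this Proposition without proof (``well known''), so your argument has to stand on its own. Its first and last steps are fine: for $\gl\in L_+$ one indeed has $\theta_\gl^*=q(t_\gl)^{-1/2}T_{t_\gl}^*=q(t_\gl)^{-1/2}T_{t_{-\gl}}$, and once the identity is known for dominant $\gl$ it extends to all of $L$ because $\bA$ is commutative, $L=L_+-L_+$, $*$ is anti-multiplicative, and $\gl\mapsto T_{w_0}\theta_{-w_0(\gl)}T_{w_0}^{-1}$ is multiplicative in $\gl$. The middle of your argument, however, rests on a false ``standard fact'': conjugation by $T_{w_0}$ does \emph{not} preserve $\bA$, and $T_{w_0}\theta_\mu T_{w_0}^{-1}\ne\theta_{-w_0(\mu)}$. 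Already in rank one (where $w_0=s=s_\alpha$), Theorem \ref{thm:bzbasis}(c) (reduced case; the non-reduced case is analogous) gives, for any $\mu$ with $s\mu\ne\mu$, $T_s\theta_\mu=\theta_{s\mu}T_s+b$ with $0\ne b=(q_\alpha-1)(\theta_\mu-\theta_{s\mu})/(1-\theta_{-\alpha^\vee})\in\bA$, hence $T_s\theta_\mu T_s^{-1}=\theta_{s\mu}+b\,T_s^{-1}$; expanding $T_s^{-1}=q_\alpha^{-1}(T_s+1-q_\alpha)$ and using the basis of Theorem \ref{thm:bzbasis}(b), this element has the nonzero component $q_\alpha^{-1}b\,T_s$, so it does not lie in $\bA$. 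Worse, your claim is incompatible with the very statement being proved: applied with $\mu=-w_0(\gl)$ it would give $T_{w_0}\theta_{-w_0(\gl)}T_{w_0}^{-1}=\theta_\gl$, so the Proposition would collapse to $\theta_\gl^*=\theta_\gl$, which is false (in rank one $\theta_\gl^*=q(t_\gl)^{-1/2}T_{t_{-\gl}}$ has a nonzero $T_s$-component). For the same reason your aside that $*$ fixes elements such as $\theta_{-\alpha^\vee}$ is wrong: by the Proposition itself $\bA^*=T_{w_0}\bA T_{w_0}^{-1}\ne\bA$. The proposed induction on $\ell(w_0)$ inherits the defect, since the intermediate conjugates $T_{w_0'}\theta_\mu T_{w_0'}^{-1}$ leave $\bA$ --- exactly the point you flagged as needing care.

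The standard proof of the dominant case uses only the Iwahori--Matsumoto relation (a), not the Bernstein relation (c). In $W^e$ one has $w_0\,t_{-w_0(\gl)}\,w_0^{-1}=t_{-\gl}$, i.e.\ $t_{-\gl}w_0=w_0\,t_{-w_0(\gl)}$. Since $-\gl$ is antidominant and $-w_0(\gl)$ is dominant, the length formula in $W^e$ gives $\ell(t_{-\gl}w_0)=\ell(t_{-\gl})+\ell(w_0)$ and $\ell(w_0\,t_{-w_0(\gl)})=\ell(w_0)+\ell(t_{-w_0(\gl)})$, so relation (a) yields $T_{t_{-\gl}}T_{w_0}=T_{t_{-\gl}w_0}=T_{w_0\,t_{-w_0(\gl)}}=T_{w_0}T_{t_{-w_0(\gl)}}$, that is $T_{t_{-\gl}}=T_{w_0}T_{t_{-w_0(\gl)}}T_{w_0}^{-1}$. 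As $-w_0(\gl)\in L_+$ we have $T_{t_{-w_0(\gl)}}=q(t_{-w_0(\gl)})^{1/2}\theta_{-w_0(\gl)}$, and $q(t_{-w_0(\gl)})=q(t_\gl)$ because the parameters $q_a$ are $W$-invariant and $-w_0$ permutes the positive roots; hence $\theta_\gl^*=q(t_\gl)^{-1/2}T_{t_{-\gl}}=T_{w_0}\theta_{-w_0(\gl)}T_{w_0}^{-1}$ for $\gl\in L_+$, and your concluding multiplicative extension to all of $L$ finishes the proof. In short: keep your opening computation and the final extension, but replace the conjugation-automorphism claim by this length-additivity argument in the extended affine Weyl group.
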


\subsection{Macdonald's explicit formula for spherical functions}

Let $e_K\in \bH(G,B)$ be the unit of the spherical Hecke algebra. Then obviously we have
$e_K=P_W(q)^{-1}\sum_{w\in W} T_w$ where $P_W(q):=\sum_{w\in W}q(w)$ denotes the
Poincar\' e polynomial of the Weyl group $W$. Then $\mathbf{H}(G,K)=e_K\mathbf{H}(G,B)
e_K\subset \mathbf{H}(G,B)$.

By the Cartan decomposition, $\mathbf{H}(G,K)=e_K\eta_B(\bbc[L])e_K$ (indeed, the set
$\{e_K \theta_\gl e_K\mid \gl\in L_+\}$ is a basis). Using Theorem \ref{thm:bzbasis}, one
can prove easily (cf. \cite[Lemma 2.28, Theorem 2.29]{EO}, \cite[equation (1.23)]{HO}) 
that for all $\gl\in L$: 
\begin{equation}\label{eq:mac}
\theta_\gl^+:=e_K\theta_\gl e_K=e_KP_W(q^{-1})^{-1}\eta_B(\sum_{w\in W} w(c(\cdot,q)\theta^A_\gl))
\end{equation} 
where 
\begin{equation*}
c(\cdot,q)=\prod_{\alpha\in  R_{nr,+}}\frac{1-q^{-1}_\alpha q^{-1}_{\alpha/2}\theta^A_{-\alpha^\vee}}{1-q^{-1}_{\alpha/2}\theta^A_{-\alpha^\vee}}
\end{equation*} 
(here the $\cdot$ refers to the interpretation of this expression as a rational function on $T$).
\begin{cor}\label{eq:sphcent}
We have $\bH(G,K)=e_K\bfz$, and 
\[\epsilon:\bH(A,A\cap B)^W\to \bH(G,K),
\qquad
z\to e_K\eta_B(z)\]
 is 
an isomorphism. 
\end{cor}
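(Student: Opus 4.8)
The plan is to deduce the corollary from Theorem~\ref{thm:cent} together with the identity (\ref{eq:mac}). First I would recall the two facts we are allowed to use: by Theorem~\ref{thm:cent} the Bernstein map $\eta_B$ restricts to an algebra isomorphism $\bH(A,A\cap B)^W\xrightarrow{\simeq}\bZ$, so it suffices to show that $z\mapsto e_K\eta_B(z)$ is an algebra isomorphism from $\bZ$ onto $\bH(G,K)=e_K\bH(G,B)e_K$. Since $\bZ$ is central in $\bH(G,B)$, for $z\in\bZ$ we have $e_K\eta_B(z)=e_K\eta_B(z)e_K\in e_K\bH(G,B)e_K=\bH(G,K)$, and moreover $e_K^2=e_K$ (it is the unit idempotent of the spherical Hecke algebra). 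Multiplicativity is the one-line computation $e_K z_1\cdot e_K z_2 = e_K^2 z_1 z_2 = e_K(z_1 z_2)$, using centrality to move the second $e_K$ past $z_1$; so the map is an algebra homomorphism.

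Next I would establish surjectivity. By the Cartan decomposition $\bH(G,K)=e_K\eta_B(\bbc[L])e_K$, so it is enough to hit each $\theta_\gl^+:=e_K\theta_\gl e_K$ for $\gl\in L$. But formula (\ref{eq:mac}) writes $\theta_\gl^+=e_K\,\eta_B\!\big(P_W(q^{-1})^{-1}\sum_{w\in W}w(c(\cdot,q)\theta^A_\gl)\big)$, and the argument of $\eta_B$ here is manifestly $W$-invariant (it is an average over $W$), hence lies in $\bH(A,A\cap B)^W$; applying $\eta_B$ lands it in $\bZ$ by Theorem~\ref{thm:cent}. Thus $\theta_\gl^+=e_K\cdot(\text{element of }\bZ)$, which is exactly in the image of $\epsilon$. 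Since the $\theta_\gl^+$ with $\gl\in L_+$ form a basis of $\bH(G,K)$, the map $\epsilon$ is onto.

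For injectivity I would argue as follows. Composing $\epsilon$ with $\eta_B^{-1}:\bZ\xrightarrow{\simeq}\bH(A,A\cap B)^W$ (Theorem~\ref{thm:cent}) it is equivalent to show that $z\mapsto e_K\eta_B(z)$ is injective on $\bZ$. Suppose $e_K\eta_B(z)=0$. Using Theorem~\ref{thm:bzbasis}(a), every element of $\bH(G,B)$ is a combination of the basis $T_w\theta_\mu$, and $e_K=P_W(q)^{-1}\sum_{w\in W}T_w$; writing $\eta_B(z)=\sum_{\mu}a_\mu\theta_\mu$ with $a_\mu\in\bbc$ and $z$ $W$-invariant, one expands $e_K\eta_B(z)$ in the $T_w\theta_\mu$-basis and reads off that the coefficient of $\theta_\mu$ (the $w=e$ part, after moving the lower-order correction terms from Theorem~\ref{thm:bzbasis}(c) into strictly lower translation length) forces $P_W(q)^{-1}a_\mu=0$ for the top-degree $\mu$, and then downward induction on the length of $\mu$ gives $a_\mu=0$ for all $\mu$, i.e. $z=0$. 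Equivalently, and more cleanly, one notes that $e_K$ acts on $\bH(G,B)$ as a nonzero-scalar-times-identity idempotent relative to the left regular representation and the Bernstein basis makes the map $\bZ\to e_K\bH(G,B)e_K$ a restriction of the faithful left action of $\bZ$ on $\bH(G,B)$; faithfulness of that action gives injectivity.

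The main obstacle is the injectivity step: everything else (homomorphism property, surjectivity via (\ref{eq:mac}) and the Cartan decomposition) is essentially formal once Theorems~\ref{thm:bzbasis} and~\ref{thm:cent} are in hand, but to see that $z\mapsto e_K\eta_B(z)$ does not collapse $\bZ$ one genuinely needs to use the module structure — either the explicit triangularity of $e_K\theta_\mu$ in the Bernstein--Zelevinski basis coming from Theorem~\ref{thm:bzbasis}(c), or the faithfulness of the action of the commutative subalgebra $\bA$ (hence of $\bZ=\bA^W$) on $\bH(G,B)$, which is where the real content sits. A clean way to package this is: $\bH(G,B)$ is free as a left $\bA$-module on $\{T_w:w\in W\}$ (Theorem~\ref{thm:bzbasis}(a)), so $\bZ=\bA^W$ acts faithfully, and restricting this faithful action to the summand $\bZ\cdot e_K\subset \bH(G,B)$ yields that $z\mapsto e_K z=z e_K$ is injective on $\bZ$; combined with $\eta_B^{-1}$ this proves $\epsilon$ is injective, and with the surjectivity above, $\epsilon$ is an isomorphism of algebras.
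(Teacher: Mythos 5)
Your proposal is correct and follows essentially the same route as the paper: the inclusion $\bH(G,K)\subset e_K\bZ$ via (\ref{eq:mac}), the obvious reverse inclusion, and injectivity/surjectivity of $\epsilon$ from the Bernstein--Zelevinski basis of Theorem \ref{thm:bzbasis} together with the injectivity of $\eta_B$ in (\ref{eq:bern}). Two small remarks: in the injectivity step no commutation via Theorem \ref{thm:bzbasis}(c) is needed, since $e_K\eta_B(z)=P_W(q)^{-1}\sum_{w,\mu}a_\mu T_w\theta_\mu$ is already an expansion in the basis of part (a), and the left $\bA$-module freeness on $\{T_w\}$ is part (b) rather than (a) (either suffices, as $z$ is central).
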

\begin{proof}
By (\ref{eq:mac}) and Theorem \ref{thm:cent} we see that $\bH(G,K)\subset e_K\bfz$, while the opposite inclusion is obvious.
By Theorem \ref{thm:cent} we have $\bfz=\eta_B(\bH(A,A\cap B)^W)\subset \bA$, and $e_K=P_W(q)^{-1}\sum_{w\in W} T_w$. Hence $\gep$ is an 
isomorphism by Theorem \ref{thm:bzbasis} and (\ref{eq:bern}). 
\end{proof}

Let $t\in W\backslash T$. Let $\bA:=\eta_B(\bH(A,A\cap B))\simeq \bbc[L]\subset \bH(G,B)$ and consider the induced representation  
\begin{equation}
\pi_t=\textup{Ind}_\bA^{\bH(G,K)}\bbc_t=\bH(G,B)\otimes _\bA\bbc_t=\bH(K,B)\otimes\bbc_t. 
\end{equation}
This defines a family of $\bH(K,B)$--spherical
representations with one dimensional space of spherical vectors generated by $e_K\otimes \bbc_t$.  
\begin{df}
Define the functional 
$\go_t\in\textup{Hom}_{alg}(\bH(G,K),\bbc)$ by
\begin{equation}\label{eq:trace}
\tau(\go_t f)=(\go_t,f^*):=\textup{Tr}(\pi_t(f))
\end{equation}  
for any $f\in\bH(G,K)$ (cf. \cite[Section 2]{HO}).
\end{df}
Then $\go_t$ is a linear functional on $\bH(G,K)$, and can be expressed as a formal sum of the elements $\theta_\gl^+$ 
(with $\gl\in L_+$). 
Since the space of spherical vectors in $\textup{Ind}_\bA^{\bH(G,K)}\bbc_t$ is one dimensional, $\go_t$ is an eigenfunction of $\bH(G,K)$, 
and thus a zonal spherical function. Indeed, we see that for all $f\in\bH(G,K)$: 
\begin{equation}\label{eq:satiso}
f\go_t=(\go_t,f^*)\go_t
\end{equation}
In fact, $\go_t$ is the zonal spherical function as in (\ref{eq:sphft}), which Satake associated to $t$. 

Macdonald's Theorem on the explicit form of $ \go_t$ is now a simple consequence of (\ref{eq:mac}):
\begin{thm}\label{thm:mac}
For $\gl\in L_+$ we have 
\begin{equation}\label{eq:sphfu}
(\go_t,\theta^*_\gl)=\frac{1}{P_W(q^{-1})}\sum_{w\in W}c(wt,q)wt(\gl)
\end{equation}
or equivalently
\begin{equation}\label{eq:macsway}
\go_t(\pi^{-\gl})=\frac{\delta^{-1/2}(\pi^\gl)}{P_W(q^{-1})}\sum_{w\in W}c(wt,q)wt(\gl)
\end{equation}
\end{thm}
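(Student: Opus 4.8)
The strategy is to compute the trace $\textup{Tr}(\pi_t(\theta_\gl^+))$ in two ways and compare. On the one hand, by the definition \eqref{eq:trace} of $\go_t$ together with the spherical eigenfunction relation \eqref{eq:satiso}, pairing $\go_t$ against $\theta_\gl^* = T_{w_0}\theta_{-w_0(\gl)}T_{w_0}^{-1}$ (Proposition \ref{prop:star}) reduces the left-hand side of \eqref{eq:sphfu} to $(\go_t,\theta_\gl^*) = \textup{Tr}(\pi_t(T_{w_0}\theta_{-w_0(\gl)}T_{w_0}^{-1})) = \textup{Tr}(\pi_t(\theta_{-w_0(\gl)}))$, using that trace is conjugation-invariant. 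So it suffices to show $\textup{Tr}(\pi_t(\theta_\mu)) = P_W(q^{-1})^{-1}\sum_{w\in W} c(wt,q)\,wt(-w_0\mu)$ for $\mu = -w_0(\gl)$, or equivalently, after reindexing the sum over $W$ (replacing $w$ by $w w_0$ and using $W$-invariance of the expression), to evaluate $\textup{Tr}(\pi_t(\theta_\mu))$ directly.

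\textbf{Key steps.} First I would exploit the tensor decomposition $\pi_t = \bH(G,B)\otimes_{\bA}\bbc_t = \bH(K,B)\otimes\bbc_t$ from the vector-space identification in Theorem \ref{thm:bzbasis}: as a $\bbc$-vector space $\pi_t$ has basis $\{T_w\otimes 1\mid w\in W\}$. Next, I would use the Bernstein--Zelevinski presentation, and specifically the commutation relation Theorem \ref{thm:bzbasis}(c), to move $\theta_\mu$ past the $T_w$: for each $w\in W$, one has $\theta_\mu T_w = \sum_{v\le w} T_v \cdot (\text{rational function in the }\theta^A) $, where the rational coefficients are obtained by iterating (c). Applying $\pi_t$, the $\theta^A$-part acts by the scalar given by evaluation at $t$, so $\theta_\mu(T_w\otimes 1) = \sum_{v} (\text{rational function})(t)\, (T_v\otimes 1)$. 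The diagonal entries $v=w$ contribute to the trace; summing the leading terms of the iterated (c)-relations over $w\in W$ produces exactly the $W$-symmetrization $\sum_{w\in W} c(wt,q)\, wt(\mu)$ up to the normalization $P_W(q^{-1})^{-1}$. Rather than carrying out this bookkeeping by hand, the cleanest route is to invoke \eqref{eq:mac}: the identity $\theta_\mu^+ = e_K P_W(q^{-1})^{-1}\eta_B(\sum_{w\in W} w(c(\cdot,q)\theta^A_\mu))$ already packages precisely this symmetrization. Feeding $\theta_\mu^+ = e_K\theta_\mu e_K$ into the trace and using that $e_K$ acts as the projection onto the (one-dimensional) space of $\bH(K,B)$-spherical vectors $\bbc\cdot(e_K\otimes 1)$ in $\pi_t$, the trace collapses to the single matrix entry of $\eta_B(\sum_w w(c(\cdot,q)\theta^A_\mu))$ on $e_K\otimes 1$, which is $\sum_{w\in W} c(wt,q)\, wt(\mu)$ evaluated at $t$. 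Dividing by $P_W(q^{-1})$ and tracking \eqref{eq:satiso} gives \eqref{eq:sphfu}. Finally, \eqref{eq:macsway} follows from \eqref{eq:sphfu} by the change of variables $\theta^A_\gl \leftrightarrow \pi^\gl e_{A\cap B}$ and the relation between the $*$-operation and the Haar modulus $\delta$: unwinding $\go_t(\pi^{-\gl})$ in terms of $(\go_t,\theta^*_\gl)$ introduces exactly the factor $\delta^{-1/2}(\pi^\gl)$.

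\textbf{Main obstacle.} The genuine content is the reduction of the trace of $\theta_\mu^+$ to a single matrix coefficient, i.e.\ understanding precisely how $e_K$ acts as a rank-one idempotent on $\pi_t$ and identifying the surviving entry with the $c$-function sum. This rests on the one-dimensionality of the $\bH(K,B)$-spherical vectors in $\pi_t$ (noted after the definition of $\pi_t$) together with the explicit formula \eqref{eq:mac}; once \eqref{eq:mac} is granted, the remaining manipulations — conjugation-invariance of the trace, the $*$-formula of Proposition \ref{prop:star}, and the $W$-reindexing — are formal. Thus the proof is genuinely ``a simple consequence of \eqref{eq:mac}'' as asserted, and the only care needed is in the normalizations ($P_W(q)$ versus $P_W(q^{-1})$, and the $\delta^{1/2}$ twist built into the Satake map \eqref{eq:sat}) when passing between the two equivalent forms of the answer.
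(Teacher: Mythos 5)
Your central computation — keep the idempotents, write $(\go_t,\theta^*_\gl)=\tau(\go_t\theta^+_\gl)=\textup{Tr}\,\pi_t(\theta^+_\gl)$, use that $e_K$ has rank one on $\pi_t$, and read off the spherical matrix coefficient from \eqref{eq:mac} — is exactly the paper's proof of \eqref{eq:sphfu}. But your opening reduction is wrong and must be deleted rather than repaired. The defining relation \eqref{eq:trace} identifies $\tau(\go_t f)$ with $\textup{Tr}\,\pi_t(f)$ only for \emph{spherical} $f\in\bH(G,K)$; for a general $f\in\bH(G,B)$ one has $\tau(\go_t f)=\tau(\go_t\,e_Kfe_K)=\textup{Tr}\,\pi_t(e_Kfe_K)$, which is the compression to the spherical line, not the full trace. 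Hence $(\go_t,\theta^*_\gl)\neq\textup{Tr}\,\pi_t(\theta_{-w_0\gl})$ in general: already for $\gl=0$ the left side is $(\go_t,e_K)=\textup{Tr}\,\pi_t(e_K)=1$ (consistent with \eqref{eq:sphfu}, since $\sum_{w}c(wt,q)=P_W(q^{-1})$ by Macdonald's Poincar\'e series identity), whereas $\textup{Tr}\,\pi_t(\theta_0)=|W|$. For the same reason your proposed bookkeeping with Theorem \ref{thm:bzbasis}(iii) cannot produce \eqref{eq:sphfu}: for generic $t$ the $\bA$-weights of $\pi_t$ are exactly $wt$, $w\in W$, so $\textup{Tr}\,\pi_t(\theta_\mu)=\sum_{w\in W}wt(\mu)$ with no $c$-factors at all; the $c$-functions appear only after compression by $e_K$, and that is precisely what \eqref{eq:mac} packages. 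So the ``cleanest route'' is not an optional shortcut — it is the only route among the two you sketch that gives the stated formula.

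The passage to \eqref{eq:macsway} is also underjustified. The factor $\delta^{-1/2}(\pi^\gl)$ does not come from the $*$-operation or from the twist in the Satake map \eqref{eq:sat}; it comes from the identification of $\theta^+_\gl$ with the bi-$K$-invariant indicator function, namely $\chi_{K\pi^\gl K}=\delta^{1/2}(\pi^\gl)\,\theta^+_\gl/(\theta^+_\gl,\theta^+_\gl)$ (the paper cites \cite[Proposition 2.32]{EO} for this). Granting it, one expands $\go_t=\sum_{\mu\in L_+}\go_t(\pi^\mu)\chi_{K\pi^\mu K}$ and computes $(\go_t,\theta^*_\gl)=(\go_t,\theta^+_{-w_0\gl})=\go_t(\pi^{-w_0\gl})\delta^{1/2}(\pi^\gl)=\go_t(\pi^{-\gl})\delta^{1/2}(\pi^\gl)$, using Proposition \ref{prop:star} together with $e_KT_{w_0}=q(w_0)e_K$ and $T_{w_0}^{-1}e_K=q(w_0)^{-1}e_K$ for the first equality, and the $W$-invariance of $\go_t$ on $L$ for the last; combined with \eqref{eq:sphfu} this is \eqref{eq:macsway}. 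Without the explicit relation between $\theta^+_\gl$ and $\chi_{K\pi^\gl K}$ your ``change of variables'' argument does not pin down the normalization.
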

\begin{proof}
Using (\ref{eq:mac}) we see easily that (\ref{eq:sphfu}) follows from the definition of $ \go_t$ as in (\ref{eq:trace}).
Macdonald's formula \cite[Thm 4.1.2]{Ma2} follows from the remark that the characteristic function of the double 
orbit $K\pi^\gl K$ is given by $\chi_{K\pi^\gl K}:=\frac{\delta^{1/2}(\pi^\gl) \theta^+_\gl}{(\theta^+_\gl,\theta^+_\gl)}$ (cf. \cite[Proposition 2.32]{EO} and its proof).
Hence we can write $ \go_t=\sum_{\gl\in L_+} \go_t(\pi^\gl)\chi_{K\pi^\gl K}$, and 
\begin{align*}
(\go_t,\theta_\gl^*)&=(\go_t,\theta_{-w_0\gl}^+)\\
&=\go_t(\pi^{-w_0\gl})\delta^{1/2}(\pi^\gl)\\
&=\go_t(\pi^{-\gl})\delta^{1/2}(\pi^\gl)\\
&=\frac{1}{P_W(q^{-1})}\sum_{w\in W}c(wt,q)wt(\gl)
\end{align*}
and the result for $\go_t(\pi^{-\gl})$ follows easily (this proof is in spirit an algebraic version of Casselman's 
proof in \cite{Cas2}).
\end{proof}
Satake's result on $\gc_S$ follows too, and we even find that $\gc_S$ is a very simple and explicit isomorphism:
\begin{thm}\label{thm:macsat}
The inverse of $\gc_S$ (cf. (\ref{eq:sat})) is the isomorphism $\gep$ of Corollary \ref{eq:sphcent}. 
\end{thm}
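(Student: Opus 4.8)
The statement to prove is that the algebra isomorphisms $\gc_S\colon\bH(G,K)\to\bbc[L]^W$ (Satake's theorem) and $\gep\colon\bbc[L]^W\to\bH(G,K)$ (Corollary~\ref{eq:sphcent}) are mutually inverse. Since both are already known to be isomorphisms, it suffices to establish a single composition law, say $\gc_S\circ\gep=\mathrm{id}_{\bbc[L]^W}$. The plan is to evaluate $\gc_S(\gep(z))$ for $z\in\bbc[L]^W$ by computing a spherical Fourier transform, exploiting that $\gep$ factors through the central subalgebra $\bZ=\eta_B(\bbc[L]^W)$ of $\bH(G,B)$ (Theorem~\ref{thm:cent}).

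First I would observe that for $z\in\bbc[L]^W$ the element $\eta_B(z)$ is central in $\bH(G,B)$ and lies in $\bA$ (Theorem~\ref{thm:cent}), so in the induced module $\pi_t=\bH(G,B)\otimes_\bA\bbc_t$ it sends every vector $x\otimes v$ to $\eta_B(z)x\otimes v=x\,\eta_B(z)\otimes v=x\otimes\eta_B(z)v$, i.e.\ it acts by the single scalar $\chi_z(t)$ by which $\eta_B(z)$ acts on $\bbc_t$ — the value of $z$ at the Satake parameter $t$. Since $\gep(z)=e_K\eta_B(z)$ and $\pi_t(e_K)$ is a rank-one idempotent, its image being the line $\bbc\,(e_K\otimes 1_t)$ of $\bH(K,B)$-spherical vectors, we get $\pi_t(\gep(z))=\chi_z(t)\,\pi_t(e_K)$, hence $\tr\pi_t(\gep(z))=\chi_z(t)$. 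By~(\ref{eq:trace}) this gives $(\go_t,\gep(z)^*)=\chi_z(t)$; by the definition of the spherical Fourier transform together with~(\ref{eq:satiso}), $\Cf^G_{sph}(\gep(z))(t)=\chi_z(t)$; and then by~(\ref{eq:abft}), $\Cf^A_{sph}(\gc_S(\gep(z)))(t)=\chi_z(t)$ for all $t\in T$. Finally, on the abelian group $A$ the spherical Fourier transform is, under the tautological identification $\bbc[L]=\bbc[T]$, just evaluation of a regular function at the points of $T$; comparing, $\gc_S(\gep(z))$ and $z$ are regular functions on the torus $T$ with the same values everywhere, hence equal in $\bbc[L]^W$. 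A parallel, more explicit route runs through~(\ref{eq:mac}): that formula reads exactly $\gep(m_\gl)=\theta^+_\gl$ with $m_\gl=P_W(q^{-1})^{-1}\sum_{w\in W}w(c(\cdot,q)\theta^A_\gl)$, the $m_\gl$ ($\gl\in L_+$) form a basis of $\bbc[L]^W$ (orbit sums plus strictly lower terms), and Macdonald's formula (Theorem~\ref{thm:mac}) together with $(\theta^+_\gl)^*=\theta^+_{-w_0\gl}$ and~(\ref{eq:abft}) identifies $\gc_S(\theta^+_\gl)$ with $m_\gl$, so that $\gep\circ\gc_S$ is the identity on the basis $\{\theta^+_\gl\}$ of $\bH(G,K)$.

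The one delicate point — and the place where a slip is easiest — is the bookkeeping of normalizations: the half-powers $q(t_\gl)^{-1/2}$ built into $\theta_\gl$, the factors $\delta^{1/2}(\pi^\gl)$ relating $\theta^+_\gl$ to the characteristic functions of the double cosets $K\pi^\gl K$, and above all the two inversions $t\leftrightarrow t^{-1}$ introduced by the $g\mapsto g^{-1}$ in the definitions of $\Cf^G_{sph}$ and $\Cf^A_{sph}$. One must verify that these cancel, so that $\gc_S\circ\gep$ is honestly the identity rather than the identity twisted by $t\mapsto t^{-1}$ (or by $-w_0$); but they are all already forced by the compatibility of~(\ref{eq:sat}),~(\ref{eq:abft}) and~(\ref{eq:trace}) with the integral formula for $\go_t$ recorded earlier, so the verification is purely formal and requires no new analytic input.
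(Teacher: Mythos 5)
Your proposal is correct, and your primary argument takes a genuinely different route from the paper's. The paper proves the statement by computing $\gc_S$ on the basis $\{\theta^+_\gl\}_{\gl\in L_+}$ of $\bH(G,K)$: it combines (\ref{eq:abft}), (\ref{eq:satiso}) and Macdonald's explicit formula (Theorem \ref{thm:mac}) to get $\gc_S(\theta^+_\gl)=P_W(q^{-1})^{-1}\sum_{w}w(c(\cdot,q)\theta^A_\gl)$, and then concludes by comparing with (\ref{eq:mac}) -- this is precisely your ``parallel, more explicit route''. Your main argument instead runs the composition the other way and is softer: since $\gc_S$ and $\gep$ are already isomorphisms, it suffices to check $\gc_S\circ\gep=\mathrm{id}$, and this you extract from the centrality of $\eta_B(z)$ (Theorem \ref{thm:cent}), which forces it to act by the scalar $z(t)$ on $\pi_t$, together with the one-dimensionality of the spherical vectors, the trace definition (\ref{eq:trace}), and the identification of $\Cf^A_{sph}$ with evaluation on $T$; no explicit knowledge of the $c$-function is needed. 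What each approach buys: the paper's computation yields the explicit inversion formula (\ref{eq:satinv}) for $\gc_S$ as a byproduct (which is really the point of that section), while yours isolates the structural reason the two maps are mutually inverse and would survive in settings where no closed formula for $\go_t$ is available. You are also right to flag the one genuine danger in your route, a possible twist by $t\mapsto t^{-1}$ (equivalently $-w_0$) coming from the $g\mapsto g^{-1}$ in the transforms; this is a convention check of exactly the same nature as what the paper leaves implicit in its final step, since there too $\Cf^A_{sph}$ is silently identified with evaluation of the regular function $\sum_w w(c(\cdot,q)\theta^A_\gl)$ at $t$, so your argument is not weaker than the paper's on this point.
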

\begin{proof}
It is enough to prove this for the basis elements $\theta_\gl^+$ ($\gl\in L_+$) of $f\in\bH(G,K)$. 
By equation (\ref{eq:abft}) and (\ref{eq:satiso}), we see that $\Cf^A_{sph}(\gc_S(\theta^+_\gl))(t)=\Cf^G_{sph}(\theta^+_\gl)(t)=(\go_t,\theta_\gl^*)$, 
so Theorem \ref{thm:mac} implies that: 
\begin{equation}\label{eq:satinv}
\gc_S(\theta^+_\gl)=\frac{1}{P_W(q^{-1})}\sum_{w\in W}w(c(\cdot,q)\theta^A_\gl)
\end{equation}
We conclude that $\gep(\gc_S)=\textup{id}$ by (\ref{eq:mac}). 
\end{proof}
\begin{remark}\label{rem:extber}
Observe that Bernstein's embedding $\eta_B$ can be extended to an isomorphism $\eta_B:\mathbf{H}(L,W;q)\to \mathbf{H}(G,B)$ 
of affine Hecke algebras, where $W^e=L\rtimes W$ (the Bernstein--Zelevinsky presentation  of $\mathbf{H}(G,B)$).
\end{remark}
Next we will consider a real analog of all of this. 

\section{Graded Hecke algebras and Dunkl operators}


We return to the world of real reductive groups and Riemannian symmetric spaces. We will introduce in some
sense an extended Bernstein isomorphism in this context, mapping a {\it graded affine} Hecke algebra to certain
differential {\it reflection} operators on $A\subset \textup{exp}(\fp)$. 

In Section \ref{sec:HCH}, we considered a
non--compact Riemannian symmetric space $X=G/K$ with Cartan decomposition $G=KAK$, and extracted
several data and structures on $A$ which capture the geometry of $X$.
In the present Section, we will start
more generally with the introduction of such structures on a real vector group $A$, but not necessarily coming
from a symmetric space $X$. 

Set {$\fa=\textup{Lie}(A)$}, and equip $\fa$ with a Euclidean structure. Let {$R\subset \fa^*$} be a (possibly
non--reduced) root system, and $R^\vee\subset \fa$ the coroot system. Let $Q=\bbz R\subset \fa^*$ be the
root lattice, and $Q^\vee=\bbz R^\vee\subset \fa$ the coroot lattice with dual weight lattice $Q\subset P=
\Hom(Q^\vee,\bbz)\subset \fa^*$. Let $T_c=\Hom(P,\bbc^\times)\simeq \ft/2\pi i Q^\vee$ be the complex
algebraic torus with character lattice $P$ and Lie algebra $\mathfrak{t}:=\bbc\otimes_\bbr\fa$. $T_c=T_uA$
for the polar decomposition, where 
\begin{equation}\label{eq:polar T}
T_u=\Hom(P,S^1)\aand A=\Hom(P,\bbr_{>0})
\end{equation}
are the corresponding compact torus and real vector group respectively.

\begin{remark}
Let us explain the comparison with the case of a Riemannian symmetric space $X=G/K$. 
Here we think of $A$ as a maximal flat subspace of $X$, 
not as the maximal split Cartan $A_G=\textup{exp}(\fa)\subset \textup{exp}(\fp)\subset G$.  
In the complexification $G_c$ of $G$, $A_{G,c}\cap K_c$ is the $2$-torsion subgroup of $A_{G,c}$ (the elements $a\in A_{G,c}$ 
such that $\theta(a)=a=a^{-1}$, where $\theta$ is the (holomorphic) Cartan involution), hence the isogeny $A_{G,c}\to A_{G,c}$, $a\to a^2$ 
descends to an isomorphism $T_c=A_{G,c}/(A_{G,c}\cap K_c)\to A_{G,c}$. Therefore, the root system $R\subset \fa^*$ identifies with $R=2\Phi(\fg,\fa)$. 
Also observe that the natural isomorphism from $A$ to $A_G$ is given by $a\to a^2$, and this is the map we use to identity these two vector groups. 
Note that $P=2P_\Phi$ and $A_{G,c}=\textup{Hom}(P_\Phi,\bbc^\times)$. 
\end{remark}

For {$\mu \in P$}, the {corresponding character} in $\Hom(T,\bbc^\times)$ is denoted by {$t^{\mu}\in \Hom(T,\bbc^\times)$}.
The ring of regular functions $\bbc[T]$ on $T$ is isomorphic to the group algebra $\bbc[P]$ of $P$ via the Fourier transform, which we often use to identify 
these two with the group algebras.  The Weyl group $W$ of $R$ acts on everything in sight. 
Choose a Weyl chamber $\fa^*_+$, let $\{s_1,\dots , s_n\}\subset W$ be the corresponding set of simple reflections.
Put $P_+:=P\cap \overline{\fa^*_+}$, the monoid of dominant weights.  

\subsection{Dunkl--Cherednik operators on $\bbc[T]$}
Let 
\begin{align}
\mathcal{K}&=\left\{k=\{k_{\alpha}\}\in\bbc^R\mid k_{w\alpha}=k_{\alpha},\,w \in W,\alpha \in R\right\}
\label{eq:cK}\\
\Ck_+&=\{k\in\Ck\mid k_\ga\geq0\,\,\alpha \in R\}
\end{align}
be the space of {root multiplicity parameters} and positive root multiplicity parameters, respectively.

Let $\Cd(T^{reg})[W]$ be the ring of {\it differential--reflection} operators on $T^{reg}$. Following I.
Cherednik \cite{C}, we define the operators $T(\xi,k)\in\Cd(T^{reg})[W]$, $\xi\in\ft$, $k\in\mathcal{K}$
by
\[T(\xi,k)
=\partial(\xi)+\sum_{\alpha>0}k_{\alpha}\alpha(\xi)(1-t^{-\alpha})^{-1}(1-s_{\alpha})-\rho(k)(\xi)\]
where $\partial(\xi)t^{\mu}=\mu(\xi)t^{\mu}$, and $\rho(k)\in\ft^*$ is given by $\tfrac12 \sum_{\alpha
>0}k_{\alpha}\alpha\in\mathfrak{t}^{\ast}$.
Despite the denominators in the expression of $T(\xi,k)$, we have that $T(\xi,k)|_{\bbc[T]}\in\textup
{End}(\bbc[T])$ and $T(\xi,k)|_{C^\infty_c(A)}\in\textup{End}(C^\infty_c(A))$. A crucial property is: 
\begin{thm}(I. Cherednik \cite{C}, G.J. Heckman \cite{OGHA})
The operators $T(\xi,k)$ commute: for all $\xi,\eta\in\ft$ we have $[T(\xi,k),T(\eta,k)]=0$.
\end{thm}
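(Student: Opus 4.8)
\medskip
\noindent\textbf{Proof proposal.} The plan is to pass to the polynomial module $\bbc[T]=\bbc[P]$, exhibit a single basis of $\bbc[P]$ consisting of joint eigenvectors for the whole family $\{T(\xi,k):\xi\in\ft\}$, and deduce commutativity from that. One preliminary reduction: an operator $D=\sum_{w\in W}D_w w\in\Cd(T^{reg})[W]$, with the $D_w$ differential operators having coefficients regular on $T^{reg}$, that kills every monomial $t^\mu$ ($\mu\in P$) must be $0$ — applying $D$ to $t^\mu$ for $\mu$ regular separates the distinct exponentials $t^{w\mu}$, and for each $w$ the resulting coefficient is a polynomial in $\mu$ with rational coefficients vanishing on a Zariski-dense set of weights, hence identically. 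Since $T(\xi,k),T(\eta,k)$ preserve $\bbc[P]$, it therefore suffices to prove $[T(\xi,k),T(\eta,k)]\,f=0$ for all $f\in\bbc[P]$; and since this commutator depends polynomially on $k\in\Ck$, it suffices to treat $k$ in a Zariski-dense subset of $\Ck$.

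Two structural inputs are needed. \emph{(a) Triangularity.} There is a partial order $\preceq$ on $P$ — the Cherednik order, for which $\nu\preceq\mu$ when $\nu^+<\mu^+$ in the dominance order, or $\nu^+=\mu^+$ and $\nu$ lies on the segment from $\mu$ toward the dominant chamber — that is independent of $\xi$ and $k$ and whose lower sets $\{\nu:\nu\preceq\mu\}$ are finite, and with respect to which $T(\xi,k)t^\mu\in\bbc^\times t^\mu+\mathrm{span}\{t^\nu:\nu\prec\mu\}$; this is the routine evaluation of $(1-t^{-\alpha})^{-1}(1-s_\alpha)t^\mu$ (equal to $t^\mu+t^{\mu-\alpha}+\dots+t^{s_\alpha\mu}$, or its negative, according to the sign of $\langle\mu,\alpha^\vee\rangle$) together with $\partial(\xi)t^\mu=\mu(\xi)t^\mu$. \emph{(b) Self-adjointness.} On $\bbc[P]$ put the pairing $\langle f,g\rangle_k:=[\delta_k\,f\,g^\vee]_0$, the constant term of $\delta_k(t)f(t)g(t^{-1})$, where $\delta_k:=\prod_{\alpha>0}(1-t^\alpha)^{k_\alpha}(1-t^{-\alpha})^{k_\alpha}$ and $g^\vee(t):=g(t^{-1})$ (for $k$ in the interior of $\Ck_+$ this is $\int_{T_u}\delta_k\,f\,g^\vee\,dt$ against normalized Haar measure, hence well defined); it is nondegenerate for generic $k$, since its Gram matrix on $\{t^\mu\}$ is the identity at $k=0$. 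I claim $T(\xi,k)^\dagger=T(\xi,k)$. Indeed $s_\alpha^\dagger=s_\alpha$ (the constant-term functional and $\delta_k$ are $W$-invariant), multiplication by $\varphi$ has adjoint multiplication by $\varphi^\vee$ — so $(1-t^{-\alpha})^{-1}$ has adjoint $(1-t^\alpha)^{-1}=1-(1-t^{-\alpha})^{-1}$, using $(1-t^{-\alpha})^{-1}+(1-t^\alpha)^{-1}=1$ — and integration by parts against $\delta_k$ gives $\partial(\xi)^\dagger=\partial(\xi)-\bigl(\delta_k^{-1}\partial_\xi\delta_k\bigr)^\vee$ with logarithmic derivative $\delta_k^{-1}\partial_\xi\delta_k=2\sum_{\alpha>0}k_\alpha\alpha(\xi)(1-t^{-\alpha})^{-1}-2\rho(k)(\xi)$. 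Summing the adjoints of the three types of terms in $T(\xi,k)$ and using $\sum_{\alpha>0}k_\alpha\alpha(\xi)=2\rho(k)(\xi)$, everything recombines back to $T(\xi,k)$ — and the Weyl-vector shift $-\rho(k)(\xi)$ in the definition of $T(\xi,k)$ is precisely what makes this work out.

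With (a) and (b) in hand the conclusion is formal. Apply Gram–Schmidt to the monomials $\{t^\mu\}$, ordered by any linear refinement of $\preceq$, with respect to $\langle\cdot,\cdot\rangle_k$; this produces a $\langle\cdot,\cdot\rangle_k$-orthogonal basis $\{E_\mu\}_{\mu\in P}$ of $\bbc[P]$ with $E_\mu\in t^\mu+\mathrm{span}\{t^\nu:\nu\prec\mu\}$. By (a) each $T(\xi,k)$ sends $E_\mu$ into $\mathrm{span}\{E_\nu:\nu\preceq\mu\}$; being self-adjoint for the pairing in which $\{E_\mu\}$ is orthogonal, it is then forced to be \emph{diagonal}, $T(\xi,k)E_\mu=\bar\mu(\xi)E_\mu$ for scalars $\bar\mu(\xi)$. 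Hence all of $\{T(\xi,k):\xi\in\ft\}$ is simultaneously diagonalized in the one basis $\{E_\mu\}$, so the operators commute on $\bbc[P]$ for generic $k$, hence for all $k\in\Ck$, hence in $\Cd(T^{reg})[W]$.

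The main obstacle is step (b): the self-adjointness $T(\xi,k)^\dagger=T(\xi,k)$ is a genuine computation in which the rational functions $(1-t^{\pm\alpha})^{-1}$, the identity $(1-t^{-\alpha})^{-1}+(1-t^\alpha)^{-1}=1$, and the $\rho(k)$-shift must all conspire exactly; the rest is bookkeeping. A second, purely computational route bypasses the inner product: expand $[T(\xi,k),T(\eta,k)]$ acting on $\bbc[P]$, collect terms by the Weyl element they carry, note that the coefficient of each $w\neq e$ involves only the rank-$\le 2$ subsystem spanned by the roots entering $w$, and verify the resulting finite list of trigonometric identities, one per rank-$\le 2$ root system ($A_1\times A_1$, $A_2$, $B_2$, $G_2$, $BC_2$) — the bottleneck there.
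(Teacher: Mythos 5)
The paper itself gives no proof of this theorem --- it is stated with citations to Cherednik \cite{C} and to \cite{OGHA} --- so your argument can only be compared with the literature. Your route (triangularity of $T(\xi,k)$ on $\bbc[T]$ with respect to a dominance-plus-orbit order; symmetry of $T(\xi,k)$ for the bilinear pairing $\langle f,g\rangle_k=\mathrm{CT}(\delta_k\,f\,g^\vee)$; Gram--Schmidt producing an orthogonal triangular basis $\{E_\mu\}$; and the formal remark that a triangular operator which is symmetric for a form in which that basis is orthogonal must be diagonal) is a genuine and essentially complete proof. Your step (b) is exactly Theorem \ref{thm:adjoint}(i) of this paper, and your $E_\mu$ are the nonsymmetric Jacobi polynomials $E(\mu,k)$ of Heckman's theorem quoted in Section 7; the virtue of your arrangement is that it is not circular, since commutativity is deduced rather than assumed before speaking of joint eigenfunctions. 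I checked the adjoint computation: with $\delta_k^{-1}\partial_\xi\delta_k=2\sum_{\alpha>0}k_\alpha\alpha(\xi)(1-t^{-\alpha})^{-1}-2\rho(k)(\xi)$ and $(1-t^{\alpha})^{-1}=1-(1-t^{-\alpha})^{-1}$ the three families of terms do recombine to $T(\xi,k)$, the $\rho(k)$-shift being exactly what is needed. This is a different mechanism from Cherednik's original proof (degenerate affine Hecke algebra relations and intertwiners, or the affine KZ connection); what your version buys is commutativity as a corollary of orthogonality, at the price of the genericity/analytic-continuation step in $k$, which you handle correctly since the commutator's matrix entries on monomials are polynomial in $k$.

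Three points need repair, none structural. First, your reduction lemma is misargued as stated: distinct characters $t^{w\mu}$ are \emph{not} linearly independent over the coefficient field $\bbc(T)$ (e.g. $t^{\nu_2-\nu_1}\cdot t^{\nu_1}-1\cdot t^{\nu_2}=0$), so one cannot ``separate the exponentials'' with rational coefficients; the standard fix is to evaluate at a fixed regular point $t_0$, let $\mu$ run over $P$, and use that the exponential-polynomial functions $\mu\mapsto p(\mu)\,(w^{-1}t_0)^\mu$ on the lattice, for distinct points $w^{-1}t_0$, are linearly independent; this gives the vanishing of each total symbol at every regular $t_0$, hence of each $D_w$. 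Second, the order in step (a) should be the one of Heckman's theorem (dominance on the orbits, Bruhat order on the $w^\mu$ within an orbit), not literally ``on the segment toward the dominant chamber''; also mind the string endpoints: for $\langle\mu,\alpha^\vee\rangle>0$ the expansion stops at $t^{s_\alpha\mu+\alpha}$, and $t^{s_\alpha\mu}$ itself occurs only when $\langle\mu,\alpha^\vee\rangle<0$, in which case the Bruhat convention is what guarantees $s_\alpha\mu\prec\mu$. (The leading coefficient need not be in $\bbc^\times$, but your argument never uses that.) Third, in the adjoint computation the individual pieces ($\partial(\xi)$ alone, multiplication by $(1-t^{\pm\alpha})^{-1}$) do not preserve $\bbc[T]$, so the term-by-term adjoints should be taken with the pairing realized as the integral over $T_u$ for $\mathrm{Re}(k_\alpha)>0$, where $\delta_k$ absorbs the poles; combined with polynomiality in $k$ this closes the argument for all $k$, as you indicate. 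With these repairs the proof is correct.
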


\subsection{Hypergeometric differential operators and $\bbd(X)$}
Using the Dunkl--Cherednik operators one can construct $W$--invariant {\it differential}
operators, the so-called hypergeometric operators. Extend the linear map $\xi\to T(\xi,k)$
to an algebra homomorphism
\begin{align*}
\eta_B: S(\ft)&\to \End(\bbc[T])\\
p&\to \eta_B(p)=:T(p,k)
\end{align*} 
Let $\mathbb{A}(W\backslash T)$ be the Weyl algebra of polynomial differential operators on the affine 
space $W\backslash T$. Then, the following holds

\begin{thm}
If $p\in S(\ft)^W$ then $T(p,k)$ is $W$--invariant. Its restriction to $\bbc[T]^W$ is a differential operator
$D(p,k)\in \mathbb{A}(W\backslash T)$ .
\end{thm}

The {\it hypergeometric differential operators} $D(p,k)$ were originally constructed in \cite{O4}, prior to to
the introduction of the operators $T(\xi,k)$. Whenever the parameters $k_\ga$ arise from a Riemannian
symmetric space $X$, they are the radial parts of the operators in $\bbd(X)$: 
\begin{thm}\label{thm:RSS}
 Let $X=G/K$ be a Riemannian symmetric space, and $\fa\subset \fp$ maximal abelian.  
Put {$R=2\Phi(\fg,\fa)$} and {$k_{\alpha}=\textup{dim}(\fg_{\alpha/2})/2$}.
For $p\in \bbc[\fa^*]^W$ recall that $\gamma_{HC}^{-1}(p)\in\bbd(X)$ is the $G$--invariant 
operator on $X$ corresponding to $p$ via the {Harish--Chandra homomorphism}. 
Then (analogously to $\gamma_{S}^{-1}(f)=\eta_B(f)e_K$):
\begin{equation*}
\textup{Rad}(\gamma_{HC}^{-1}(p))=\eta_B(p)|_{C^\infty(A)^W}{(=D(p,k))\in\mathbb{A}(W\backslash T).}
\end{equation*}
were {$\textup{Rad}(D)$} (with $D\in\bbd(X)$) denotes the {radial part} of $D$ on $A$.
\end{thm}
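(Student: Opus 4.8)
The plan is to reduce the statement to a purely algebraic comparison on the commutative side, exploiting that both $\textup{Rad}$ and $D(\cdot,k)$ are homomorphisms with the same leading symbol and that a $W$-invariant operator on $A$ with prescribed leading term and prescribed eigenvalues is uniquely determined. First I would recall, from Theorem~\ref{thm:HChom} and equation~(\ref{eq:hot}), that for $D\in\mathbb{D}(X)$ the radial part has the asymptotic expansion $\textup{Rad}(D)=\partial(\gamma'(D))+\sum_{\mu<0}e^\mu\partial(p^D_\mu)$ with strictly lower-order corrections, and that $\gamma(D)=\gamma'(D)$ translated by $\rho$. On the Dunkl--Cherednik side, $D(p,k)$ is by construction the restriction to $\bbc[T]^W$ of the $W$-invariant operator $T(p,k)=\eta_B(p)$, and the leading term of $T(\xi,k)$ is $\partial(\xi)-\rho(k)(\xi)$ while the correction terms all carry a factor $(1-t^{-\alpha})^{-1}$ which expands into strictly negative powers $e^\mu$, $\mu<0$. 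Hence both operators have the shape ``$\partial$ of a polynomial, plus lower-order terms supported on $\mu<0$,'' and I would observe that $k_\alpha=\dim(\fg_{\alpha/2})/2$ makes $\rho(k)$ coincide with the Weyl vector $\rho$ occurring in Harish--Chandra's setup (this is precisely the normalisation $R=2\Phi$, $k_\alpha=\tfrac12 m_{\alpha/2}$ already used before equation~(\ref{eq:spheq})).

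Next I would pin down the eigenfunctions. The elementary spherical function $\varphi_\lambda$ restricted to $A$ satisfies $\textup{Rad}(D)\textup{Res}(\varphi_\lambda)=\gamma(D)(\lambda)\textup{Res}(\varphi_\lambda)$ by~(\ref{eq:spheq}). I would show the analogous statement for the hypergeometric system: the $W$-invariant function on $T^{reg}$ annihilated by $T(p,k)-p(\lambda)$ for all $p\in S(\ft)^W$ (equivalently the spectral vector for the commuting family $T(\xi,k)$ with eigenvalue $\lambda$, symmetrised over $W$) is, for generic $\lambda$, the unique $W$-invariant solution of the hypergeometric system with leading exponent $a^{\lambda-\rho(k)}$ normalised to $1$ at the identity — this is the hypergeometric function $F(\lambda,k;a)$ of~\cite{O4},~\cite{OGHA}. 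The point is that both $\textup{Rad}(\gamma_{HC}^{-1}(\cdot))$ and $D(\cdot,k)$ are algebra homomorphisms into $\mathbb{A}(W\backslash T)$, injective (for $\textup{Rad}$ this is stated explicitly in the excerpt), sharing the same source $\bbc[\fa^*]^W\cong\bbc[T]^W$ and the same symbol map $p\mapsto\partial(p)+\text{(lower order)}$, and producing for each $\lambda$ an eigenvalue $p(\lambda)$ after the $\rho$-shift. A differential operator on $A$ that is $W$-invariant, has leading term $\partial(p)$, has only correction terms in powers $e^\mu$ with $\mu<0$, and acts on the family $\{a^{\lambda-\rho}(1+\text{l.o.t.})\}_{\lambda}$ by the scalar $p(\lambda-\rho+\rho)=p(\lambda)$ is uniquely determined: subtracting two such candidates gives an operator killing a spanning family of asymptotically free solutions, hence zero. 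Applying this with the candidates $\textup{Rad}(\gamma_{HC}^{-1}(p))$ and $D(p,k)$ yields the claimed equality for all $p$.

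A cleaner route, which I would actually follow in the write-up, is to avoid re-deriving the asymptotics of $D(p,k)$ and instead quote the known identification of the hypergeometric function with the spherical function in the symmetric-space parameters: under $R=2\Phi(\fg,\fa)$ and $k_\alpha=\dim(\fg_{\alpha/2})/2$ one has $F(\lambda,k;a)=\textup{Res}(\varphi_\lambda)(a)$ (this is the theorem of Heckman--Opdam matching the hypergeometric system with Harish--Chandra's system, and it is really the content of why these parameters were singled out). Granting this, for every $p\in\bbc[\fa^*]^W$ and generic $\lambda$ we get
\begin{equation*}
D(p,k)\,\textup{Res}(\varphi_\lambda)=p(\lambda)\,\textup{Res}(\varphi_\lambda)=\gamma(\gamma_{HC}^{-1}(p))(\lambda)\,\textup{Res}(\varphi_\lambda)=\textup{Rad}(\gamma_{HC}^{-1}(p))\,\textup{Res}(\varphi_\lambda),
\end{equation*}
where the last equality is~(\ref{eq:spheq}). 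Since $\{\textup{Res}(\varphi_\lambda):\lambda\ \text{generic}\}$ separates points densely enough that an element of $\mathbb{A}(W\backslash T)$ killing all of them vanishes (they span, for generic $\lambda$, a $|W|$-dimensional solution space whose union over $\lambda$ is Zariski-dense in the relevant function space), we conclude $D(p,k)=\textup{Rad}(\gamma_{HC}^{-1}(p))$ as operators.

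\textbf{Main obstacle.} The genuinely substantive input is the identification of the hypergeometric system with Harish--Chandra's system of radial-part equations under the stated parameter specialisation — equivalently, the statement that $\textup{Rad}(L_X)$ is the $k$-specialised hypergeometric Laplacian and, more importantly, that the \emph{full} commuting family matches. Matching the second-order operator is the explicit computation displayed just before~(\ref{eq:spheq}); matching the entire algebra requires knowing that $\gamma_{HC}$ and $D(\cdot,k)$ have the same image, which is the surjectivity half of Harish--Chandra's isomorphism together with the fact that $D(\cdot,k):\bbc[T]^W\to\mathbb{A}(W\backslash T)$ is injective with image the full algebra of $W$-invariant operators commuting with the hypergeometric Laplacian. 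Everything else — the symbol calculus, the $\rho$ versus $\rho(k)$ bookkeeping, the uniqueness-from-eigenfunctions argument — is routine once that dictionary is in place.
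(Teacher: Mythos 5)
The paper itself states Theorem \ref{thm:RSS} without proof (it is the classical Heckman--Opdam identification, cf. \cite{HO}, \cite{O4}, \cite{OGHA}, \cite{HecSch}), so the only question is whether your argument stands on its own; as written, neither of your two routes closes. The route you say you would actually follow --- quoting $F(\lambda,k;\cdot)=\textup{Res}(\varphi_\lambda)$ at the group parameters --- is circular: that identification is itself proved by showing that the radial parts of the operators in $\mathbb{D}(X)$ are the hypergeometric operators $D(p,k)$, i.e.\ it is equivalent to the statement being proved. There is no independent characterisation matching $F(\lambda,k;\cdot)$ with $\varphi_\lambda$ other than the coincidence of the two systems of differential equations, so you cannot ``grant'' it as input.

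Your first route has a genuine gap at the uniqueness step. You argue that two $W$-invariant operators with the same asymptotic constant term and corrections in strictly negative exponents, each acting ``by the scalar $p(\lambda)$ on the family $a^{\lambda-\rho}(1+\text{l.o.t.})$'', must coincide; but the asymptotically free solutions attached to the radial system and those attached to the hypergeometric system are a priori two \emph{different} families, and knowing that each operator has eigenvalue $p(\lambda)$ on its own family gives you nothing to subtract --- an operator of the form $\sum_{\mu<0}e^{\mu}\partial(p_\mu)$ certainly need not vanish. The standard fix, which you gesture at in your ``main obstacle'' paragraph but never carry out, is to anchor both families to a single operator: the explicit computation displayed before (\ref{eq:spheq}) shows $\textup{Rad}(L_X)=L_A+\sum_{\alpha\in R_+}k_\alpha\coth(\alpha/2)\partial_{\alpha^*}=D(p_2,k)$ with $p_2(\lambda)=(\lambda,\lambda)-(\rho(k),\rho(k))$ and $\rho(k)=\rho$. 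Since $\mathbb{D}(X)$ is commutative and the $D(p,k)$ commute among themselves, both $\textup{Rad}(\gamma_{HC}^{-1}(p))$ and $D(p,k)$ commute with this one second-order operator; for generic $\lambda$ its asymptotically free solutions $\Phi_\lambda$ are determined by the recursion from their leading exponent, so each of the two candidate operators preserves the line $\bbc\,\Phi_\lambda$ and acts on it by the scalar obtained by evaluating its asymptotic constant term at $\lambda-\rho$, which by (\ref{eq:hot}) and the $\rho$-shift bookkeeping is $p(\lambda)$ in both cases. Only now does the difference kill a family $\{\Phi_\lambda\}$ with generic leading exponents, and the leading-exponent induction forces it to vanish. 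Equivalently, one may invoke the commutant theorem of \cite{O4} (the $D(p,k)$ exhaust the $W$-invariant operators over the relevant coefficient ring commuting with the hypergeometric Laplacian), which is exactly the input you list as the obstacle; with it your symbol comparison does finish the proof, so what is missing is to promote that remark from an aside to an actual step of the argument.
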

The result of Theorem \ref{thm:RSS} should be compared with Theorem \ref{thm:macsat} for $p$--adic groups.
But notice that something is not quite right: The idempotent $e_K$ seems to be missing in the real version. 
This will be fixed shortly, the problem is caused by the the fact that $\bbd(X)$ is not quite the correct analog
of the spherical Hecke algebra $\bH(G,K)$. Rather, we should be looking at the spherical algebra 
of the Archimedean Hecke algebra $\ffR(\fg,K)$ which is isomorphic to $\bbd(X)$ but not quite the same. 
See Subsection \ref{sub:comp} for more details.

\subsection{The graded affine Hecke algebra}
We saw in Remark \ref{rem:extber} that $\eta_B$ in the $p$--adic version could naturally be extended to
an algebra homomorphism from an abstract affine Hecke algebra given in its Bernstein presentation to the
concrete Iwahori--Hecke algebra of $G$. In the present situation we do not have an "Iwahori--Hecke algebra"
available, so this would be too much to ask for. But we do have something to offer which - at least partially - 
restores the analogy. We will discuss this in the next two sections.
\begin{df}
The {graded affine Hecke algebra $\mathbb{H}=\mathbb{H}(R_+,k)$} is the unique associative algebra satisfying
\begin{itemize}
\item[(i)] $\mathbb{H}=\bbh(R_+,k):=S\mathfrak{t}\otimes\bbc[W]$ as a vector space over $\bbc$,
\item[(ii)] $S\mathfrak{t}\rightarrow\mathbb{H},\;p \mapsto p\otimes1$ and $\bbc[W]\rightarrow\mathbb{H},\;w\mapsto1\otimes w$
are algebra homomorphisms, and so we will identify $S\mathfrak{t}$ and $\bbc[W]$ with their images in $\mathbb{H}$ via these maps,
\item[(iii)] $p\cdot w=p\otimes w$ with $\cdot$ denoting the algebra multiplication in $\mathbb{H}$,
\item[(iv)] $s_i\cdot p-s_i(p)\cdot s_i=-k_i(p-s_i(p))/\alpha_i^{\vee}$ with w(p) the natural transform of $p\in S\mathfrak{t}$
under $w\in W$, and $k_i=\tfrac{1}{2}k_{\alpha_i/2}+k_{\alpha_i}$.
\end{itemize}
\end{df}
\begin{cor} 
The center {$Z(\mathbb{H})$} of $\mathbb{H}$ is isomorphic to {$S\mathfrak{t}^W$}.
In particular, $\bbh$ is finite over $Z(\bbh)$ and all its irreducible representations are
finite--dimensional.
\end{cor}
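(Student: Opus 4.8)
The plan is to prove the three assertions in sequence, the only real content being the identification $Z(\mathbb{H})=S\mathfrak{t}^W$; the finiteness statement and the finite-dimensionality of the irreducibles are then formal. Throughout I use the PBW decomposition $\mathbb{H}=\bigoplus_{w\in W}S\mathfrak{t}\cdot w$ supplied by (i) and (iii), and the fact that $W$ acts faithfully on $\mathfrak{t}$ (it does so already on $\operatorname{span}(R^\vee)$, its reflection representation), so that $S\mathfrak{t}$ is a domain on which no nontrivial $w\in W$ acts trivially. The inclusion $S\mathfrak{t}^W\subseteq Z(\mathbb{H})$ is immediate from relation (iv): if $p\in S\mathfrak{t}^W$ then $s_i(p)=p$, so (iv) collapses to $s_ip=ps_i$ for every simple reflection, and since the $s_i$ generate $W$ and $S\mathfrak{t}$ is commutative, $p$ commutes with the generating set $S\mathfrak{t}\cup W$ of $\mathbb{H}$.

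For the reverse inclusion the key step is to show $Z(\mathbb{H})\subseteq S\mathfrak{t}$. First I would record the length-triangularity of $S\mathfrak{t}$ inside $\mathbb{H}$: for all $p\in S\mathfrak{t}$ and $w\in W$,
\[
 w\,p\;\in\; w(p)\,w\;+\;\bigoplus_{\ell(v)<\ell(w)}S\mathfrak{t}\cdot v .
\]
This follows by induction on $\ell(w)$: writing $w=s_iw'$ with $\ell(w)=\ell(w')+1$ and using (iv) in the form $s_i\cdot(q\,v)=s_i(q)\,(s_iv)+\big(k_i(q-s_i(q))/\alpha_i^\vee\big)\,v$ for $q\in S\mathfrak{t}$, one sees that $s_i\cdot(-)$ carries $S\mathfrak{t}\cdot v$ into $S\mathfrak{t}\cdot(s_iv)+S\mathfrak{t}\cdot v$, whose indexing elements have length $\le\ell(v)+1$, and that the leading term $w'(p)\,w'$ of $w'p$ maps to $(s_iw')(p)\,w=w(p)\,w$ modulo terms of length $\le\ell(w)-1$. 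Now take $z=\sum_w p_w\,w\in Z(\mathbb{H})$ and let $w_0$ have maximal length among those $w$ with $p_w\neq0$. Comparing the coefficient of $w_0$ in $zp=pz$ for an arbitrary $p\in S\mathfrak{t}$: by the triangularity and the maximality of $\ell(w_0)$, the only contribution on the left comes from $p_{w_0}\cdot(w_0p)$, namely $p_{w_0}\,w_0(p)$, while on the right it is $p\,p_{w_0}$. Hence $p_{w_0}\,(w_0(p)-p)=0$ in the domain $S\mathfrak{t}$ for every $p\in S\mathfrak{t}$; if $w_0\neq e$ we may choose $p$ with $w_0(p)\neq p$, a contradiction. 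So $w_0=e$, i.e.\ $z=p_e\in S\mathfrak{t}$. Finally, for $p\in S\mathfrak{t}$ relation (iv) gives $ps_i-s_ip=(p-s_i(p))\,s_i+k_i(p-s_i(p))/\alpha_i^\vee$ with $(p-s_i(p))/\alpha_i^\vee\in S\mathfrak{t}$, so reading off the $S\mathfrak{t}\cdot s_i$-component shows $p$ commutes with $s_i$ precisely when $p=s_i(p)$; thus $S\mathfrak{t}\cap Z(\mathbb{H})=S\mathfrak{t}^W$, and $Z(\mathbb{H})=S\mathfrak{t}^W$.

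The remaining assertions are now formal. Since $\mathbb{H}=\bigoplus_wS\mathfrak{t}\cdot w$ is a left $S\mathfrak{t}$-module generated by the finite set $W$, and $S\mathfrak{t}$ is a finite module over $S\mathfrak{t}^W$ (Noether's finiteness theorem for invariants of finite groups, or the Chevalley theorem), $\mathbb{H}$ is a finite module over $Z(\mathbb{H})=S\mathfrak{t}^W$; in particular $\mathbb{H}$ is Noetherian. For a simple $\mathbb{H}$-module $V$: as $\mathbb{H}$, and hence $V$, has countable $\bbc$-dimension, Dixmier's version of Schur's lemma gives $\End_{\mathbb{H}}(V)=\bbc$, so $Z(\mathbb{H})$ acts on $V$ through a character $\chi\colon Z(\mathbb{H})\to\bbc$; setting $\mathfrak{m}=\ker\chi$, $V$ is a module over $\mathbb{H}/\mathfrak{m}\mathbb{H}$, which is a finite module over $Z(\mathbb{H})/\mathfrak{m}=\bbc$, hence finite-dimensional, and therefore so is $V$.

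The one step carrying genuine content is the inclusion $Z(\mathbb{H})\subseteq S\mathfrak{t}$, and within it the point to get right is the length-triangularity of $S\mathfrak{t}$ in $\mathbb{H}$ derived from relation (iv) (together with the domain/faithfulness input). An alternative would be to localize $\mathbb{H}$ at $\prod_{\alpha>0}\alpha^\vee$ and use Cherednik's intertwiners $\tilde s_i=s_i+k_i/\alpha_i^\vee$, which satisfy $\tilde s_i\,p=s_i(p)\,\tilde s_i$ and the braid relations, identifying the localization with a skew group ring $S\mathfrak{t}_{\mathrm{reg}}\rtimes W$ whose center is $(S\mathfrak{t}_{\mathrm{reg}})^W$; intersecting back with $\mathbb{H}$ along the unitriangular change of basis between $\{w\}$ and $\{\tilde w\}$ recovers $S\mathfrak{t}^W$. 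I would prefer the elementary argument above, since it avoids checking the braid relations for the $\tilde s_i$.
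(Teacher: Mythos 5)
Your proof is correct. Be aware, though, that the paper itself gives no argument for this Corollary: it is stated as a known fact, the graded analogue of the Bernstein--Zelevinski description of the center of the affine Hecke algebra (Theorem \ref{thm:cent}), going back to Lusztig \cite{Lu}. So what you wrote is not a variant of an in-paper proof but a self-contained substitute for one, and your route is the standard one. All the steps check out: the inclusion $S\mathfrak{t}^W\subseteq Z(\mathbb{H})$ from relation (iv); the length-triangularity $w\,p\in w(p)\,w+\sum_{\ell(v)<\ell(w)}S\mathfrak{t}\cdot v$ by induction on $\ell(w)$; the comparison of the coefficient of a maximal-length $w_0$ in $zp=pz$, using that $S\mathfrak{t}$ is a domain and that $W$ acts faithfully on $\mathfrak{t}$, which forces $Z(\mathbb{H})\subseteq S\mathfrak{t}$; and the identification $S\mathfrak{t}\cap Z(\mathbb{H})=S\mathfrak{t}^W$ by reading off the $S\mathfrak{t}\cdot s_i$-component. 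The formal consequences are also handled correctly: $\mathbb{H}$ is free of rank $|W|$ over $S\mathfrak{t}$ and $S\mathfrak{t}$ is finite over $S\mathfrak{t}^W$ (Noether/Chevalley), hence $\mathbb{H}$ is finite over its center; and for a simple module, Dixmier's form of Schur's lemma (legitimate here, since $\mathbb{H}$ has countable dimension over the uncountable field $\mathbb{C}$) yields a central character $\chi$, and $V$ is then a cyclic module over the finite-dimensional algebra $\mathbb{H}/\mathfrak{m}\mathbb{H}$ with $\mathfrak{m}=\ker\chi$. The only blemish is cosmetic: in the displayed form of (iv) used in your induction the sign of the correction term is off --- relation (iv) gives $s_i\,q=s_i(q)\,s_i-k_i(q-s_i(q))/\alpha_i^{\vee}$ --- but this is harmless, since only the membership of the correction in $S\mathfrak{t}\cdot v$ is used there, and the sign you use in the final step of the center computation is the correct one.
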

\begin{remark}\label{rem:sphsa}
Let
\[\gep_+=\frac{1}{|W|}\sum_{w\in W}w\in \bbh(R_+,k)\]
be the idempotent corresponding to the trivial representation of $W$. The {\it spherical algebra}
$\bbh^{sph}(R_+,k)\subset \bbh(R_+,k)$ is the algebra $\gep_+ \bbh(R_+,k)\gep_+\subset\bbh
(R_+,k)$ (with unit element $\gep_+$). Then $\bbh^{sph}(R_+,k)=\gep_+ Z(\bbh(R_+,k))$, and
in particular $\bbh^{sph}(R_+,k)$ is commutative.
\end{remark}

\subsection{Dunkl--Cherednik representation $\eta$ and symmetric spaces}

The algebraic (or compact) version of the Dunkl--Cherednik representation is the analog
of the (extended) Bernstein isomorphism we encountered in the $p$--adic case.

\begin{thm}\label{thm:etau}(\cite{C}, \cite{He91a}, \cite{He91}, \cite{O4}, \cite{OGHA})
There is a faithful representation  $\eta_u=\eta_u(k):\bbh\to \textup{End}(\bbc[T])$ (the
Dunkl--Cherednik representation), defined by 
\begin{itemize}
\item[(a)] For $p\in S(\ft)$ we have $\eta_u(p):=T(p,k)$.
\item[(b)] If $w\in W$ then $\eta_u(w)$ is the natural action of $w$ on $\bbc[T]$.
\end{itemize}
\end{thm}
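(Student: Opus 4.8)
The plan is to establish the two assertions of the theorem in turn: first, that the prescription $\eta_u(p)=T(p,k)$, $\eta_u(w)=w$ extends to an algebra homomorphism $\bbh\to\End(\bbc[T])$, and second, that this homomorphism is faithful. The first part is the exact analogue, in the differential-reflection world, of checking that Bernstein's embedding $\eta_B$ respects the relations of the Bernstein presentation in the $p$-adic case.

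\emph{Well-definedness.} Since $\bbh$ is generated by the subalgebras $S\mathfrak{t}$ and $\bbc[W]$, and since — once conditions (i)--(iii) are imposed — every product in $\bbh$ is governed by the relations $s_i\cdot p=s_i(p)\cdot s_i-k_i\,\partial_i(p)$ (with $p\in S\mathfrak{t}$, $s_i$ a simple reflection and $\partial_i(p):=(p-s_i(p))/\alpha_i^\vee\in S\mathfrak{t}$ the Demazure operator), it suffices to provide: (a) an algebra homomorphism $S\mathfrak{t}\to\End(\bbc[T])$; (b) the tautological action $\bbc[W]\to\End(\bbc[T])$; subject to (c) $s_i\circ T(p,k)=T(s_i(p),k)\circ s_i-k_i\,T(\partial_i(p),k)$ in $\End(\bbc[T])$ for every $i$ and every $p\in S\mathfrak{t}$. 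Here (a) is exactly the commutativity theorem of Cherednik and Heckman quoted above — commutativity of the $T(\xi,k)$ is precisely what lets the linear map $\xi\mapsto T(\xi,k)$ extend to an algebra homomorphism $p\mapsto T(p,k)$ — and (b) is immediate. For (c), set $\Phi(p):=s_i\circ T(p,k)-T(s_i(p),k)\circ s_i$; multiplicativity of $T(\cdot,k)$ yields the twisted Leibniz identity $\Phi(pq)=T(s_i(p),k)\,\Phi(q)+\Phi(p)\,T(q,k)$, which is exactly $T(\cdot,k)$ applied to $\partial_i(pq)=\partial_i(p)\,q+s_i(p)\,\partial_i(q)$. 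As $\Phi(1)=0$, an induction on $\deg p$ then reduces (c) to the single case $p=\xi\in\mathfrak{t}$, i.e.\ to the identity $\Phi(\xi)=-k_i\,\alpha_i(\xi)\cdot\id$.

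This last identity is the one genuine computation, and the main — though entirely elementary — obstacle. Conjugating the defining formula for $T(\xi,k)$ by $s_i$ turns $\partial(\xi)$ into $\partial(s_i\xi)$, each factor $(1-t^{-\alpha})^{-1}$ into $(1-t^{-s_i\alpha})^{-1}$, and each $s_\alpha$ into $s_{s_i\alpha}$. Re-indexing the sum over $R_+$ by $\alpha\mapsto s_i\alpha$ and using the $W$-invariance of $k$, one sees that the contributions indexed by roots not proportional to $\alpha_i$ cancel against the corresponding terms of $T(s_i\xi,k)\circ s_i$, while the $\rho(k)$-shift (via $s_i(\rho(k))=\rho(k)-k_i\alpha_i$) together with the terms indexed by the positive multiples of $\alpha_i$ — namely $\alpha_i$, and $\alpha_i/2$ when it is a root — produce what remains. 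The latter are simplified by the elementary identity $(1-t^{\beta})^{-1}+(1-t^{-\beta})^{-1}=1$, and after the surviving copies of $s_i$ cancel one is left exactly with the scalar $-k_i\,\alpha_i(\xi)$; the bookkeeping of the two multiples $\alpha_i,\alpha_i/2$ in the possibly non-reduced case is organised precisely by the normalisation $k_i=\tfrac12k_{\alpha_i/2}+k_{\alpha_i}$. Hence $\eta_u\colon\bbh\to\End(\bbc[T])$ is a well-defined algebra homomorphism.

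\emph{Faithfulness.} Regard $\eta_u$ as taking values in $\Cd(T^{reg})[W]=\bigoplus_{w\in W}\Cd(T^{reg})w$ and filter the latter by differential order: $\mathcal F_m=\bigoplus_w\Cd^{\le m}(T^{reg})w$; this filtration is multiplicative, and $p\mapsto\partial(p)$ is an injection $S\mathfrak{t}\hookrightarrow\Cd(T^{reg})$. Writing $T(\xi,k)=\partial(\xi)+B_\xi$ with $B_\xi$ of differential order $0$ (the correction term involves reflections and functions but no derivatives), for $p\in S\mathfrak{t}$ of degree $d$ one gets $\eta_u(p)=T(p,k)\in\mathcal F_d$ with $\eta_u(p)\equiv\partial(p_d)\pmod{\mathcal F_{d-1}}$, where $p_d$ is the top homogeneous part of $p$ and $\partial(p_d)$ sits in the $w=e$ component; moreover $\eta_u(w)=w\in\mathcal F_0$. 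Now suppose $h=\sum_{w\in W}p_w\otimes w\in\ker\eta_u$ and let $d:=\max_w\deg p_w$. Reducing the relation $0=\sum_w T(p_w,k)\circ w$ modulo $\mathcal F_{d-1}$ gives $\sum_w\partial((p_w)_d)\,w\in\mathcal F_{d-1}$; since these summands occupy distinct $W$-components, each $\partial((p_w)_d)$ has order $<d$, so $(p_w)_d=0$ for all $w$ — contradicting the choice of $d$ unless $h=0$. Finally, a differential-reflection operator with coefficients regular on $T^{reg}$ that annihilates all of $\bbc[T]$ must vanish, so $\eta_u$ is faithful as a representation on $\bbc[T]$. (Alternatively, faithfulness can be deduced from the triangularity of the operators $T(\xi,k)$ on the monomial basis $\{t^\lambda\}_{\lambda\in P}$, as in \cite{O4}, \cite{OGHA}.)
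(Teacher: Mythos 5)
Your proposal is correct, and since the paper states this theorem as a known result (citing \cite{C}, \cite{He91a}, \cite{He91}, \cite{O4}, \cite{OGHA}) without reproducing a proof, the right comparison is with the standard arguments in those references — which is exactly what you give: reduce the well-definedness to the cross relation for a simple reflection against a linear $\xi$ (via commutativity of the $T(\xi,k)$ and the Demazure-type twisted Leibniz induction, with the rank-one computation organised by $s_i(\rho(k))=\rho(k)-k_i\alpha_i$ and $(1-t^{\beta})^{-1}+(1-t^{-\beta})^{-1}=1$), and obtain faithfulness from the leading-symbol/filtration argument on $\Cd(T^{reg})[W]$ (equivalently, the triangularity of the $T(\xi,k)$ on monomials used in \cite{O4}, \cite{OGHA}). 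The only presentational remark is that the auxiliary lemma that a differential--reflection operator with coefficients regular on $T^{reg}$ annihilating all of $\bbc[T]$ must vanish is logically needed before the filtration step, not after; it is standard and true, so this is an ordering issue rather than a gap.
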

The next result is the version of Bernstein's isomorphism adopted to non-compact Riemannian
symmetric spaces:

\begin{thm}(\label{thm:etav}[\cite{C,He91a,He91,O4,OGHA}]
There is a {faithful representation $\eta_v=\eta_v(k):\bbh\to \textup{End}(C^\infty_c(A))$},
defined by the same map as above but with the operators acting on $C^\infty_c(A)$.
\end{thm}

\subsection{Comparison with the $p$--adic case; further developments}\label{sub:comp}
Let $X=G/K$ be a real Riemannian symmetric space. The "Archimedean Hecke algebra" $\ffR(\fg,K)$ (\cite{KV}) of the 
Riemannian symmetric pair $(G,K)$ is the algebra of (two-sided) $K$-finite distributions on $G$ with support on $K$. It is well--known that 
\begin{equation}
\ffR(\fg,K)\simeq U(\fg)\otimes_{U(\fk)} R(K)
\end{equation}
where $R(K)$ denotes the algebra of $K$-finite functions with support on $K$. Then $\ffR(\fg,K)$ is 
an associative algebra with respect to convolution (without unit, in general). 

This convolution algebra of distributions has a natural action (by convolution) on the space $C^\infty(G)_{K\times K}$ of 
two sided $K$-finite smooth functions on $G$.  

We remark that the indicator function $\gep_K\in R(K)$ of $K$ is an idempotent element. 
\begin{df}
The spherical algebra $\ffR(\fg/\!\!/K)$  is the algebra defined by $\ffR(\fg/\!\!/K)=\gep_K*\ffR(\fg,K)*\gep_K$. 
\end{df}
It is readily seen that $\ffR(\fg/\!\!/K)$ is an associative algebra 
with unit element $\gep_K$. Clearly $\ffR(\fg/\!\!/K)$ acts by convolution 
on the space of spherical functions $C(G/\!\!/K)$. 
It follows in the usual way that $\ffR(\fg/\!\!/K)$ is commutative, which also follows from the next Proposition:   
\begin{proposition}
Let $X=G/K$. 
The map $U(\fg)^K\to \ffR(\fg/\!\!/K)$ defined by $D\to \tilde{D}:=D* \gep_K$ induces an algebra isomorphism 
$\bbd(X)\simeq \ffR(\fg/\!\!/K)$. The space $C^\infty(X)^K$ corresponds to $C^\infty(G/\!\!/K)$ via lifting $f\to \tilde{f}$.
Then $\tilde{D}*\tilde{f}=(Df)\tilde{}$.  
\end{proposition}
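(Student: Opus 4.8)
The plan is to prove that $D\mapsto\tilde D:=D*\gep_K$ is a surjective algebra homomorphism from $U(\fg)^K$ onto $\ffR(K,\fg,K)$ whose kernel is exactly $U(\fg)^K\cap U(\fg)\fk$; granting the presentation $\bbd(X)=U(\fg)^K/(U(\fg)^K\cap U(\fg)\fk)$ recalled in Section~\ref{sec:HCH}, this gives the asserted algebra isomorphism, and the transmutation identity $\tilde D*\tilde f=\widetilde{Df}$ will drop out along the way. The starting point is a handful of elementary identities for convolution of distributions supported on $K$: $\gep_K$ is idempotent (it is normalised Haar measure on the \emph{compact} group $K$), $\delta_k*\gep_K=\gep_K*\delta_k=\gep_K$ for $k\in K$, and, since $\delta_k*D*\delta_{k^{-1}}=\Ad(k)D$, any $\Ad(K)$-invariant $D$ satisfies $\gep_K*D=D*\gep_K$. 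Hence $\tilde D=\gep_K*D*\gep_K$ indeed lies in $\ffR(K,\fg,K)=\gep_K*\ffR(\fg,K)*\gep_K$; moreover $\tilde D_1*\tilde D_2=D_1*\gep_K*D_2*\gep_K=D_1*D_2*\gep_K=\widetilde{D_1D_2}$ and $\tilde 1=\gep_K$ is the unit, so $D\mapsto\tilde D$ is an algebra homomorphism.

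For surjectivity I would use that $\ffR(\fg,K)=U(\fg)\otimes_{U(\fk)}R(K)$ is spanned by distributions $u*\phi$ with $u\in U(\fg)$ and $\phi\in R(K)$, that $\phi*\gep_K=\bigl(\int_K\phi\bigr)\gep_K$ because convolution with Haar measure absorbs any density on the compact group $K$, and that $\gep_K*\bigl(\Ad(k)u\bigr)*\gep_K=\gep_K*\delta_k*u*\delta_{k^{-1}}*\gep_K=\gep_K*u*\gep_K$ for every $k\in K$. Averaging the last identity over $K$ gives $\gep_K*u*\gep_K=\widetilde{u^{\natural}}$ with $u^{\natural}:=\int_K\Ad(k)u\,dk\in U(\fg)^K$, so the spanning family $\{\gep_K*u*\phi*\gep_K\}$ of $\ffR(K,\fg,K)$ lies in the image. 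One inclusion of the kernel statement is equally direct: for $X\in\fk$ one has $(X*\gep_K)(f)=\tfrac{d}{dt}\big|_{0}\int_K f(\exp(tX)k)\,dk=0$ since $\exp(tX)\in K$ and Haar measure on $K$ is translation invariant, whence $X*\gep_K=0$ and therefore $U(\fg)\fk\subseteq\ker\bigl((\,\cdot\,)*\gep_K\bigr)$.

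The reverse kernel inclusion, together with the transmutation identity, I would extract from the convolution action of $\ffR(\fg,K)$ on $C^\infty(G)_{K\times K}$. In that action $U(\fg)$ operates by the left-invariant differential operators, and $\gep_K$ operates as the averaging projector $f\mapsto\bigl(g\mapsto\int_K f(gk)\,dk\bigr)$ onto right-$K$-invariant functions, so $\gep_K$ acts as the identity on $C^\infty(G/\!\!/K)$; for $D\in U(\fg)^K$ the corresponding left-invariant operator preserves $C^\infty(G/\!\!/K)$ and descends there precisely to the $G$-invariant operator on $X$ representing the class of $D$ in $\bbd(X)$. Since the lift $f\mapsto\tilde f$, $\tilde f(g)=f(gK)$, identifies $C^\infty(X)^K$ with $C^\infty(G/\!\!/K)$, associativity of the module structure gives $\tilde D*\tilde f=(D*\gep_K)*\tilde f=D*(\gep_K*\tilde f)=D*\tilde f=\widetilde{Df}$, which is the transmutation property. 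The same computation shows that the composite $U(\fg)^K\to\ffR(K,\fg,K)\to\End\bigl(C^\infty(G/\!\!/K)\bigr)$ sends $D$ to $\tilde f\mapsto\widetilde{Df}$; its kernel is $U(\fg)^K\cap U(\fg)\fk$ because that is the kernel of $U(\fg)^K\to\bbd(X)$ and $\bbd(X)$ acts faithfully on $C^\infty(X)^K$ --- a nonzero $G$-invariant operator on $X$ has nonzero radial part, hence does not annihilate all $K$-invariant functions. Therefore $\ker\bigl(D\mapsto\tilde D\bigr)\subseteq U(\fg)^K\cap U(\fg)\fk$, and combining the three paragraphs gives the isomorphism.

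I expect the only real difficulty to be the bookkeeping in the third paragraph: one must fix mutually compatible normalisations for the embedding $U(\fg)\hookrightarrow\ffR(\fg,K)$, for the convolution product, and for the action on $C^\infty(G)_{K\times K}$, so that $U(\fg)$ really acts by the \emph{left}-invariant operators that descend to $\bbd(X)$ while $\gep_K$ acts by \emph{right}-averaging, and one must check that the resulting $\ffR(K,\fg,K)$-action stays inside $C^\infty(G/\!\!/K)$; since both $\bbd(X)$ and $\ffR(K,\fg,K)$ are commutative, the ``left versus right'' choice cannot spoil the conclusion. A more hands-on alternative for the reverse kernel inclusion is to use the $\Ad(K)$-stable splitting $U(\fg)=\sigma(S(\fp))\oplus U(\fg)\fk$ coming from PBW adapted to $\fg=\fp\oplus\fk$ (with $\sigma$ the symmetrisation of $S(\fp)$), reducing the claim to injectivity of $q\mapsto\sigma(q)*\gep_K$ on $S(\fp)^K$, which follows from the injectivity of Chevalley restriction $S(\fp)^K\hookrightarrow S(\fa)^W$ after passing to highest-order symbols along $A$.
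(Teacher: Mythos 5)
Your argument is correct, but the decisive step is organized differently from the paper. Your first two paragraphs (idempotence of $\gep_K$, commutation with $\Ad(K)$-invariant elements, $R(K)*\gep_K=\bbc\gep_K$, averaging to land in $U(\fg)^K*\gep_K$) run parallel to the paper's one-line identity $\ffR(K,\fg,K)=\gep_K*U(\fg)\otimes_{U(\fk)}R(K)*\gep_K=U(\fg)^K\otimes_{U(\fk)}\bbc\gep_K$, so homomorphism and surjectivity are essentially the same. The difference is the kernel: the paper reads it off purely algebraically from $D*\gep_K=D\otimes_{U(\fk)}\gep_K$, since $U(\fg)\otimes_{U(\fk)}\bbc\simeq U(\fg)/U(\fg)\fk$, and then the statement about the action on $C^\infty(G/\!\!/K)$ is a throwaway ("immediate from the definitions"). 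You instead prove only the inclusion $U(\fg)\fk\subseteq\ker$ directly (via $X*\gep_K=0$) and obtain the reverse inclusion from the convolution action: you establish the transmutation identity $\tilde D*\tilde f=\widetilde{Df}$ first and then invoke faithfulness of $\bbd(X)$ on $C^\infty(X)^K$ (nonzero operators have nonzero radial part, and a nonzero operator on $A$ cannot kill all $W$-invariant functions). That route is valid — injectivity of $\textup{Rad}$ is available in the paper — but it makes the analytic action, and hence the convention bookkeeping you flag (convolution must realize $U(\fg)$ by the left-invariant operators $\rho(D)$ and $\gep_K$ by right-averaging, and one must check the action preserves bi-$K$-invariance), load-bearing for the isomorphism itself, whereas the paper's tensor-product computation needs none of this and gets the kernel in one stroke. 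What your approach buys is a self-contained verification that the module structure is compatible with $\bbd(X)$ acting on $C^\infty(X)^K$, i.e. the last sentence of the Proposition comes out as a by-product rather than an afterthought; what it costs is importing the faithfulness fact and the extra care with left/right conventions, which your appeal to commutativity only partially dispels but which your concluding remarks (or the alternative PBW/Chevalley-restriction argument you sketch) do resolve.
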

\begin{proof}
Clearly $\ffR(\fg/\!\!/K)=\gep_K*U(\fg)\otimes_{U(\fk)}\bbc\gep_K=U(\fg)^K\otimes_{U(\fk)}\bbc\gep_K$.
Hence the algebra map  $U(\fg)^K\to \ffR(\fg/\!\!/K)$ given by $D\to D*\gep_K$ is surjective, and from $D*\gep_K=D\otimes_{U(\fk)}\gep_K$ we
see that the kernel equals $U(\fg)^K\cap U(\fg)\fk$, which yields the result. The action on smooth $K$-bi-invariant functions is immediate from the 
definitions.
\end{proof}
In the comparison with the Satake isomorphism in the $p$--adic case, it is more natural to define Harish--Chandra's 
homorphism not on $\bbd(X)$, but rather on the isomorphic algebra $\ffR(\fg/\!\!/K)$. We define: 
\begin{align}\label{eq:HCS}
\gc^\ffR_S: \ffR(\fg/\!\!/K)&\to S(\fa)^W\\
\nonumber D*\gep_K&\to\gc(D)
\end{align}
Recall Bernstein's extended isomorphism $\eta_B:\mathbf{H}(L,W,q)\to \mathbf{H}(G,B)$ in the $p$--adic case. 
We are not aware of the existence of any analog like this in the real case, hence we are settling  
for the present version, where we are only able to extend and deform the algebra of {\it radial parts} of operators in $\bbd(X)$.
Given the above, it is natural to transfer the "radial part algebra isomorphism" $\textup{Rad}$ on $\bbd(X)$ to a homomorphism 
$\textup{Rad}^\ffR$ on $\ffR(\fg/\!\!/K)$. In order to formulate this properly, we introduce the ring of operators on functions 
on $A$ (or $T$, 
or $T_u$, depending on the case) in which the Dunkl--Cherednik operators find their natural home. 

Let $T$ be the complex algebraic torus $T=A(\bbc)/(A(\bbc)\cap K_c)$ as above. The coordinate ring $\bbc[T]$ of $T$ is a subring $\bbc[T]\subset \bbc[A]$. 
Then we have $T=T_uA$, the polar decomposition, with $T_u=\Hom(P,S^1)$ the compact form, and $A:=\Hom(P,\bbr_+)\simeq \fa$, 
the real vector group of $T$. 

The Weyl denominator is a very important element of $\bbc[A]$ (or $\bbc[T]$), defined by 
and also on $A$ by: 
\begin{equation*}
\Delta(a):=\prod_{\alpha\in R^0_+}(a^{\frac12\alpha}-a^{-\frac12\alpha})\in \bbc[A]
\end{equation*}
where $R^0$ denotes the set of unmultipliable roots.  
\begin{df}
We denote by $\bba(T,W)\subset \textup{End}_{\bbc}(\bbc[T])$ the ring of algebraic regular differential--reflection operators 
on $T$, by which we mean the ring of differential operators on $T$ with coefficients in the ring $\bbc(T)[W]$  
(the twisted group ring of $W$, with coefficients in the field $\bbc(T)$ of rational functions on $T$) which preserve 
the ring $\bbc[T]$ of Laurent polynomials on $T$. We denote by $\bba(W\backslash T)$ the ring of polynomial partial differential 
operators on $W\backslash T\simeq \bbc^r$. Observe $\bba(W\backslash T)\e_+\subset \bba(T,W)$.

The restriction of elements of $\bbc[T]$ to functions on $T_u$ or on $A$ defines injective ring homomorphisms, whose images are denoted 
by $\bbc[T_u]$ and $\bbc[A]$ respectively. 
We sometimes denote by $\bba(A,W)$ or $\bba(T_u,W)$ the rings of restrictions of the operators from $\bba(T,W)$ to $A$ or $T_u$ respectively.   
\end{df}

It is not difficult to show that: 
\begin{proposition}(cf. \cite[Lemma 9.6]{OGHA})
The elements of $\bba(T,W)$ have coefficients in $\bbc[\Delta^{-1}][T][W]$. By restriction to $A$ and $T_u$ respectively, 
we have similar results for these cases. 
\end{proposition}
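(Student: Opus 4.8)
The plan is to exploit the hypothesis that $D$ preserves $\bbc[T]$ by feeding $D$ the monomials $t^{\mu}$, $\mu\in P$, and then to analyse the resulting identities along a hypothetical ``bad'' prime divisor by passing to its local field; freeness of the $W$-action on $T^{reg}$ together with linear independence of characters should then close the argument. First I would write $D\in\bba(T,W)$ in the form $D=\sum_{w\in W}P_w\circ w$, where each $P_w=\sum_I c_{I,w}\,\partial^{I}$ is an ordinary differential operator of finite order with coefficients $c_{I,w}\in\bbc(T)$ and the $\partial^{I}$ are the monomials in a basis $\partial(\xi_1),\dots,\partial(\xi_r)$ of constant-coefficient operators, so that $\partial^{I}t^{\mu}=p_I(\mu)\,t^{\mu}$ for suitable polynomials $p_I$ on $\ft^{*}$; this decomposition exists because the $\partial(\xi)$ are normalised by $W$. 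The claim to prove is that every $c_{I,w}$ lies in $\bbc[\Delta^{-1}][T]=\bbc[T^{reg}]$, where $T^{reg}=T\setminus V(\Delta)$ is the locus on which $W$ acts freely; once this is known for $T$, the assertions for $A$ and $T_u$ are immediate, since $\bba(A,W)$ and $\bba(T_u,W)$ are by definition rings of restrictions of elements of $\bba(T,W)$ and restriction respects the decomposition $\bbc(T)[W]=\bigoplus_{w}\bbc(T)\,w$.

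Suppose, for contradiction, that some $c_{I,w}$ has a pole along a prime divisor $Z$ with $Z\not\subseteq V(\Delta)$. I would work in the discrete valuation ring $\mathcal{O}_{T,Z}$, with uniformiser $\pi$ and residue field $\bbc(Z)$, and let $e\ge1$ be the largest pole order along $Z$ occurring among the $c_{I,w}$. Put $b_{I,w}:=\overline{\pi^{e}c_{I,w}}\in\bbc(Z)$ (the residue, which is zero unless the pole order of $c_{I,w}$ along $Z$ equals $e$); then not all $b_{I,w}$ vanish, and $B_w(\lambda):=\sum_I b_{I,w}\,p_I(\lambda)\in\bbc(Z)[\lambda]$ is a nonzero polynomial for at least one $w$, since the $p_I$ are linearly independent. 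For every $\mu\in P$ one has $D(t^{\mu})=\sum_w\big(\sum_I c_{I,w}\,p_I(w\mu)\big)t^{w\mu}\in\bbc[T]$, which therefore has non-negative valuation along $Z$; multiplying by $\pi^{e}$, reducing modulo $\pi$, and using that each $t^{w\mu}$ is a unit of $\mathcal{O}_{T,Z}$, this yields
\[
\sum_{w\in W}B_w(w\mu)\,\overline{t^{w\mu}}=0\quad\text{in }\bbc(Z),\qquad\text{for every }\mu\in P.
\]

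Next I would evaluate this identity at a point $z$ of $Z$ chosen general enough to lie in the domains of definition of all the $B_w$ and $\overline{t^{w\mu}}$ and, crucially, to lie in $T^{reg}$ --- possible because $Z\not\subseteq V(\Delta)$, so $Z\cap T^{reg}$ is dense in $Z$. Evaluation gives $\sum_{w}B_w(z)(w\mu)\,(w^{-1}z)^{\mu}=0$ for all $\mu\in P$. Since $W$ acts freely at $z$, the points $w^{-1}z$ are pairwise distinct, hence the characters $\mu\mapsto(w^{-1}z)^{\mu}$ of $P$ are pairwise distinct; by linear independence of characters in the sharpened form that allows polynomial coefficients, each polynomial function $\mu\mapsto B_w(z)(w\mu)$ vanishes identically on $P$, and so (as $P$ is Zariski dense in $\ft^{*}$) $B_w(z)=0$ as a polynomial for general $z\in Z$, whence $B_w=0$ in $\bbc(Z)[\lambda]$ for every $w$. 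This contradicts the choice of $Z$, so no such divisor exists and all $c_{I,w}$ lie in $\bbc[\Delta^{-1}][T]$, as required.

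\textbf{Main obstacle.} I expect the step requiring the right idea to be the reduction itself: one cannot argue coefficient by coefficient or reflection by reflection, because the individual summands $P_w\circ w$ need not preserve $\bbc[T]$ --- already for a Dunkl--Cherednik operator $T(\xi,k)$ the $s_{\alpha}$-component of its coefficient is $-k_{\alpha}\,\alpha(\xi)(1-t^{-\alpha})^{-1}$, which does not --- and since the poles of the $c_{I,w}$ lie along divisors rather than points, localising at a single point of $T^{reg}$ also does not suffice. The device that works is to localise instead \emph{along} a hypothetical bad divisor $Z$ and read off the leading polar coefficients there, whereupon the condition that $D$ preserves $\bbc[T]$ collapses modulo $\pi$ to one identity in $\bbc(Z)$ that freeness of $W$ on $Z\cap T^{reg}$ and the independence of characters render untenable.
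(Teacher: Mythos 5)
The paper does not actually prove this proposition; it is quoted with a reference to \cite[Lemma 9.6]{OGHA}, so your argument can only be measured against the standard one, and it holds up. Writing $D=\sum_{w}P_w\circ w$, localising at the generic point of a hypothetical polar divisor $Z\not\subseteq V(\Delta)$, extracting the leading polar coefficients, reducing the hypothesis $D(t^\mu)\in\bbc[T]$ modulo the uniformiser, and then killing the resulting identity at a general point of $Z$ by independence of the distinct characters $\mu\mapsto (w^{-1}z)^\mu$ with polynomial coefficients is a complete and correct proof; the final passage from ``no poles along divisors outside $V(\Delta)$'' to membership in $\bbc[\Delta^{-1}][T]$ is legitimate because $T$ is smooth, hence normal. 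In substance this is the same mechanism that underlies the cited lemma (preservation of $\bbc[T]$ plus distinctness of the $W$-translates of a point off the zero set of $\Delta$ forces regularity of the coefficients there), but your packaging via leading polar coefficients along a divisor is a clean way to avoid having to make uniform choices of weights or invert character matrices at varying regular points. One point deserves a comment: your blanket identification of $T\setminus V(\Delta)$ with the locus where $W$ acts freely is true here only because the character lattice of $T$ is the weight lattice $P$ (Steinberg's theorem); it would fail, e.g., for an adjoint torus. For your proof, however, you do not need this global statement: you only need the generic point of $Z$ to have trivial stabiliser, and that follows elementarily, since any $u\in W$ whose fixed-point set in $T$ has codimension one is a reflection $s_\alpha$, whose fixed locus $\{t^\alpha=1\}$ lies in $V(\Delta)$, while all other nontrivial elements have fixed loci of codimension at least two, so none of these can contain $Z$. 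With that remark, the argument, including the immediate deduction for $\bba(A,W)$ and $\bba(T_u,W)$ by restriction, is complete.
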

 
Observe that the Dunkl--Cherednik operators belong to this ring: $T(\xi,k)\in \bba(A,W)$. 
We define $\textup{Rad}^\ffR$ by:
\begin{align*}
\textup{Rad}^\ffR:\ffR(\fg/\!\!/K)&\to \bba(W\backslash A)\gep_+\subset \bba(T,W)\\
D*e_K&\to \textup{Rad}(D)\gep_+ 
\end{align*}


Let $\bbh_v^{D}(R_+,k)\subset \bba(A,W)$ denote the image of the injective homomorphism $\eta_v$ (as in Theorem \ref{thm:etav}), and let
$\bbh_v^{D,sph}(R_+,k)\subset \bbh_v^{D}(R_+,k)$ be its spherical algebra. We transfer the Harish--Chandra homomorphism $\gc_S^\ffR$ to 
$\bbh_v^{D,sph}(R_+,k)$ via $\textup{Rad}^\ffR$, to obtain an isomorphism 
\begin{equation}
\tilde{\gc}_S(k):\bbh_v^{D,sph}(R_+,k)\to S(\fa)^W
\end{equation}
characterised by $\tilde{\gc}_S(k)(D(p,k)\gep_+)=p$ for all $p\in S(\fa)^W$.

Now we are finally able to formulate the appropriate inversion of the Harish--Chandra isomorphism in the present context, in 
analogy with the explicit inversion of the Satake isomorphism Theorem \ref{thm:macsat} in the $p$--adic context:
\begin{cor}
The inversion of $\tilde{\gc}_S(k):\bbh_v^{D,sph}(R_+,k)\to S(\fa)^W$ is given by the isomorphism 
\begin{align*}
\epsilon(k):\bbc[\fa^*]^W=S(\fa)^W&\to \bbh_v^{D,sph}(R_+,k)\\
p&\to \eta_v(k)(p)e_+
\end{align*} 
(and similar for $\eta_u$).  
 \end{cor}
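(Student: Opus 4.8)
The plan is to read off the Corollary directly from the chain of identifications already set up in the excerpt, rather than prove anything genuinely new. The map $\tilde{\gc}_S(k):\bbh_v^{D,sph}(R_+,k)\to S(\fa)^W$ was \emph{defined} as the transfer of $\gc_S^\ffR$ along the radial-part isomorphism $\textup{Rad}^\ffR$, and its defining property was recorded as $\tilde{\gc}_S(k)(D(p,k)\gep_+)=p$ for all $p\in S(\fa)^W$. So the entire content of the statement is that the prescription $p\mapsto \eta_v(k)(p)\gep_+$ is (i) well defined as a map into $\bbh_v^{D,sph}(R_+,k)$, (ii) a ring homomorphism, and (iii) a two-sided inverse of $\tilde{\gc}_S(k)$. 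Since $\tilde{\gc}_S(k)$ is already known to be an isomorphism, it suffices to check that the composite $\tilde{\gc}_S(k)\circ\epsilon(k)$ is the identity on $S(\fa)^W$; bijectivity of $\epsilon(k)$ then follows formally.

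The key steps, in order, are as follows. First I would note that for $p\in S(\fa)^W=S\ft^W$ the element $\eta_v(k)(p)$ is the image under the Dunkl-Cherednik representation $\eta_v$ of Theorem~\ref{thm:etav} of the central element $p\in Z(\bbh)\cong S\ft^W$; by Remark~\ref{rem:sphsa} we have $\gep_+ p=p\gep_+$ and $\gep_+ p\gep_+=\gep_+ p$, so $\eta_v(k)(p)\gep_+=\gep_+\eta_v(k)(p)\gep_+$ lies in the spherical algebra $\gep_+\eta_v(\bbh)\gep_+=\bbh_v^{D,sph}(R_+,k)$, giving well-definedness. Second, since $p\mapsto \eta_v(k)(p)$ is an algebra homomorphism $S\ft^W\to\bbh_v^D(R_+,k)$ landing in the image of $Z(\bbh)$, and since compressing by the idempotent $\gep_+$ is multiplicative on the centralizer of $\gep_+$, the map $\epsilon(k)$ is an algebra homomorphism. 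Third, and this is the heart of the matter, I would identify $\eta_v(k)(p)\gep_+$ with $D(p,k)\gep_+$: by the construction of the hypergeometric operators (the Hypergeometric operators theorem), the restriction of the $W$-invariant operator $T(p,k)=\eta_B(p)=\eta_u(k)(p)$ to $\bbc[T]^W$, equivalently its compression $\eta_u(k)(p)\gep_+$, is exactly $D(p,k)$, and the same computation transported to $C^\infty_c(A)$ via Theorem~\ref{thm:etav} gives $\eta_v(k)(p)\gep_+=D(p,k)\gep_+\in\bba(W\backslash A)\gep_+$. Applying $\tilde{\gc}_S(k)$ and using its defining property $\tilde{\gc}_S(k)(D(p,k)\gep_+)=p$ yields $\tilde{\gc}_S(k)(\epsilon(k)(p))=p$, so $\tilde{\gc}_S(k)\circ\epsilon(k)=\id_{S(\fa)^W}$. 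Finally, since $\tilde{\gc}_S(k)$ is an isomorphism, $\epsilon(k)=\tilde{\gc}_S(k)^{-1}$ is its inverse and in particular an isomorphism; the parenthetical remark about $\eta_u$ follows by the identical argument with $C^\infty_c(A)$ replaced by $\bbc[T]$, i.e. using Theorem~\ref{thm:etau} in place of Theorem~\ref{thm:etav}.

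The main obstacle is really just step three — making precise the identification of the compressed Dunkl-Cherednik operator $\eta_v(k)(p)\gep_+$ with the hypergeometric operator $D(p,k)$ (viewed inside $\bba(W\backslash A)\gep_+\subset\bba(A,W)$), and checking that this identification is compatible with the two presentations of $\tilde{\gc}_S(k)$: the one via $\gc_S^\ffR$ and $\textup{Rad}^\ffR$, and the one via the normalization $\tilde{\gc}_S(k)(D(p,k)\gep_+)=p$. This is where Theorem~\ref{thm:RSS} is doing the work in the symmetric-space case, since it says precisely $\textup{Rad}(\gamma_{HC}^{-1}(p))=D(p,k)$; for general parameters $k$ one instead takes $D(p,k)$ as the definition and the compatibility is tautological, but one must be careful that the idempotent $\gep_+$ (respectively $e_K$), absent from the naive statement of Theorem~\ref{thm:RSS} and restored via the passage to $\ffR(K,\fg,K)$ discussed in Subsection~\ref{sub:comp}, is tracked correctly through all the identifications. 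Once that bookkeeping is done, everything else is formal manipulation of idempotents and algebra homomorphisms.
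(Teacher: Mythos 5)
Your proposal is correct and follows essentially the route the paper intends: the corollary is left as an immediate consequence of the characterisation $\tilde{\gc}_S(k)(D(p,k)\gep_+)=p$ together with the fact (Remark \ref{rem:sphsa}) that the spherical algebra is $\gep_+Z(\bbh)$, so that $\bbh_v^{D,sph}(R_+,k)$ consists exactly of the elements $\eta_v(k)(p)\gep_+=D(p,k)\gep_+$ with $p\in S(\fa)^W$. Your steps (centrality of $p$, multiplicativity of compression by $\gep_+$, the identification $\eta_v(k)(p)\gep_+=D(p,k)\gep_+$, and $\tilde{\gc}_S(k)\circ\epsilon(k)=\id$ forcing $\epsilon(k)=\tilde{\gc}_S(k)^{-1}$) are precisely the unpacking of this, including the correct caveat that for general $k$ the characterisation of $\tilde{\gc}_S(k)$ is taken as its definition.
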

Recall that $\eta_v(k)$ (or $\eta_u(k)$) extends to an injective algebra homomorphism on the full graded affine Hecke algebra $\bbh(R_+,k)$, 
as we saw in the $p$--adic case with Bernstein's homomorphism. In other words, $\eta_v(k)$ and $\eta_u(k)$ are faithful representations of $\bbh(R_+.k)$.
A remarkable feature of the construction given here, is that the 
parameters $k_\ga$ of the restricted roots $\ga\in R_+$ need no longer be bound to be half the dimensions of the restricted root spaces in $\fg$. 
The representations $\eta_v(k)$ and $\eta_u(k)$ are merely specialisations of faithful representations $\eta_v$ and $\eta_u$ of the 
generic graded affine Hecke algebra $\bbh$.   
In particular, we may specialise these parameters to arbitrary complex values (constant on $W$-orbits in $R_+$).  
\begin{remark}
As in the $p$--adic case, two generalizations of the theory of zonal spherical functions for $(G,K)$ present 
themselves. First of all, the parameters $q_\alpha$ (or $k_\alpha$ in the real case) can now vary freely in $\bbc$. Secondly we can 
study the spectral problems for the commuting Dunkl--Cherednik operators $\{T(\xi,k)\}$ on $T_u$ or $A$, rather than for the 
$W$--invariant differential operators $\{D(p,k)\}$.  See \cite{HO}, \cite{He91}, \cite{OGHA}. 
\end{remark}
\begin{remark}
Studying such "generalised hypergeometric equations" has led to applications in mathematical physics (in conformal field theory \cite{C}, 
integrable models such as the Calogero-Moser system \cite{HO}, \cite{O4}, \cite{Ruis} and the Yang system \cite{HOH0}), 
algebraic geometry (study of certain period maps of moduli spaces \cite{HL}, \cite{CHL}) and algebraic combinatorics 
(the Macdonald conjectures \cite{O89}, \cite{He91}, \cite{C95}). 
\end{remark}
\begin{remark}
Oda \cite{Oda2} has improved much on the above remarks regarding a correspondence between spherical 
$\bbh(R_+,k)$-modules and spherical representations of $G$. Oda defined various functorial maps from the category of 
$\bbh(R_+,k)$-modules to the category of $(\fg,K)$-modules extending this. He achieved this by extending  
the radial part construction to non-invariant operators and functions, for certain $K$-types calles \emph{single petaled}. 
He also set up a detailed comparison between Helgason's Fourier decomposition of $C_c^\infty(X)$ and the 
decomposition of $C^\infty_c(A)$ studied in \cite{OGHA} (see also Section \ref{Section:HA}), allowing him to establish 
the mentioned functorial correspondences. 

For some classical groups, intricate exact functors from the category of admissible representations of $G$ 
to the category of modules over a graded affine Hecke algebra $\bbh(R_+,k)$ have been constructed, providing 
further analogy with the $p$--adic case. See \cite{Oda}, \cite{CT}. These functors map an irreducible unitary representation 
of $G$ to a $*$--unitary $\bbh(R_+,k)$-module which is irreducible or $0$.
\end{remark}
\begin{remark}
As was remarked at the end of Section \ref{sec:SI}, for $p$--adic reductive groups, the Borel-Casselman functor $V\to V^B$ 
from Iwahori--spherical smooth representations to $\bH(G,B)$-modules is an equivalence. Moreover, 
irreducible unitary representations are mapped to irreducible $*$--unitary modules of $\bH(G,B)$, where $*$ is the anti-linear 
anti--involution of $\bH(G,B)$ defined by $T_w^*=T_{w^{-1}}$. 
Remarkably, in this situation the converse also holds true \cite{BaMo}. 
In particular, the irreducible $K$--spherical unitary representations of $G$ are in bijection with the irreducible $*$--unitary $e_K$--spherical 
representations of $\bH(G,B)$. 

In the present situation for real reductive groups, one only has the partial results on functorial maps from representations of 
$G$ to $\bbh(R_+,k)$-modules mentioned above. 
In general we only have a much weaker correspondence between unitary representations of $G$ and $\bbh(R_+,k)$-modules: 
It is well--known that the irreducible unitary $K$--spherical representations $(V,\pi)$ of a noncompact real reductive group $G$, 
are in one to one correspondence with the elementary positive definite $K$--spherical functions $\varphi_\pi$ on $X=G/K$. 
Given such an elementary spherical function $\varphi_\pi$, we obtain a character of the spherical Hecke algebra $\ffR(\fg/\!\!/K)$, 
thus also of $\textup{Rad}^\ffR(\ffR(\fg/\!\!/K))$, the spherical algebra of $\bbh^D(R_+,k)$. 
Hence we obtain an irreducible spherical representation $(V^\bbh,\pi^\bbh)$ of $\bbh^D(R_+,k)$ corresponding to $(V,\pi)$.

By analogy with the situation for $p$--adic reductive groups, it would seem natural to expect that $(V^\bbh,\pi^\bbh)$ is unitary 
for $(\bbh^D(R_+,k),*)$. Indeed, in the special cases mentioned above, this follows from the properties of the constructed functorial 
maps, see \cite{CT}.  However, in these cases the converse does not always hold. Worse, for certain complex simple groups it 
is not always true that $(V^\bbh,\pi^\bbh)$ is $*$--unitary
\footnote{Dan Ciubotaru, private communication.}.  
\end{remark}
\begin{remark}
Another natural and compelling generalisation of the commuting differential--reflection operators $\{T(\xi,k)\}$ was provided
by Cherednik (see e.g. \cite{C95}), where they are further deformed to commuting {\it difference} reflection operators. This
development was originally motivated by 
their crucial role for the solution of the Macdonald conjectures, which can be understood as the explicit determination of the 
harmonic analysis relative to the Macdonald and Koornwinder families of orthogonal polynomials \cite{Mac96}, \cite{Koor92}. 
The Macdonald polynomials can be interpreted for certain values of their 
parameters as spherical functions on compact quantum symmetric spaces \cite{Nou}, \cite{Letz04}. 
The noncompact harmonic analysis associated to such systems of eigenfunction equations (related to the spectrum of noncompact quantum symmetric spaces) 
is an active area of research, with deep ramifications to representation theory and mathematical physics \cite{GKK}, \cite{KS}.
\end{remark}

\section{Generalised spherical transforms}

In this section we will give a brief overview of the joint eigenfunctions of the operators $T(\xi,k)$ and
the related spectral decompositions.

\subsection{Harmonic analysis; the Weyl density functions}

We will consider generalisations of the spherical Fourier transforms on Riemannian symmetric spaces.
The first generalisation is that we no longer assume that the root multiplicity parameters arise from the
dimensions of root spaces of restricted roots of a Riemannian symmetric pair. Secondly, we will not just
work with $|W|$--invariant functions on $A$ (the restrictions to $A$ of spherical functions on $X$) but
rather with general functions on $A$.

Let $\mathcal{K}$ be the space of multiplicity parameters \eqref{eq:cK}, and set
\begin{align*}
\Ck^{T_u}_+&=\left\{k\in\Ck\mid k_\ga+k_{\ga/2}\geq0, k_\ga\geq0,\, \alpha \in R^0\right\}\\
\Ck^A_+&=\left\{k\in\Ck\mid k_\ga+k_{\ga/2}\geq0,\, \alpha \in R^0\right\}
\end{align*}
Recall that we write $T=T_uA$ for the polar decomposition of $T$, where $T_u,A$ are given by \eqref
{eq:polar T}. If $k\in\Ck^{T_u}_+$ (resp. $k\in\Ck^A_
+$), define a $W$--invariant positive "Weyl density function" on $T_u$ (resp. $A$) by
\begin{align*}
\delta(k;t)&:=\prod_{\alpha>0}|t^{\frac12\alpha}-t^{-\frac12\alpha}|^{2k_{\alpha}}=
\prod_{\alpha>0}(2-t^{\alpha}-t^{-\alpha})^{k_{\alpha}}\\
\intertext{and}
\delta(k;a)&:=\prod_{\alpha>0}|a^{\frac12\alpha}-a^{-\frac12\alpha}|^{2k_{\alpha}}=
\prod_{\alpha>0}(a^{\alpha}+a^{-\alpha}-2)^{k_{\alpha}}
\end{align*}

\subsection{The inner product spaces}\label{ss:inner product}

We assume that $k\in \Ck^{T_u}_+$, so that $\delta(k;t)$ is an integrable function on $T_u$. 
Define a pre-Hilbert space structure $\langle\cdot,\cdot\rangle_k$ on $\bbc[T_u]$ by
\begin{equation*} 
\langle f,g \rangle_k=|W|^{-1}\int_{T_u} f(t)\overline{g(t)}\delta(k;t)dt=:|W|^{-1}\int_{T_u} f(t)\overline{g(t)}d\mu^T(k;t)
\end{equation*}
with $dt$ the normalized Haar measure on $T_u$.

Similarly, if $k\in \Ck^A_+$ then $\delta(k;a)$ is locally integrable function on $A$, and we define $(\cdot,\cdot)_k$ on $\Cc^\infty_c(A)$ by: 
\begin{equation*} 
( f,g )_k=|W|^{-1}\int_{A} f(a)\overline{g(a)}\delta(k;a)da
=:|W|^{-1}\int_{A} f(a)\overline{g(a)}d\mu^A(k;a)
\end{equation*} 
with $da$ the Haar measure on the vector group $A$ such that $2\pi Q^\vee$ has covolume $1$.
\subsection{$*$--structures and $*$--representations}
\begin{thm}\label{thm:adjoint}
The following adjunction formulas hold for the Dunkl--Cherednik operators $\{T(\xi,k)\}_{\xi\in\ft}$.
\begin{itemize}
\item[(i)] If $f,g\in \bbc[T]$, then \[\langle T(\xi,k)f,g \rangle_k=\langle f,T(\overline{\xi},k)g \rangle_k\]
\item[(ii)] If $f,g\in C^\infty(A)$ and one of $f,g$ belongs to $C_c^\infty(A)$, then
\[(T(\xi,k)f,g)_k=(f,{}^{w_0}T(-\overline{w_0(\xi)},k)g)_k\]
\end{itemize}
\end{thm}
The above formulas give rise to two distinct anti-linear anti--involutions on the graded affine Hecke algebra
$\bbh=\bbh(R_+,k)$.

\begin{thm}
The assignments
\[\xi^\bullet=\overline{\xi}\aand w^\bullet=w^{-1}\]
define an anti--linear anti--involution $\bullet$ of $\bbh$, thus turning it into a star algebra. Moreover,
the representation $\eta_u$ is pre--unitary with respect to $\bullet$ \ie for all $h\in \bbh$ and $f,g\in
\bbc[T]$  we have $\langle\eta_u(h)(f),g\rangle_k=\langle f,\eta_u(h^\bullet)g\rangle_k$.  
\end{thm}

\begin{thm}\label{thm:star}
The assignments
\[\xi^*=-w_0.w_0(\overline{\xi}).w_0\aand w^*=w^{-1}\]
define an anti--linear anti--involution $*$ of $\bbh$. The representation $\eta_v$ is pre--unitary with 
respect to $*$ \ie for all $h\in \bbh$ and $f,g\in C_c^\infty(A)$  we have $(\eta_v(h)(f),g)_k=(f,\eta_v
(h^*)g)_k$.  
\end{thm}

The unitary representation $\eta_u(k)$ on $L^2(T_u,\delta(t,k)dt)$ of $(\bbh(R_+,k),\bullet)$ can be decomposed
as a direct sum of spherical irreducible unitary $\bbh$--representations for $\bullet$. Similarly, the unitary
representation $\eta_v(k)$ on $L^2(A,\delta(k;a)da)$ of $(\bbh,*)$ can be decomposed as a direct Hilbert
integral of spherical irreducible unitary $\bbh$--representations with respect to  $*$. To study these
decompositions in more detail, we will first consider the joint eigenfunctions of the Dunkl operators.

\subsection{Harmonic analysis; joint eigenfunctions}\label{Section:HA}

The anti--involution $*$ of $\bbh$ is well--known from the theory of unitary representations of $p$--adic reductive
groups. It was shown by Barbasch and Ciubotaru \cite{BaCi} that these two anti--involutions are the only ones
which are equal to the inverse on $W$, provided $R$ is irreducible. The relevant $\bbh$-modules for the spectral
decomposition of the representations $\eta_u$, $\eta_v$ must in any case be unitary (for $\bullet$ in $\bbc[T]$,
for $*$ in $C_c^\infty(A)$), and $W$--spherical (the latter follows easily from \cite[Lemma 3.14]{OGHA}).
Additional requirements apply, as we will see.   

We remark that the classification of irreducible spherical unitary representations of $(\bbh,\bullet)$ is easy \cite{BaCi}.
The classification of the spherical $(\bbh,*)$--unitary modules is a difficult problem (a lot of interesting results in this direction 
have been obtained by Ciubotaru, cf. \cite{CiuUn}). 
For $k\in \Ck^A_+$,
the spectral measure of $L^2(A,\gd(a,k)da)$ has support in the unitary principal series \cite{OGHA}. 
For $k\in\Ck$ negative, with $|k|$ small, discrete series occurs (supported in the set of tempered spherical irreducible representations
of $(\bbh,*)$) \cite{ODS}. 

In order to study the decomposition of $L^2(A,\delta(k;a)da)$ in irreducible representations of $\bbh(R_+,k)$,
we need to study the joint eigenfunctions of the Dunkl--Cherednik operators $T(\xi,k)$ in $C^\infty(A)$.
\begin{thm}[\cite{OGHA}] Let $\Ck^+_\bbc=\{k\in\Ck\mid \textup{Re}(k)\in \Ck^A_+$\}.  
\begin{enumerate}
\item[(1)] There is an open $W$--invariant neighborhood $U\subset T_u$ of $1$, and a unique holomorphic
function $G$ on $\ft^*\times \Ck^+_\bbc\times A\cdot U$ 
with the following properties: 
\begin{itemize}
\item[(i)] $G(\gl,k;1)=1$.
\item[(ii)] $T(\xi,k)G(\gl,k;-)=\gl(\xi)G(\gl,k;-)$ for any $\xi\in\ft$.
\end{itemize}
\item[(2)] If $F$ is the holomorphic function on $\ft^*\times\Ck_\bbc^+\times A\cdot U$ defined by
\[F(\gl,k;t)=\frac{1}{|W|}\sum_{w\in W}G(\gl,k;wt)\]
then $F(\gl,k;t)$ is the hypergeometric function for root systems introduced in \cite{HO} (cf. \cite{HecSch}, \cite{OLect}).
The latter can be uniquely characterised by the requirements (\cite{HO,OGau}):
\begin{itemize}
\item[(i)] $F(\gl,k;-)$ is $W$--invariant and holomorphic in a neighborhood of $A\subset T$, and  $F(\gl,k;1)=1$.  
\item[(ii)] $D(p,k)F(\gl,k;-)=p(\gl)F(\gl,k;-)$ for any $p\in S(\ft)^W$.
\end{itemize}
\end{enumerate}
\end{thm}
We remark that both $G$ and $F$ have meromorphic continuations to $\ft^*\times \Ck\times A\cdot U$ \cite{OGHA}.
If $(R,k)$ corresponds to the Riemannian symmetric space $X=G/K$, with $A\subset X$ a maximal flat subspace,
then $F(\gl,k;t)=\varphi_\gl(t^{1/2})$ (with $t\in A\cdot U$, and $\varphi_\gl$ the elementary spherical function).
If $R=\{\pm\alpha,\pm2\alpha\}$ is the root system of type $\mathsf{BC}_1$, $F(\lambda,k_1,k_2;t)$ is Gauss' 
hypergeometric function ${}_2F_1(a,b,c;z)$ with $z=1/4-1/2(\alpha(t)+\alpha(t^{-1}))$, and 
\[
a=\frac{1}{2}(\gl+\rho(k))(\alpha^\vee)\qquad
b=\frac{1}{2}(-\gl+\rho(k))(\alpha^\vee)\qquad
c=\frac{1}{2}+k_\alpha+k_{2\alpha}
\]
which explains the name of the function $F$.

\subsection{Variations on Harish--Chandra's $c$--function}

The asymptotic expansion on $A_+$ (cf. (\ref{eq:Aplus})) of the hypergeometric function
$F$ was studied in \cite{HO} and \cite{OGau}, and this leads to the following  formula if $\textup{Re}(\gl)\in\fa_+$
and $a\in A_+$: 
\begin{equation}
c(\lambda,k)=\lim_{t\to \infty}a^{t(\rho(k)-\gl)}F(\gl,k;a^t)=\frac{\tilde c(\gl,k)}{\tilde c(\rho(k),k)}
\end{equation}
with 
\begin{equation*}
\tilde c(\lambda,k)=\prod_{\alpha \in R_+}
\frac{\Gamma\left( \lambda(\alpha^\vee)+\frac{1}{2}k_{\alpha/ 2}\right)}
{\Gamma\left( \lambda(\alpha^\vee)+\frac{1}{2}k_{\alpha/2}+k_\alpha
\right)}
\end{equation*}
This is a meromorphic continuation over the parameter space $\Ck$ of the marvellous Harish--Chandra $c$--function
(cf. \eqref{eq:lim}), in its product form as provided by the work of Gindikin and Karpelevi\v{c} \cite{GiKaI,GiKaII}.
To make the harmonic analysis with respect to $\bbh$ precise, we will now introduce some variations on the
$c$--function.
\begin{df}\hfill 
\begin{itemize}
\item[(a)] For $w\in W$, define $\delta_w: R_+\to\{0,1\}$ by
\[ \delta_w(\alpha ) = \left\{\begin{array}{ll} 0 &\text{ if }w(\alpha )>0\\1&\text{ if }w(\alpha )<0\end{array}\right. \]
\item[(b)] Define
\begin{equation*}
c_w^*(\lambda,k)=\prod_{\alpha\in R_+}
\frac{\Gamma\left( -\lambda(\alpha^\vee)-\frac{1}{2}k_{\alpha/2}-k_\alpha +\delta_w(\alpha)\right)}
{\Gamma\left( -\lambda(\alpha^\vee)-\frac{1}{2}k_{\alpha/2}+\delta_w(\alpha)\right)}
\end{equation*}
\item[(c)] Define
\begin{equation*}
\tilde c_w(\lambda,k)=\prod_{\alpha \in R_+}
\frac{\Gamma\left( \lambda(\alpha^\vee)+\frac{1}{2}k_{\alpha/ 2}+\delta_w(\alpha )\right)}
{\Gamma\left( \lambda(\alpha^\vee)+\frac{1}{2}k_{\alpha/2}+k_\alpha +\delta_w(\alpha )
\right)}
\end{equation*}
\item[(d)] Put $\tilde c(\lambda,k):=\tilde c_e(\lambda,k),\; c^*(\lambda,k):=c_{w_0}^*(\lambda,k)$.
\end{itemize}
\end{df}
\subsection{Compact case; decomposition of $L^2(T_u,\delta(k;t)dt)$}
The main reference for the material in this section is \cite{OGHA}. We assume that $k\in\Ck^{T_u}_+$.
Let us first look at the spectral decomposition in the compact case, i.e. the representation of $(\bbh^D_u(R_+,k),
\bullet)$
on $L^2(T_u,\delta(k;t)dt$. 
The first step is to consider the joint eigenfunctions of the Dunkl operators on $\bbc[T_u]$.

To this end, we introduce the following partial order $\lhd$ on the weight lattice due to
Heckman \cite{HeB}. For $\mu\in P$, let $\{\mu_+\}= P_+\cap W\mu$ and let $w^\mu\in W$ be the
element of maximal length such that $\mu=w^\mu\mu_+$. Given $\mu,\nu\in P$, we set
$\nu\lhd\mu$ if $\nu_+\leq \mu_+$, or $\nu_+=\mu_+$ and $w^\nu\leq_W w^\mu$, where
$\leq$ is the standard lexicograhic order on $P$ and $\leq_W$ the Bruhat order on $W$.

\begin{thm}(Heckman \cite{HeB}, see \cite[Sect. 2]{OGHA})
For any $\mu\in P$, there is a unique joint eigenfunction for the operators $\{T(\xi,k)\}_{\xi
\in\ft}$ of the form 
\[E(\mu,k;t)=
\sum_{\nu\lhd\mu}c_\nu t^\nu
\qquad\text{with $c_\mu=1$}\]
The corresponding joint eigenvalue is the element of $P\subset\ft^*$ given by
\Omit{
For $\mu\in P$, let $\{\mu_+\}= P_+\cap W\mu$ and let $w^\mu\in W$ be the element 
of maximal length such that $\mu=w^\mu\mu_+$. There exists a unique joint $T(\xi,k)$--eigenfunction 
of the form $E(\mu,k;t)=t^\mu+\sum_{\nu\lhd\mu}c_\nu t^\nu$ where $\nu\lhd\mu$ iff 
$\nu_+\leq \mu_+$, or $\nu_+=\mu_+$ and $w^\nu\leq_W w^\mu$. The joint eigenvalue is:
\begin{align*}
T(\xi,k)E(\mu,k)&=w^\mu(\mu_++\rho(k))(\xi)E(\mu,k)\\
&=(\mu+w^\mu(\rho(k)))(\xi)E(\mu,k)
\end{align*} 
}
\[w^\mu(\mu_++\rho(k))=\mu+w^\mu\,\rho(k)\]
\end{thm} 
From our condition $k\in\Ck_+^{T_u}$ it follows that $\rho(k)\in \fa^*_+$, which implies that the joint
eigenvalues $\{w^\mu(\mu_++\rho(k))\}_{\mu\in P}$ are all distinct.
Of course these polynomial eigenfunctions are - up to normalisation - just specialisations of the family $\{G(\gl,k;\cdot)\}$ of general eigenfunctions. 
More precisely, we have:
\begin{proposition} [\cite{OGHA}] If $k\in\mathcal{K}_+$, up to normalization the only joint $T(\xi,k)$--eigenfunctions in $\bbc[P]$ are 
of the form $G(w(\gl+\rho(k)),k)$ with $\gl\in P_+$ and $w$ maximal in the coset $wW_\gl$.
We have 
\begin{equation*}
E(w\gl,k)=\frac{\tilde{c}_{w_0}(\rho(k),k)}{\tilde{c}_w(w(\gl+\rho(k))}G(w(\gl+\rho(k)),k).
\end{equation*}
\end{proposition}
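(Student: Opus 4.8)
The plan is to first establish, by a soft argument, that every polynomial joint eigenfunction is a nonzero scalar multiple of one of the asserted $G(w(\lambda+\rho(k)),k;\cdot)$, and then to identify that scalar by an asymptotic computation. For the first part I would record that $\{E(\mu,k)\}_{\mu\in P}$ is a $\bbc$-basis of $\bbc[P]$: by the preceding theorem $E(\mu,k)=t^{\mu}+\sum_{\nu\lhd\mu}c_{\nu}t^{\nu}$, and since the dominant weights $\preceq\mu_{+}$ form a finite set and each $W$-orbit is finite, every weight has only finitely many $\lhd$-predecessors, so the transition from the monomials $\{t^{\mu}\}$ to $\{E(\mu,k)\}$ is triangular for any linear refinement of $\lhd$ well-ordered from below, hence invertible. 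The operators $T(\xi,k)$, $\xi\in\ft$, commute and act on $E(\mu,k)$ through the character $\chi_{\mu}\colon\xi\mapsto w^{\mu}(\mu_{+}+\rho(k))(\xi)=(\mu+w^{\mu}\rho(k))(\xi)$; for $k\in\Ck_{+}$ (hence in $\Ck^{T}_{+}$, so that $\rho(k)\in\fa^{*}_{+}$) these characters are pairwise distinct, as already recorded. Thus $\bbc[P]=\bigoplus_{\mu}\bbc\,E(\mu,k)$ is a direct sum of one-dimensional joint eigenspaces, and every joint eigenfunction lying in $\bbc[P]$ is a nonzero scalar multiple of a single $E(\mu,k)$. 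Since $\mu\mapsto(\mu_{+},w^{\mu})$ is a bijection of $P$ onto the pairs $(\lambda,w)$ with $\lambda\in P_{+}$ and $w$ the longest element of $wW_{\lambda}$, under which $\mu=w\lambda$ and $\chi_{\mu}=w(\lambda+\rho(k))$, this yields the classification --- provided one knows in addition that each $G(w(\lambda+\rho(k)),k;\cdot)$ really is a Laurent polynomial (it is then forced to be proportional to $E(w\lambda,k)$) together with the value of the proportionality constant.

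For these two remaining points I would use the nonsymmetric analogue of Harish--Chandra's asymptotic expansion of $\varphi_{\gl}$ on $A_{+}$: near infinity in the positive chamber $G(\nu,k;\cdot)$ expands as a $W$-sum of plane-wave series $a^{w\nu-\rho(k)}\sum_{\mu\in Q_{-}}(\cdots)a^{\mu}$ whose coefficients are explicit ratios of Gamma functions, the leading normalisations being the functions $\tilde c_{w}(\nu,k)$ of the previous subsection; symmetrising recovers $F(\nu,k;\cdot)$ together with Harish--Chandra's $c$-function (all this is in \cite{OGHA}). When $\lambda\in P_{+}$ and $\rho(k)\in\fa^{*}_{+}$, so $\lambda+\rho(k)$ is dominant, and $w$ is the longest element of $wW_{\lambda}$, the Gamma factors in these coefficients develop poles at $\nu=w(\lambda+\rho(k))$ which annihilate all but finitely many of the monomials occurring in $G(\nu,k;\cdot)$; the surviving part is a Laurent polynomial, which by the previous paragraph is a scalar multiple of $E(w\lambda,k)$. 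Evaluating at $t=1$, where $G(\nu,k;1)=1$ by its defining normalisation, identifies the scalar and gives $E(w\lambda,k;1)=\tilde c_{w_{0}}(\rho(k),k)/\tilde c_{w}\bigl(w(\lambda+\rho(k)),k\bigr)$, which is the displayed identity.

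I expect this last step to be the main obstacle: one must keep exact control of the Gamma-factor in the nonsymmetric expansion of $G$, of the truncation mechanism as $\nu$ is specialised to $w(\lambda+\rho(k))$ along the walls of the chamber, and of the resulting normalising constant; this is precisely where the detailed analysis of the series and of its $c$-functions in \cite{OGHA} is used. A convenient way to organise the bookkeeping is through the graded affine Hecke algebra $\bbh(R_{+},k)$: Cherednik's intertwining elements carry a joint eigenfunction of spectral parameter $\nu$ to one of parameter $s_{i}\nu$ up to an explicit rational factor, and running a reduced expression for $w$ telescopes these factors into the ratio $\tilde c_{w_{0}}(\rho(k),k)/\tilde c_{w}(w(\lambda+\rho(k)),k)$, the degeneration of the relevant intertwiner at a wall being what forces polynomiality of the specialised $G$. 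As a sanity check one can specialise to rank one, where $F(\gl,k_{1},k_{2};x)={}_{2}F_{1}(a,b,c;z)$ and $E$, $G$ become non-symmetric Jacobi polynomials; the constant then reduces to a ratio of Pochhammer symbols coming from Gauss's summation of a terminating ${}_{2}F_{1}$.
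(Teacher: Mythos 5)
The survey itself states this proposition without proof, citing \cite{OGHA}, so there is no internal argument to compare with; judging your proposal on its own merits, the first half is fine: the triangularity of $E(\mu,k)=t^\mu+\sum_{\nu\lhd\mu}c_\nu t^\nu$ with finite lower sets gives a basis of $\bbc[P]$, and the distinctness of the joint eigenvalues $\mu+w^\mu(\rho(k))$ for $k\in\Ck_+$ (recorded just before the proposition) shows that the polynomial joint eigenfunctions are exactly the scalar multiples of the $E(\mu,k)$, with $\mu\leftrightarrow(\mu_+,w^\mu)$ giving the asserted parametrization.

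The gap is in the second half, i.e.\ precisely in the part that carries the content of the displayed identity. First, identifying $E(w\gl,k)$ with a multiple of $G(w(\gl+\rho(k)),k)$ is not settled by your truncation picture: poles of the Gamma factors in the $\tilde c_w$-coefficients do not ``annihilate monomials'' of $G$; what one must show is that at these non-generic spectral values the specialization of $G$ is still characterized by regularity at $1$ plus the normalization (the uniqueness in the theorem defining $G$ is uniqueness as a holomorphic function of $(\gl,k,t)$ jointly, not pointwise in the spectral parameter), or equivalently that the relevant Harish--Chandra-type series terminates -- and this is exactly the input you are importing wholesale from \cite{OGHA}. Second, the determination of the constant is circular as written: since $G(\nu,k;1)=1$, evaluating at $t=1$ merely renames the proportionality constant as $E(w\gl,k;1)$; the substance of the formula is the evaluation $E(w\gl,k;1)=\tilde c_{w_0}(\rho(k),k)/\tilde c_w(w(\gl+\rho(k)),k)$, which you never derive. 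Your two suggested mechanisms both need more than you supply: a leading-coefficient comparison with the asymptotic expansion on $A_+$ does not see the monic coefficient of $t^{w\gl}$ but rather (taking $u=w^{-1}$ in the exponents $u\nu-\rho(k)$, $\nu=w(\gl+\rho(k))$) the coefficient of the dominant monomial $t^{\gl}$ in $E(w\gl,k)$, which is itself unknown at this stage; and the intertwiner telescoping only relates the values $E(u\gl,k;1)$ within one $W$-orbit of $\gl$, so it requires an absolute normalization per orbit -- in Opdam's treatment this is supplied by the symmetric evaluation formula (the Gauss-summation result of \cite{OGau}, proved with shift operators), an input absent from your sketch. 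As it stands you have proved the qualitative classification but not the identity with the ratio of $\tilde c$-functions.
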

These polynomial eigenfunctions from by construction a complete orthogonal set in $L^2(T_u,\delta(k;t)dt)$. 
As the joint eigenvalues $\mu+w^\mu(\rho(k))$ are all real and - provided $k\in \Ck_+$ - distinct, these polynomials 
are mutually orthogonal.  

Given $\gl\in P_+$, let $w_\gl\in W$ be the longest element in the isotropy group $W_\gl$ of $\gl$, and define $\tilde{\gl}=\gl+w_\gl(\rho(k))$.
Let $\bbh_\gl=\bbh(R_{\gl,+},k)\subset \bbh(R_+,k)$ be the "Levi subalgebra" associated to $R_\gl$, the root system of the 
standard Levi Weyl group $W_\gl$. Let $\bbc_{\tilde{\gl}}$ denote the one dimensional $W_\gl$--spherical representation of $\bbh_\gl$, 
with joint eigenvalue $\tilde{\gl}$ for the commuting elements $\xi\in S(\fa)\subset \bbh_\gl$.  We now define:
\begin{equation}\label{eq:ind}
V_{\tilde{\gl}}:=\textup{Ind}_{\bbh_\gl}^\bbh(\bbc_{\tilde{\gl}})=\bbh\otimes_{\bbh_\gl}\bbc_{\tilde{\gl}}
\end{equation}
\begin{thm}[\cite{OGHA}] Let $k\in \Ck_+$.
The $\bbh$-module $V_{\tilde{\gl}}$ with $\gl\in P_+$ is irreducible and $W$--spherical. 
Moreover, there exists a unique $\bbh$-module isomorphism 
\begin{equation*}
j:V_{\tilde{\gl}} \xrightarrow{\simeq} \Ce_\lambda:=\langle E(w\gl,k)\rangle_{w\in W}\subset \bbc[T_u]
\end{equation*} 
such that $j(1\otimes 1)=E(\gl,k)$.

There exists a multiplicity free orthogonal decomposition:
\begin{equation}
L^2(T_u,\delta(k;t)dt)= \widehat\bigoplus_{\gl\in P_+}\Ce_\gl 
\end{equation}
of the unitary $(\bbh,\bullet)$-representation $L^2(T_u,\delta(k;t)dt)$ into 
spherical irreducible unitary subrepresentations (where the algebraic direct sum 
equals $\bbc[T_u]\subset L^2(T_u,\delta(k;t)dt)$).
The functions $E(\gl,k;\cdot)$ with $\gl\in P$ are mutually orthogonal, and: 
\begin{equation*}
\Vert E(w\gl,k)\Vert_k^2=
\frac{c_{ww_\gl}^*\left( -(\gl +\rho),k\right)}{\tilde c_{ww_\gl}(\gl +\rho ,k)}
\end{equation*}
\end{thm}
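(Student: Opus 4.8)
Throughout, I will lean on two facts already established above: the theorem on polynomial eigenfunctions (the $E(\mu,k)$ have leading term $t^\mu$, strictly $\lhd$-lower remaining terms, joint $T(\xi,k)$-eigenvalue $\mu+w^\mu\rho(k)$, and — for $k\in\Ck_+$ — pairwise distinct such eigenvalues) and the self-adjointness formula Theorem~\ref{thm:adjoint}(i). First, for $\xi\in\fa$ the operator $T(\xi,k)$ is self-adjoint on $(\bbc[T_u],\langle\cdot,\cdot\rangle_k)$ (here $\overline\xi=\xi$) and the $T(\xi,k)$ commute, so joint eigenvectors for distinct real joint eigenvalues are orthogonal; hence $\{E(\mu,k)\}_{\mu\in P}$ is an orthogonal set, and by triangularity a $\bbc$-basis of $\bbc[P]=\bbc[T_u]$. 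Since the weight $\delta(k;t)$ is bounded with full-measure support, $\bbc[T_u]$ is dense in $L^2(T_u,\delta(k;t)dt)$ (Stone--Weierstrass), so $\{E(\mu,k)\}_{\mu\in P}$ is an orthogonal Hilbert basis; grouping $\mu$ according to $\mu_+$ gives the orthogonal decomposition $\bbc[T_u]=\bigoplus_{\gl\in P_+}\Ce_\gl$ with Hilbert completion $\widehat\bigoplus_\gl\Ce_\gl$. Finally, since $Z(\bbh)\cong S(\ft)^W$ acts diagonally in the eigenbasis, $\Ce_\gl$ is precisely the $Z(\bbh)$-isotypic component of $(\bbc[T_u],\eta_u)$ for the central character $W(\gl+\rho(k))$; hence each $\Ce_\gl$ is an $\bbh$-submodule, and — $\gl+\rho(k)$ being strictly dominant — these characters are pairwise distinct, so the decomposition is multiplicity free.

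To pin down $\Ce_\gl$ as a module, introduce the $W$-stable subspaces $V_{\le\gl}$ and $V_{<\gl}$ spanned by the monomials $t^\nu$ with $\nu_+\le\gl$, resp.\ $\nu_+<\gl$, in the dominance order on $P_+$. By triangularity they coincide with the spans of the $E(\mu,k)$ with $\mu_+\le\gl$, resp.\ $<\gl$, hence are $T(\xi,k)$-stable, hence $\bbh$-submodules, and the composite $\Ce_\gl\hookrightarrow V_{\le\gl}\twoheadrightarrow V_{\le\gl}/V_{<\gl}$ is an $\bbh$-module isomorphism. As a $W$-module $V_{\le\gl}/V_{<\gl}$ is the permutation module $\bbc[W\gl]\cong\Ind_{W_\gl}^W\bbc_{\mathrm{triv}}$; so $\dim\Ce_\gl=|W/W_\gl|$, and $\Ce_\gl$ is $W$-spherical with one-dimensional $W$-fixed space. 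Because $\gl$ is the $\lhd$-minimal element of the orbit $W\gl$ (for $\gl$ dominant, $w^\gl=w_\gl$, the longest element of $W_\gl$), the eigenfunction $E(\gl,k)$ lies in $t^\gl+V_{<\gl}$; since $V_{<\gl}$ is $W$-stable, $W_\gl$ fixes $t^\gl$, and $\Ce_\gl\cap V_{<\gl}=0$, it follows that $E(\gl,k)$ is $W_\gl$-fixed, while its $T(\xi,k)$-eigenvalue is $\gl+w_\gl\rho(k)=\tilde\gl$.

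Thus $E(\gl,k)$ underlies an $\bbh_\gl$-embedding $\bbc_{\tilde\gl}\hookrightarrow\bbc[T_u]$, which by the universal property of $\bbh\otimes_{\bbh_\gl}(-)$ extends to an $\bbh$-map $j\colon V_{\tilde\gl}\to\bbc[T_u]$ with $j(1\otimes 1)=E(\gl,k)$ and image $\bbh\,E(\gl,k)\subseteq\Ce_\gl$. Reducing modulo $V_{<\gl}$, the classes of $\{w\,E(\gl,k)\}_{w\in W}$ are the $|W/W_\gl|$ monomials $\overline{t^{w\gl}}$, which form a basis of $V_{\le\gl}/V_{<\gl}$; hence $\bbh\,E(\gl,k)=\Ce_\gl$, so $j$ is surjective, and since $\bbh$ is free of rank $|W/W_\gl|$ over $\bbh_\gl$ we get $\dim V_{\tilde\gl}=|W/W_\gl|=\dim\Ce_\gl$, so $j$ is an isomorphism; uniqueness is immediate as $1\otimes 1$ generates $V_{\tilde\gl}$. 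Irreducibility now follows from the inner product: $\Ce_\gl$ is a unitary $(\bbh,\bullet)$-module, so for a nonzero submodule $N$ the orthogonal complement $N^\perp$ for $\langle\cdot,\cdot\rangle_k$ is again a submodule and $\Ce_\gl=N\oplus N^\perp$; the $\tilde\gl$-eigenspace of $S(\ft)$ in $\Ce_\gl$ is one-dimensional (among the distinct eigenvalues $w\tilde\gl$, $w$ a minimal coset representative of $W/W_\gl$, only $w=e$ gives $\tilde\gl$) and is spanned by $E(\gl,k)$; projecting this eigenvector to the $S(\ft)$-stable summands $N,N^\perp$ shows it lies in one of them, and as it generates $\Ce_\gl$ that summand is everything. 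Hence $\Ce_\gl\cong V_{\tilde\gl}$ is irreducible, completing the structural part.

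It remains to establish the norm formula, which I expect to be the main technical obstacle. For a simple reflection $s_i$ let $\sigma_i=s_i+k_i(\alpha_i^\vee)^{-1}$ be the intertwining element of $\bbh\otimes_{S(\ft)}\mathrm{Frac}(S(\ft))$; it satisfies $\sigma_i\,p=s_i(p)\,\sigma_i$ for $p\in S(\ft)$, the quadratic relation $\sigma_i^{\,2}=1-k_i^{2}(\alpha_i^\vee)^{-2}$, and — $k$ being real — $\sigma_i^{\,\bullet}=\sigma_i$, the last being a consequence of Theorem~\ref{thm:adjoint}(i). Applied through $\eta_u$ to $E(\mu,k)$, the intertwiner produces a known scalar multiple $\lambda_i(\mu)E(s_i\mu,k)$, where $\lambda_i(\mu)$ is an explicit function of $k_i$ and of $\langle\alpha_i^\vee,\mu+w^\mu\rho(k)\rangle$; combining this with $\sigma_i^{\,\bullet}=\sigma_i$ and the quadratic relation yields the recursion
\begin{equation*}
\|E(s_i\mu,k)\|_k^{2}=|\lambda_i(\mu)|^{2}\Bigl(1-\frac{k_i^{2}}{\langle\alpha_i^\vee,\ \mu+w^\mu\rho(k)\rangle^{2}}\Bigr)\|E(\mu,k)\|_k^{2}.
\end{equation*}
Iterating along a reduced word carrying $\gl$ to $w\gl$ inside the orbit $W\gl$ and telescoping the accumulated $\Gamma$-factors via $\Gamma(x+1)=x\,\Gamma(x)$ turns the product into $c^{*}_{ww_\gl}(-(\gl+\rho(k)),k)\,/\,\tilde c_{ww_\gl}(\gl+\rho(k),k)$, once the base value $\|E(\gl,k)\|_k^{2}$ for $\gl\in P_+$ is known; that base value is a constant-term evaluation of Macdonald type. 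Supplying it, and carefully tracking the scalars $\lambda_i(\mu)$, is the only genuinely computational ingredient — everything else reduces to the soft module-theoretic arguments above.
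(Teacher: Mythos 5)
Your structural argument is sound, and it follows essentially the route of the cited source \cite{OGHA} (the paper itself gives no proof, only the citation): orthogonality of the $E(\mu,k)$ from Theorem \ref{thm:adjoint}(i) together with the distinctness of the real joint spectra for $k\in\Ck_+$; density of $\bbc[T_u]$; identification of $\Ce_\gl$ as the central-character component, hence an $\bbh$-submodule, with multiplicity freeness from the distinct dominant weights $\gl+\rho(k)$; the filtration $V_{<\gl}\subset V_{\leq\gl}$ to see $\Ce_\gl\simeq V_{\leq\gl}/V_{<\gl}$, $W$-sphericity via the permutation module, $W_\gl$-invariance of $E(\gl,k)$ with eigenvalue $\tilde\gl$, the induced map $j$ by Frobenius reciprocity, surjectivity plus the dimension count $\dim V_{\tilde\gl}=|W/W_\gl|$, and irreducibility from $\bullet$-unitarity plus the multiplicity-one occurrence of the eigenvalue $\tilde\gl$. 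These steps are correct as written (you use silently, but correctly, that $\{w\leq_W w_\gl\}=W_\gl$, and that $\bbc_{\tilde\gl}$ is a well-defined $\bbh_\gl$-module, which the paper's setup grants).

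The genuine gap is the norm formula, which is the actual quantitative assertion of the theorem and is only sketched in your last paragraph. Your intertwiner strategy ($\sigma_i=s_i+k_i(\alpha_i^\vee)^{-1}$, self-adjointness for $\bullet$, the quadratic relation, and a recursion along a reduced word moving through the orbit $W\gl$) is indeed the right mechanism, but you neither compute the proportionality scalars $\lambda_i(\mu)$ nor carry out the telescoping, so the Gamma-quotient with the shifts $\delta_w(\ga)$ in $c^*_{ww_\gl}$ and $\tilde c_{ww_\gl}$ is never actually produced. More seriously, the recursion only reduces $\Vert E(w\gl,k)\Vert_k^2$ to the base value $\Vert E(\gl,k)\Vert_k^2$ for dominant $\gl$, and that base value is not a routine input: for $\gl=0$ it is $\langle 1,1\rangle_k$, i.e. Macdonald's constant term formula, and for general dominant $\gl$ it is equivalent to the norm formulas for the symmetric Jacobi polynomials, both of which are proved via the hypergeometric shift operators of \cite{O89} (this is exactly how \cite{OGHA} completes the computation). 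Asserting it as "a constant-term evaluation of Macdonald type" leaves unproven precisely the deepest ingredient of the statement, so as it stands the final displayed formula for $\Vert E(w\gl,k)\Vert_k^2$ is not established.
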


\subsection{Non-compact case; decomposition of $L^2(A,\delta(k;a)da)$}

The main reference for this section is \cite{OGHA}. We assume that $k\in \Ck_+^A$.  Let
$\bba=S(\fa)\subset\bbh$, a commutative subalgebra. For any $\gl\in\fa^*_\bbc$, consider
the one--dimensional $\bba$-module $\bbc_\gl$, and define the $\bbh$--module
\begin{equation*}
I_\gl:=\textup{Ind}_\bba^\bbh(\bbc_\gl) =\bbh\otimes_\bba\bbc_\gl
\end{equation*}
Then, there is a $*$--invariant nondegenerate sesquilinear pairing 
\begin{equation*}I_\gl\times I_{-\overline{\gl}}\to\bbc
 \end{equation*}
 defined by 
 $(w_1\otimes 1,w_2\otimes 1)=\delta_{w_1,w_2}$ for all $w_1,w_2\in W$. In particular, for $\gl\in i\fa^*$,
 $I_\gl$ admits a $*$ unitary structure defined by this pairing.  

We have the representation $\eta_v$ of $\bbh$ defined on the dense subspace $C^\infty_c(A)\subset L^2(A,\delta(k;a))$.  
This inclusion provides the representation space $C^\infty_c(A)$ with a pre-Hilbertian structure with respect to which 
$\eta_v$ is a $*$--representation, thanks to Theorem \ref{thm:star}. 
We have \cite[Thm. 9.10]{OGHA}:

\begin{thm} Let $k\in\Ck^A_+$. For any $\gl\in \fa^*_\bbc$, there is a morphism $\Cj_{\gl,k}:
I_\gl\to C^\infty(A)$ of $\bbh$--modules defined by $\Cj_{\gl,k}(w\otimes 1)={}^wG(\gl,k;-)$. 
\end{thm}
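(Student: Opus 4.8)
The plan is to obtain $\Cj_{\gl,k}$ from the universal property (Frobenius reciprocity) of the induced module $I_\gl=\Ind_\bba^\bbh(\bbc_\gl)=\bbh\otimes_\bba\bbc_\gl$: for any $\bbh$-module $V$ one has a natural isomorphism $\Hom_\bbh(I_\gl,V)\xrightarrow{\simeq}\Hom_\bba(\bbc_\gl,\Res_\bba V)$, $\Phi\mapsto\Phi(1\otimes 1)$. So to construct the map it suffices to produce a vector in the $\bbh$-module $C^\infty(A)$ on which the commutative subalgebra $\bba=S(\fa)\subset\bbh$ acts through the character $\gl$. Here $C^\infty(A)$ is viewed as an $\bbh$-module via $\eta_v$: the operators $T(\xi,k)$ and the $w\in W$ preserve $C^\infty(A)$ (not only $C^\infty_c(A)$), and the relations (i)--(iv) defining $\bbh$ hold for them on $C^\infty(A)$ by exactly the computation that makes $\eta_v$ a representation on $C^\infty_c(A)$ in Theorem \ref{thm:etav}; the continuity statement there is irrelevant for the module structure.

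The vector I would use is $G(\gl,k;\cdot)$ restricted to $A$. Since $1\in U$ we have $A\subseteq A.U$, so by the theorem characterising $G$ this restriction is a well-defined real-analytic function on all of $A$, i.e. an element of $C^\infty(A)$. Property (ii) of that theorem gives $T(\xi,k)\,G(\gl,k;\cdot)=\gl(\xi)\,G(\gl,k;\cdot)$ for all $\xi\in\ft$; restricting to $A$ and using $\eta_v(\xi)=T(\xi,k)|_{C^\infty(A)}$ yields $\eta_v(\xi)\,G(\gl,k;\cdot)=\gl(\xi)\,G(\gl,k;\cdot)$ for all $\xi\in\fa$. Because $\bba$ is generated as an algebra by $\fa$ and $\eta_v$ is an algebra homomorphism on $\bbh$, it follows that $\eta_v(a)\,G(\gl,k;\cdot)=\gl(a)\,G(\gl,k;\cdot)$ for every $a\in\bba$, where $\gl$ denotes the corresponding character of $\bba$. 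Thus $1\mapsto G(\gl,k;\cdot)$ is a morphism of $\bba$-modules $\bbc_\gl\to\Res_\bba C^\infty(A)$, and by the universal property it extends (uniquely) to an $\bbh$-map $\Cj_{\gl,k}\colon I_\gl\to C^\infty(A)$, $h\otimes 1\mapsto\eta_v(h)\,G(\gl,k;\cdot)$. Specialising $h=w\in W\subset\bbh$ gives $\Cj_{\gl,k}(w\otimes 1)=\eta_v(w)\,G(\gl,k;\cdot)={}^wG(\gl,k;\cdot)$, which is the asserted formula; note the elements $w\otimes 1$, $w\in W$, do span $I_\gl$ since $\bbh=S\ft\otimes\bbc[W]$ and $S\ft$ acts on $\bbc_\gl$ through a character.

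The only well-definedness point to check is that $\eta_v(h)\,G(\gl,k;\cdot)=0$ whenever $h\otimes 1=0$ in $\bbh\otimes_\bba\bbc_\gl$, i.e. whenever $h=\sum_i h_i a_i$ with $h_i\in\bbh$, $a_i\in\bba$, $\gl(a_i)=0$; this is immediate from $\eta_v(a_i)\,G(\gl,k;\cdot)=\gl(a_i)\,G(\gl,k;\cdot)=0$ established above, so everything after the choice of the eigenvector is bookkeeping with the induced module. Accordingly, the one ingredient that is genuine analytic input rather than formal nonsense is the \emph{existence and regularity on $A$ of the joint eigenfunction $G(\gl,k;\cdot)$ for the full parameter range $k\in\Ck^A_+$, $\gl\in\fa^*_c$}: the theorem stated above only furnishes $G$ for $k\in\Ck_+$, and since $\Ck_+\subsetneq\Ck^A_+$ one must invoke the meromorphic continuation in $k$ together with its regularity on $\Ck^A_+$, which is part of the results imported from \cite{OGHA} (see also \cite{HO}, \cite{OGau}). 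I expect this to be the main obstacle; once it is granted, the construction of $\Cj_{\gl,k}$ is purely formal.
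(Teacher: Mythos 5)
Your construction is correct and is essentially the intended argument: the paper itself gives no proof of this statement (it simply cites \cite[Theorem 9.10]{OGHA}), and the proof amounts, as you say, to viewing $C^\infty(A)$ as an $\bbh(R_+,k)$-module via $\eta_v$ and applying Frobenius reciprocity to the $\bba$-eigenvector $G(\gl,k;\cdot)|_A$, which forces $\Cj_{\gl,k}(w\otimes 1)={}^wG(\gl,k;\cdot)$. Your caveat about the parameter range --- the survey states the existence of $G$ only for $k\in\Ck_+$ while the theorem is asserted for $k\in\Ck^A_+$, so one must import the continuation of $G$ in $k$ and its regularity there from \cite{OGHA} --- is exactly the right point to flag, and it is indeed supplied by the cited reference.
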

Let $\textup{PW}(\fa^*_\bbc)$ denote the space of Paley--Wiener functions on $\fa^*_\bbc$, which is the space of entire functions $\phi$ on 
$\fa^*_\bbc$ such that for every $N\in\bbn$ and $R\in\bbr_+$ there exist a constant $C_N\in\bbr_+$ such that 
\begin{equation}
|\phi(\gl)|\leq C_N(1+|\gl|)^{-N}e^{|\textup{Re}(\gl)|}
\end{equation}
We define some additional structure which uses the $W$--action on $A$. 
Given $a\in A$, denote by $C_a\subset A$ the convex hull of $Wa$ (i.e. the image under the exponential map 
of the convex hull of $W\log(a)\in\fa$). 
Let $H_a:\fa^*\to\bbr_{\geq 0}$ be the function $H_a(\gl):=\textup{max}_{y\in C_a}\gl(\log(y))$. 
We say that $\phi\in \textup{PW}_a(\fa^*_\bbc)$ if for every $N\in\bbn$ there exist a constant $C_N\in\bbr_+$ such that for all $\gl\in\fa^*_\bbc$:
\begin{equation}
|\phi(\gl)|\leq C_N(1+|\gl|)^{-N}e^{H_a(\gl)}
\end{equation}
Then it is not difficult to see that $\textup{PW}(\fa^*_\bbc)=\bigcup_{a\in A}\textup{PW}_a(\fa^*_\bbc)$. 

We now formulate a Paley--Wiener theorem in this context, based on the embedding maps $\Cj_{\gl,k}$ for $\gl\in\fa^*_\bbc$. 
First we define the Fourier transform $\Cf_k(f)$ of a compactly supported smooth function $f\in C_c^\infty(A)$ 
as follows.\footnote{$\Cf_k$ corresponds to the operator $\pi^{-1}\Cf$ in \cite[Section 8]{OGHA}.}
Let $\gl\in\fa^*_\bbc$. Then:   
\begin{equation}
\Cf_k(f)(\gl):=\int_{a\in A}f(a)G(-w_0\gl,k;w_0a)d\mu^A(k;a)
\end{equation}

The wave packet operator $\Cj_k$ is defined 
as follows\footnote{$\Cj_k$ corresponds to $\Cj_k\pi$ in \cite[Section 8]{OGHA}.} 
on the space of Paley--Wiener function on $\fa^*_\bbc$. 
Let $\phi\in \textup{PW}(\fa^*_\bbc)$. Then we define for $a\in A$: 
\begin{equation}
\Cj_k(\phi)(a):=\int_{i\fa^*}\phi(\gl)G(\gl,k;a)\prod_{\ga\in R^0_+}\left(1-\frac{k_\ga+\frac{1}{2}k_{\ga/2}}{\gl(\ga^\vee)}\right)d\nu(\gl)
\end{equation}
where 
\begin{equation}
d\nu(\gl)=\frac{(2\pi )^{-n}\tilde{c}_{w_0}^2(\rho(k),k)}
{\tilde{c}(\gl,k)\tilde{c}(w_0\gl,k)}d\textup{Im}(\gl)
\end{equation}
The main Theorem in this section is the refined Paley--Wiener Theorem in this context. For all $a\in A$ we have:
\begin{thm}(\cite[Thm. 9.13]{OGHA})
We have 
\[\Cf_k:C_c^\infty(A)\to \textup{PW}(\fa^*_\bbc)\aand\Cj_k:\textup{PW}(\fa^*_\bbc)\to C_c^\infty(A)\]
These operators are morphisms of $\bbh$-modules which are each other's inverse. Moreover, for each $a\in A$ 
we have 
\[\Cf_k(C_c^\infty(C_a))=\textup{PW}_a(\fa^*_\bbc)\aand\Cj_k(\textup{PW}_a(\fa^*_\bbc))=C_c^\infty(A)\]
\end{thm}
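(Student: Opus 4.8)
The plan is to follow Rosenberg's strategy for the Paley--Wiener theorem on Riemannian symmetric spaces, transported to the Dunkl--Cherednik setting and to arbitrary $k\in\Ck^A_+$, as carried out in \cite[Section 9]{OGHA}. There are three packets of work: (a) the transform $\Cf_k$ sends $C_c^\infty(C_a)$ into $\textup{PW}_a(\fa_c^*)$; (b) the wave packet operator $\Cj_k$ sends $\textup{PW}_a(\fa_c^*)$ into $C_c^\infty(C_a)$; (c) $\Cf_k$ and $\Cj_k$ are $\bbh(R_+,k)$-module maps and are mutually inverse. The first and last then give $\Cf_k:C_c^\infty(A)\to\textup{PW}(\fa_c^*)$ and $\Cj_k:\textup{PW}(\fa_c^*)\to C_c^\infty(A)$, while (a)--(b) together with the inversion yield the refined equalities involving the convex hulls $C_a$.

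For (a) I would start from the sharp global bound $|G(\gl,k;b)|\leq C\,e^{\max_{w\in W}(\re(w\gl))(\log b)}$, valid for all $\gl\in\fa_c^*$ and $b\in A$, which is deduced from the uniformly convergent Harish--Chandra series for the asymptotically free solutions on $A_+$ combined with a convexity argument extending the estimate from a chamber to all of $A$; since $G(\cdot,k;b)$ is holomorphic, this already produces the exponential factor $e^{H_a(\gl)}$ of the Paley--Wiener estimate once $\supp f\subset C_a$. Rapid decay in $|\gl|$ would come from the eigenvalue equations $T(\xi,k)G(\mu,k;\cdot)=\mu(\xi)G(\mu,k;\cdot)$ and the $*$-structure of Theorem~\ref{thm:star}: pairing against $f$ and moving operators across shows $p(\gl)\,\Cf_k(f)(\gl)=\Cf_k(\eta_v(h_p)f)(\gl)$ for each $p\in S(\ft)^W$ and a suitable $h_p\in\bbh$, and since $\eta_v(h_p)$ preserves $C_c^\infty(A)$ and keeps support inside the $W$-invariant set $C_a$, iterating with $p(\gl)=\langle\gl,\gl\rangle$ gives $|\Cf_k(f)(\gl)|\leq C_N(1+|\gl|)^{-N}e^{H_a(\gl)}$ for all $N$.

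For (b), convergence of the integral defining $\Cj_k(\phi)$ and smoothness of the result are immediate from the rapid decay of $\phi$ and the bounds on $G$. The point is the support statement, and here I would argue by contour shifting: for $b\in A$ with $\log b\notin C_a$, deform the contour $i\fa^*$ to $i\fa^*+\eta$ with $\eta$ tending to infinity inside a Weyl chamber chosen so that the exponential factor $e^{H_a(\gl)+\max_w(\re(w\gl))(\log b)}$ decays along the deformation, forcing $\Cj_k(\phi)(b)=0$ in the limit. Making this rigorous requires tracking the poles crossed during the shift --- the zeros in the denominator of $d\nu$, coming from $\tilde c(\gl,k)\tilde c(w_0\gl,k)$, and the poles in $\gl$ of the meromorphically continued eigenfunction $G(\gl,k;b)$ --- and checking that the correction factor $\prod_{\ga\in R^0_+}\bigl(1-\tfrac{k_\ga+\frac12 k_{\ga/2}}{\gl(\ga^\vee)}\bigr)$ is precisely what makes the residual terms reassemble into wave packet integrals for standard Levi subalgebras $\bbh_\gl\subset\bbh$ of strictly lower rank, which then vanish by induction on the rank. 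I expect this residue bookkeeping --- locating all the poles and proving the lower-rank recombination --- to be the main obstacle.

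For (c), $\bbh$-equivariance is comparatively routine: via the intertwiners $\Cj_{\gl,k}:I_\gl\to C^\infty(A)$ and the $*$-invariant pairing $I_\gl\times I_{-\overline\gl}\to\bbc$ one realizes $\Cf_k(f)$ as a holomorphic section $\gl\mapsto I_\gl$, and the adjoint formulas of Theorem~\ref{thm:adjoint} show that both $\Cf_k$ and $\Cj_k$ intertwine the $\eta_v$-action on $C_c^\infty(A)$ with the natural $\bbh$-action on such sections. For the inversion I would first prove $\Cf_k\circ\Cj_k=\id$ on $\textup{PW}(\fa_c^*)$: substitute the definitions, apply Fubini (licit by (a)--(b)), and evaluate the inner $A$-integral by pushing $\re\gl$ deep into $\fa_+$, where only the leading asymptotic term $a^{w_0\gl-\rho(k)}$ of $G$ contributes and the computation reduces to ordinary Fourier inversion on the vector group $A$, the crossed poles cancelling by the mechanism of (b). The identity $\Cj_k\circ\Cf_k=\id$ then follows from the $L^2$-Plancherel theorem of \cite{OGHA}, by which $\Cf_k$ is injective on $C_c^\infty(A)$: for $f\in C_c^\infty(A)$ one has $\Cj_k(\Cf_k f)\in C_c^\infty(A)$ by (b), and $\Cf_k\bigl(\Cj_k(\Cf_k f)\bigr)=(\Cf_k\circ\Cj_k)(\Cf_k f)=\Cf_k f$, so injectivity gives $\Cj_k(\Cf_k f)=f$. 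Finally, the two support inclusions from (a) and (b) together with these inversions give $\Cf_k(C_c^\infty(C_a))=\textup{PW}_a(\fa_c^*)$ and $\Cj_k(\textup{PW}_a(\fa_c^*))=C_c^\infty(C_a)$.
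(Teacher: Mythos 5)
Your outline of (a) and of the equivariance part of (c) is consistent with the proof in \cite{OGHA} (the present paper only cites that reference), but the central step (b) --- the support theorem for wave packets --- does not go through as you describe it, for two reasons. First, the bound you propose to shift with cannot produce decay: with $|\phi(\gl)|\leq C_N(1+|\gl|)^{-N}e^{H_a(\gl)}$ and the global estimate $|G(\gl,k;b)|\leq Ce^{\max_w(\re(w\gl))(\log b)}$, a deformation $\gl\mapsto \gl+t\eta$ produces the exponent $t\bigl(H_a(\eta)+\max_w(w\eta)(\log b)\bigr)$, which is a sum of two maxima of a linear functional over sets whose convex hulls contain $0$, hence is nonnegative for \emph{every} real direction $\eta$ (in rank one it equals $t|\eta|(|\log a|+|\log b|)$); so no choice of chamber for $\eta$ makes the integrand small, no matter where $b$ sits. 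The actual argument must first replace $G(\gl,k;b)$ on the chamber containing $\log b$ by its expansion into the $|W|$ asymptotically free solutions $\Phi_{w\gl}$ (with $c$-function--type coefficients), and then shift the contour of each of the $|W|$ resulting integrals separately, in the direction adapted to $w$. Second, once this regrouping is done, the point for $k\in\Ck^A_+$ is that \emph{no} poles are crossed at all: the factor $\prod_{\ga\in R^0_+}\bigl(1-(k_\ga+\tfrac12 k_{\ga/2})/\gl(\ga^\vee)\bigr)$ combined with $1/(\tilde c(\gl,k)\tilde c(w_0\gl,k))$ is holomorphic on the closure of the relevant tubes, the Gamma-factor poles lying strictly beyond them for nonnegative parameters. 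Your fallback mechanism --- that crossed residues ``reassemble into wave packet integrals for standard Levi subalgebras which then vanish by induction on the rank'' --- is unsupported: no induction hypothesis is in place that would make lower-rank wave packets vanish, and in general such residues do \emph{not} vanish; they are exactly what produces the additional lower-dimensional spectral series when $k$ becomes negative (cf. \cite{ODS}). As written, the key step of (b), and hence the support half of the theorem, fails.

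There is also a circularity in (c): you obtain $\Cj_k\circ\Cf_k=\id$ from injectivity of $\Cf_k$ on $C_c^\infty(A)$, quoting ``the $L^2$-Plancherel theorem of \cite{OGHA}''; but that Plancherel theorem is deduced \emph{from} the present Paley--Wiener theorem (this is how it is presented both here and in \cite{OGHA}), so it cannot be invoked at this stage. You need an independent source of injectivity --- for instance reducing to the already established $W$-invariant (symmetric) inversion via the $\bbh(R_+,k)$-module structure, or proving $\Cj_k\circ\Cf_k=\id$ directly by the same asymptotic/contour computation you sketch for $\Cf_k\circ\Cj_k$ --- before the two-sided inversion, and with it the refined equalities $\Cf_k(C_c^\infty(C_a))=\textup{PW}_a(\fa^*_c)$ and $\Cj_k(\textup{PW}_a(\fa^*_c))=C_c^\infty(C_a)$, can be concluded.
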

\begin{remark}(Comparison of $\Cf_k$ to the spherical transform $\Cf^G_{sph}$ of Harish--Chandra.)  
The restriction $\Cf_k^W$ of $\Cf_k$ to $C_c^\infty(A)^W$ yields an isomorphism $\Cf_k^W:C_c^\infty(A)^W\to \textup{PW}(\fa^*_\bbc)^W$. 

In case the parameters $k_\ga$ correspond to a Riemannian symmetric space $X$, the restriction map 
$\textup{Res}:C_c^\infty(G/\!\!/K)\to C_c^\infty(A)^W$ given by $\textup{Res}(f)=f|_A$ is bijective (this follows easily from the Paley--Wiener 
Theorem of the spherical transform $\Cf_{sph}^G$ of $X$), and we have for $f\in C_c^\infty(G/\!\!/K)$ that: 
\begin{equation*}
\Cf_k^W(\textup{Res}(f))=\Cf_{sph}^G(f). 
\end{equation*}
(See the discussion following Propositions \ref{prop:ImAb} and \ref{prop:transmut} and Corollary
\ref{cor:ATCcandS}.)

Observe that (for all $D\in \ffR(\fg/\!\!/K)$, $f\in C_c(G/\!\!/K)$, $\gl\in\fa^*_\bbc$):
\begin{equation}\label{eq:mix}
\Cf_{sph}^G(D*f)(\gl)=\tilde{\gc}_S(D)(\gl)\Cf_{sph}^G(f)(\gl), 
\end{equation}
where $\tilde{\gc}_S$ is the Harish--Chandra homomorphism from (\ref{eq:HCS}).
\end{remark}
It is not difficult to see from Theorem \ref{thm:adjoint}(ii) that $\Cf_k$ and $\Cj_k$ are also formally adjoint 
to each other. It follows that: 
\begin{thm}(\cite[Thm. 9.13]{OGHA})
The transform $\Cf_k$ has a unique extension to $L^2(A,d\mu^A)$, and this defines an isometric isomorphism 
of unitary $*$--representations of $\bbh(R_+,k)$:
\begin{equation}
L^2(A,d\mu^A)\simeq \int_{i\fa^*_+}^\oplus I_\gl\nu(\gl)
\end{equation}
\end{thm}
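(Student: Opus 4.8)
The plan is to \emph{upgrade} the refined Paley--Wiener theorem stated above to an $L^2$-statement. That theorem already supplies two mutually inverse $\bbh(R_+,k)$-module isomorphisms $\Cf_k\colon C_c^\infty(A)\to\textup{PW}(\fa^*_c)$ and $\Cj_k\colon\textup{PW}(\fa^*_c)\to C_c^\infty(A)$, and --- as observed just after it, using Theorem~\ref{thm:adjoint}(ii) --- these are formally adjoint: $(\Cj_k\phi,g)_k=\langle\phi,\Cf_k g\rangle_{\textup{PW}}$ for $\phi\in\textup{PW}(\fa^*_c)$ and $g\in C_c^\infty(A)$, where $\langle\,\cdot\,,\,\cdot\,\rangle_{\textup{PW}}$ is the Hermitian form read off the wave-packet kernel of $\Cj_k$, i.e.\ (up to the $|W|^{-1}$ and the $c$-function factors) $\int_{i\fa^*}\phi(\gl)\overline{\psi(\gl)}\prod_{\ga\in R^0_+}\bigl(1-\tfrac{k_\ga+\frac12 k_{\ga/2}}{\gl(\ga^\vee)}\bigr)\,d\nu(\gl)$. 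For $k\in\Ck^A_+$ no discrete series occurs, the contour $i\fa^*$ is the correct one and need not be shifted, and the integral converges because $\phi,\psi$ are Paley--Wiener.

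Feeding the inversion $g=\Cj_k(\Cf_k g)$ into formal adjointness yields, for $f,g\in C_c^\infty(A)$,
\[
(f,g)_k=(\Cj_k\Cf_k f,\,g)_k=\langle\Cf_k f,\ \Cf_k g\rangle_{\textup{PW}},
\]
and the task is to recognise the right-hand side as $\int_{i\fa^*_+}(\,\cdot\,,\,\cdot\,)_{I_\gl}\,d\nu(\gl)$ for a suitable vector-valued Fourier transform $\widehat f(\gl)\in I_\gl$. Since $i\fa^*_+$ is a fundamental domain for $W$ on $i\fa^*$ and (as one checks from its product formula) $d\nu$ is $W$-invariant, the $\textup{PW}$-integral folds into an integral over $i\fa^*_+$ of a sum over the $W$-orbit; the $|W|$ scalar values $\{\Cf_k(f)(w\gl)\}_{w\in W}$ then assemble, with the $c$-function normalisations supplied by the folding, into a single vector $\widehat f(\gl)$ of $I_\gl=\Ind_{\bba}^{\bbh}\bbc_\gl$, whose basis $\{w\otimes1\}_{w\in W}$ --- the $\bba$-weight vectors, of weights $w\gl$ --- is \emph{orthonormal} for the $*$-invariant pairing $I_\gl\times I_{-\overline\gl}\to\bbc$ recalled just above (and $-\overline\gl=\gl$ on $i\fa^*$). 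Carrying this out turns the last display into the Plancherel identity $(f,g)_k=\int_{i\fa^*_+}(\widehat f(\gl),\widehat g(\gl))_{I_\gl}\,d\nu(\gl)$ on $C_c^\infty(A)$, so $f\mapsto\widehat f$ is isometric into $\int_{i\fa^*_+}^\oplus I_\gl\,\nu(\gl)$.

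It remains to pass to $L^2$ and to check that the structure is respected. Since $\delta(k;a)$ is locally integrable for $k\in\Ck^A_+$, $C_c^\infty(A)$ is dense in $L^2(A,d\mu^A)$, so $f\mapsto\widehat f$ extends uniquely to an isometry of all of $L^2(A,d\mu^A)$ into the direct integral. The $\bbh(R_+,k)$-equivariance is inherited from the Paley--Wiener theorem: $\Cf_k$ intertwines $\eta_v$ with the module structure on $\textup{PW}(\fa^*_c)$ in which $p\in S\fa$ acts by multiplication by $p(\gl)$ and $w\in W$ by the $W$-action on $\fa^*_c$, and under the assembling $\phi\mapsto(\gl\mapsto\widehat\phi(\gl))$ this becomes exactly the fibrewise action on $I_\gl$ (multiplication by $w\gl$ on the $w\otimes1$-component for $p\in S\fa$; permutation of the components for $w\in W$). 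Surjectivity: the wave-packet operator $\Cj_k$ extends, by the same adjointness argument together with the Paley--Wiener inversion, to the adjoint isometry, which is then a two-sided inverse; equivalently, the image of $f\mapsto\widehat f$ already contains a dense family in $\int_{i\fa^*_+}^\oplus I_\gl\,\nu(\gl)$, and a densely defined isometry with dense image is unitary. Together these give the asserted isometric isomorphism of unitary $*$-representations of $\bbh(R_+,k)$.

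The step I expect to be the main obstacle is the middle one: identifying the folded $\textup{PW}$-pairing with \emph{precisely} $\int_{i\fa^*_+}(\,\cdot\,,\,\cdot\,)_{I_\gl}\,d\nu(\gl)$ and the stated density $d\nu$, with the factor $\prod_{\ga\in R^0_+}(1-\tfrac{k_\ga+\frac12 k_{\ga/2}}{\gl(\ga^\vee)})$ correctly absorbed into the normalisations $\mu_w(\gl,k)$. This is a Maass--Selberg-type identity: it amounts to knowing the inner products $(\,{}^wG(\gl,k;\cdot),\,{}^{w'}G(\gl,k;\cdot)\,)_k$ on the $|W|$-dimensional eigenspace $\Cj_{\gl,k}(I_\gl)$ --- equivalently the constants relating $G$, the polynomial eigenfunctions $E(\mu,k)$, and the intertwiners among the $I_{w\gl}$ --- in closed form in terms of $\tilde c_w$ and $c^*_w$. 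Granting the Paley--Wiener theorem above, in whose proof this computation is effectively already made, the remaining steps --- density, continuity, and the bookkeeping of formal adjointness --- are soft.
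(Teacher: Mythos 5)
Your proposal follows essentially the same route as the paper: the text deduces the theorem directly from the refined Paley--Wiener theorem together with the formal adjointness of $\Cf_k$ and $\Cj_k$ (via Theorem \ref{thm:adjoint}(ii)), extending by density of $C_c^\infty(A)$ in $L^2(A,d\mu^A)$, which is exactly your argument. The one step you flag as the main obstacle --- folding the Paley--Wiener pairing over $W$ and matching it with the fibre inner products on $I_\gl$ and the density $d\nu$ --- is precisely what the paper delegates to \cite[Theorem 9.13]{OGHA}, so your treatment is at the same level of detail and consistent with it.
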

\subsection{Further applications and developments}
Let us make some comments on some of the many very interesting further developments from here. 
\begin{remark} The orthogonal polynomials $P(\gl,k;\cdot)$, and their nonsymmetric counterparts $E(\gl,k;\cdot)$ have been generalised 
by Macdonald \cite{Ma1}, \cite{Mac96} and Koornwinder \cite{Koor92}. This family is a common generalisation of the Askey--Wilson 
orthogonal polynomials in one variable, and the root system orthogonal polynomials $P(\gl,k;\cdot)$. They are joint eigenfunctions 
of a commutative ring of difference-reflection operators which arises from a representation of the so-called 
double affine Hecke algebra, constructed by I. Cherednik \cite{C}, \cite{C95}. These polynomials depend on a set of 
parameters $q_i$ and $t$, and in the limit for $t\to 1$ (with $q_i$ certain powers of $t$) we recover the $P(\gl,k;\cdot)$.

This creates a uniform framework for the Macdonald-Koornwinder polynomials and the double affine Hecke algebra, 
in which the $p$--adic case and the (compact) real case coexist. The double affine Hecke algebra is built from two ordinary affine 
Hecke algebras sharing a finite type Hecke algebra in common, and whose translation parts are in duality. The Cherednik Fourier 
transform gives an isomorphism of the double affine Hecke algebra for $G$ to the double affine Hecke algebra of its Langlands 
dual group $G^\vee$. Macdonald has made the remarkable observation that the spherical functions of a $p$--adic group and their 
real counterparts seem to exist on the opposite sides of this duality. An in depth analysis of this situation can be found in \cite{BKP}.

One of the many remarkable aspects of the double 
affine Hecke algebra is its large group of automorphisms, which cannot be seen at the level of the affine Hecke algebra. These 
have important consequences for the theory of the Macdonald-Koornwinder polynomials. Important contributions and insights 
were made by Sahi \cite{Sah}, Ion and Sahi \cite{IoSa}, and Sahi, Stokman, and Venkateswaran \cite{SSV}. 

Formidable work and a lot of activity by many mathematicians has been aimed at attempting to understand Cherednik's double 
affine Hecke algebra in terms of more conventional geometric, representation theoretic and mathematical physical structures. 
We do not attempt to give a comprehensive overview, as there are simply too many directions and contributions 
to be mentioned and this would lead us too far afield. The authors content themselves mentioning some of the authors involved in these 
endeavors (and apologises for the many omissions and oversights): Braverman, Etingof and Finkelberg \cite{BEF}, 
Jordan \cite{J}, Kato, Khoroshkin and Makedonskyi \cite{KKM}, Kapranov \cite{K}, Oblomkov \cite{ob}, Varagnolo and Vasserot \cite{VV}, \cite{VV2}, 
Wen \cite{W}, Yun \cite{Y}.
\end{remark}
\begin{remark}
 The commutativity of the spherical algebras $\mathbf{H}(G,K)$ and $\eta(e_+\bbh(R_+,k)e_+)=\eta(S(\fa)^W)
 =\langle D(p,k)\mid p\in S(\fa)^W\rangle$ can be interpreted 
as the complete integrability of two famous $1$-dimensional $n$-particle quantum systems: The Yang system and the 
Calogero-Moser system respectively. 
The "Macdonald generalisation" of these systems, known as the Macdonald-Ruijsenaars-Schneider integrable model \cite{Ruis} or 
relativistic Calogero-Moser system, is the subject of an active field of research. 
The open quantum Toda chain is known to be a confluent limit of such systems \cite{DE}. 
Further elliptic generalisations have been found and studied \cite{Dell}, \cite{KoHi}.
\end{remark}
\begin{remark}
The spectral decomposition of the (densely defined) representation of the spherical algebra $e_+.\bbh.e_+\subset \bbh$ on 
$L^2(A,\gd(k;a)da)^W$ 
for parameters in $\Ck^A_+$ has also been proved by P. Delorme \cite{Del}, by introducing an appropriate Schwartz space, more 
in the spirit of the original approach by Harish--Chandra. It would be interesting to generalise this approach to 
the representation $\eta_v$, densely defined on $L^2(A,\gd(k;a)da)$. 

The spectral decomposition of the representation of the spherical algebra $e_+.\bH.e_+$ on $L^2(A,\gd(k;a)da)^W$  
is also well understood for negative parameters $k$ such that $\gd(k;a)da$ is still locally integrable \cite{ODS}. 
The spectrum is now no longer purely continuous, but lower dimensional spectral series appear. 
It woud be interesting to extend the Paley--Wiener Theorem and the definition of the Schwartz space 
to this context.
\end{remark}
\begin{remark}
A deep aspect of the theory of these higher rank hypergeometric functions $F(\gl,k)$ is their role as period 
map for certain algebraic surfaces, for very special values of the parameters (analogous to 
the interpretation of e.g. ${}_2F_1(1/2,1/2,1;z)$ as a period integral of elliptic curves). 

Couwenberg, Heckman and Looijenga \cite{CHL} have made deep studies of the parameters where the monodromy group 
of (subsystems of) the hypergeometric system becomes an arithmetic group, and Heckman and Looijenga \cite{HL} 
found period map interpretations for certain $F(\lambda,k)$ for the root system of $R=E_8$. 
\end{remark}

\section{Shift operators}

\newcommand {\ep}{\epsilon_+}

We have studied the representations $\eta_v(k)$ and $\eta_u(k)$ of the graded affine Hecke algebra
$\bbh(R_+,k)$ on $C_c^\infty(A)$ and $\bbc[T_u]$ respectively. For special half--integral values of the
parameters $k=(k_\ga)$, their restrictions 
to the spherical algebra $e_+\cdot
\bbh(R_+,k)\cdot e_+\simeq S(\fa)^W$ can be thought of as taking the radial part on $A\subset X=G/K$ of the 
inverse of Harish--Chandra's isomorphism. In this present section, we study the relation between these
representations of $\bbh(R_+,k)$ for different values of $k$. 

\subsection{The Abel transform}

If $k\in \Ck$ arises from a Riemannian symmetric space $X=G/K$, the Abel transform $F_f$ provides
such a relation on the level of the spherical algebra, between $k$ and the parameter $k=0$. Recall the
restriction map $\textup{Res}: C_c^\infty(G/\!\!/K)\to C_c^\infty(A)^W$. It follows from the Paley--Wiener
theorem and Helgason's Paley--Wiener Theorem \cite{Helgason} that for any $f\in C_c^\infty(A)^W$,
there is a unique $\tilde{f}\in C^\infty_c(G/\!\!/K)$ such that $\textup{Res}(\tilde{f})=f$. Thus, $\textup
{Res}$ is bijective and we may define
\begin{align*}
\Ca_k:C_c^\infty(A)^W&\to C_c^\infty(A)^W\\
f&\to\Cf_{\tilde{f}}
\end{align*} 
Then, by Harish--Chandra's result (Prop. \ref{prop:ImAb}), the following ``transmutation property'' holds
for all $p\in S(\fa)^W$ (with $D(p,k):=\eta_v(k)(p)$): 
\begin{equation}
\Ca_k(\eta_v(k)(p))=\Ca_k(D(p,k)f)=\partial(p)\Ca_k(f)=\eta_v(0)(p)(\Ca_k(f))
\end{equation}

\subsection{Hypergeometric shift operators}
These are $W$--invariant \emph{differential} operators which have a similar
transmutation property, this time not between $k$ and $0$ but between $k$ and $k+b$, where $b$ is
a fixed translation vector in $\Ck$. The fact that we require the shift operator to be a differential one
implies that the translation $b\in\Ck$ needs to be an element in a certain integral lattice $\Ck_\bbz\subset
\Ck$. For simplicity of presentation we will concentrate on the main case, where $b=\pm1\in\Ck$, where
$1$ is given by the following
\begin{df}
We denote by $1\in\Ck$ the multiplicity parameter such that $1_\ga=1$ if $2\ga\not\in R$, and otherwise $1_\ga=0$. 
\end{df}
The defining property of a hypergeometric shift operator $G$ is that for all $k\in\Ck$ and $p\in S(\ft)^W$,
one should have
\begin{equation*}
G_\pm(k)\circ D(p,k)=D(p,k\pm 1)\circ G_\pm(k)
\end{equation*}
Hypergeometric shift operators $G_\pm(k)$ are effective tools in the study of the hypergeometric system. To list 
a few nontrivial results in which they play a crucial role (cf. \cite{O89}): 
\begin{itemize}
\item[(a)] The inner product $\langle1,1\rangle_k$ has been computed using these operators (leading to Macdonald's constant term conjecture, 
and the Mehta-Dyson conjecture).
\item[(b)] The norms and values at $1$ of the Jacobi polynomials $P(\gl,k)$ (Macdonald's norm and evaluation formulas). 
\item[(c)] The computation of the Bernstein-Sato polynomial of the discriminant $d=(\prod_{\ga\in R^0_+}\ga)^2\in \bbc[\ft]^W$ of $W$.
\item[(d)] The proof that $F(\gl,k;1)=1$, with the given normalisations \cite{OGau}. 
\item[(e)] The inversion of the Abel transform in case the multiplicities of the restricted roots are all even \cite{beer}.
\end{itemize}

\subsection{Heckman's construction} 

Heckman \cite{He91} introduced an elementary method to obtain the hypergeometric shift operators which
were originally constructed in \cite{O4} by transcendental methods. In \cite{OGHA}, a slight variation of
Heckman's method was discussed.

Let $k\in\Ck$ and let $\Delta=\prod_{\ga\in R^0_+}(e^{\alpha/2}-e^
{-\alpha/2})\in\bbc[T]$ be the Weyl denominator.
\begin{itemize}
\item[(a)] Set $\pi^\pm(k):=\prod_{\ga\in R^0_+}(\ga^\vee\pm (k_\ga+\frac{1}{2}k_{\ga/2}))\in S(\ft)$, and let 
$\epsilon^+$  (resp. $\epsilon^-$) be the central idempotent in $\bbc[W]$ corresponding to the trivial (resp. sign)
character. Then, the following holds {in $\bH(R_+,k)$}
\begin{equation}\label{eq:em ep}
\epsilon^\mp\cdot\pi^\pm(k)\cdot\epsilon^\pm=\pi^\pm(k).\epsilon^\pm
\end{equation}
\item[(b)] Set $G_+(k):=\Delta^{-1}\circ T(\pi^+(k),k)|_{\bbc[T]^W}$.
This is a $W$--invariant polynomial differential operator on $T$ such that, for any $p\in S(\ft)^W$ we have:  
$G_+(k)\circ D(p,k)=D(p,k+1)\circ G_+(k)$  {(Hence $G_+(k)$ is a raising operator)}.
\item[(c)] Define $G_-(k):=T(\pi^-(k-1),k-1)\circ \Delta|_{\bbc[T]^W}$.
This is a $W$--invariant polynomial differential operator on $T$ such that, for any
$p\in S(\ft)^W$ we have: $G_-(k)\circ D(p,k)=D(p,k-1)\circ G_-(k)$  {(Hence $G_-(k)$
is a lowering operator)}.
\end{itemize}

\subsection{Nonsymmetric shift operators}

A natural question in the present discussion concerns the existence of shift operators
for the Dunkl--Cherednik operators $T(\xi,k)$ themselves, not just for the $W$--invariant
differential operators $D(p,k)$. 

Recall that a {\it differential--reflection} operator on $T$ is a $\bbc$--linear endomorphism
of $\bbc(T)$ of the form $D=\sum_{w\in W}D_w w$, where each $D_w$ is a differential
operator on $T$ whose coefficients are rational functions. We announce the following
result.\footnote{The results in this section were announced at Patrick Delorme's 70th
birthday conference in Luminy in June 2023, and the Harish--Chandra
centenary conference held in Allahabad in October 2023 (a video recording of the latter talk is available at:
{\sf https://www.youtube.com/playlist?list=PLt4li39O1V1Vq1yzPdIsOp7yo92XOsHh0}).}

\begin{thm}[\cite{OT}]\label{th:OT}
There is a differential--reflection operator $S$ on $T$ with coefficients in $\bbq
[\Ck][T](\Delta^{-1})$, such that 
\begin{itemize}
\item[(i)] $S\in \End_{\bbq[\Ck]}(\bbq[\Ck][T])$, that is $S$ preserves polynomials.
\item[(ii)] $S(k)\circ T(\xi,k)=T(\xi,k+1)\circ S(k)$ for any $\xi\in \ft$. 
\item[(iii)] $S(k)|_{\bbc[T]^W}=G_+(k)\in \bA(W\backslash T)$. 
\end{itemize}
Moreover, $S$ is the unique $\bbq(\Ck)$--linear endomorphism of $\bbq(\Ck)[T]$
satisfying (ii) and (iii). 
\end{thm}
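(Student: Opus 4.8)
\emph{Setup and reductions.} The plan is to work over the field $\bbq(\Ck)$ of rational functions in the multiplicity parameters, to produce and characterise $S$ there, and then to descend: statement (i) (preservation of $\bbq[\Ck][T]$) and the precise coefficient ring $\bbq[\Ck][T](\Delta^{-1})$ are a final integrality check once the operator has been constructed over $\bbq(\Ck)$. Throughout I would use the faithful Dunkl--Cherednik representation $\eta_u(k)$ of $\bbh(R_+,k)$ on $\bbc[T]$, the polynomial eigenbasis $\{E(\mu,k)\}_{\mu\in P}$ (Heckman's nonsymmetric polynomials), whose joint $T(\xi,k)$-eigenvalue is $\mu+w^\mu(\rho(k))$, the triangularity $E(\mu,k)=t^\mu+\sum_{\nu\lhd\mu}c_\nu t^\nu$, and the symmetric shift operator $G_+(k)=\Delta^{-1}\circ T(\pi^+(k),k)|_{\bbc[T]^W}$ together with the idempotent identity $\epsilon^\mp\cdot\pi^\pm(k)\cdot\epsilon^\pm=\pi^\pm(k)\cdot\epsilon^\pm$ in $\bbh(R_+,k)$.

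\emph{Uniqueness (and the candidate).} Over $\bbq(\Ck)$ the eigenvalues $\mu+w^\mu(\rho(k))$ are pairwise distinct, so $\bbq(\Ck)[T]=\bigoplus_{\mu\in P}\bbq(\Ck)E(\mu,k)$ is a multiplicity--free $S(\ft)$-module with one-dimensional weight spaces, and likewise for $k+1$. Property (ii) says precisely that $S(k)$ is a morphism from this $S(\ft)$-module structure to the one attached to $k+1$, hence it must carry each line $\bbq(\Ck)E(\mu,k)$ to the line of the eigenvector with the same weight for $k+1$, or to $0$ if none exists. Comparing the $k$- and $(k+1)$-eigenvalues as affine functions of $k$ forces the target to be $E(\nu,k+1)$ with $\nu=\mu-w^\mu(\rho(1))$, and then only when $\nu\in P$ and $w^\nu=w^\mu$ (otherwise $S(k)E(\mu,k)=0$). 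Thus $S(k)$ is determined by one scalar $c_\mu(k)$ per $\mu$. To pin these down I would use that every $\mu$ lies in $W\lambda$ for a unique dominant $\lambda$, that the Jacobi polynomial $P(\lambda,k)\in\bbq(\Ck)[T]^W$ has a known expansion $\sum_{\nu\in W\lambda}d_\nu(k)E(\nu,k)$ with all $d_\nu(k)\ne 0$ (from the Proposition relating the $E$'s to $G$, the $\tilde c$-ratios being nonzero generically), while (iii) prescribes $S(k)P(\lambda,k)=G_+(k)P(\lambda,k)$; expanding the right--hand side in the $E(\cdot,k+1)$-basis and solving the resulting linear system in the $c_\nu(k)$, $\nu\in W\lambda$, determines each one. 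Coincidences among the shifted weights $\nu-w^\nu(\rho(1))$ for $\nu\in W\lambda$ do not obstruct this, because whenever two of them collide the consistency condition $w^{\nu-w^\nu(\rho(1))}=w^\nu$ fails for at least one, forcing that $c_\nu(k)=0$. This gives uniqueness and, simultaneously, the explicit candidate for $S$.

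\emph{Existence.} I would construct $S$ in one of two equivalent ways. The first is a Heckman--style elementary construction: lift the identity $G_+(k)=\Delta^{-1}\circ T(\pi^+(k),k)|_{\bbc[T]^W}$ to an operator on all of $\bbc[T]$ by using the full representation $\eta_u(k)$ and the idempotent identities in $\bbh(R_+,k)$ to produce, on each $W$-isotypic component, an element of $\eta_u(k)\bigl(\bbh(R_+,k)\bigr)$ whose image after left--division by $\Delta$ remains polynomial; the work is to write down the correct ``non-symmetrised'' analogue of $T(\pi^+(k),k)$ and to check the transmutation (ii) directly from relation (iv) of the graded affine Hecke algebra. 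The second route is to \emph{define} $S(k)$ by the eigenvalue--matching recipe forced above and then prove a posteriori that it is a differential--reflection operator with coefficients in $\bbq[\Ck][T](\Delta^{-1})$. For this I would exploit that the map $\mu\mapsto\mu-w^\mu(\rho(1))$ moves the leading exponent only by an element of the finite set $-W\rho(1)$, so $S(k)$ is finite--band in the $E$-basis; combined with the triangularity of the $E(\mu,k)$ this bounds the order of $S(k)$ and confines its singularities to the reflection hyperplanes (since the $T(\xi,k)$ are regular off $\{\Delta=0\}$ and $S(k)$ intertwines them), giving the coefficient ring; finally, tracking that the $c_\mu(k)$, being built from the polynomial-in-$k$ operator $G_+(k)$ by solving unitriangular systems with denominators that cancel, lie in $\bbq[\Ck][\Delta^{-1}]$, yields (i). Property (iii) holds by construction, and (ii) holds because the eigenvalue matching is exact.

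\emph{Expected main obstacle.} The hard part is the last step of existence: showing that the abstractly prescribed operator is genuinely a differential--reflection operator with precisely the coefficient ring $\bbq[\Ck][T](\Delta^{-1})$ and preserves polynomials in $k$ as well --- that is, establishing the \emph{regularity} (bounded order, poles only on $\{\Delta=0\}$, no spurious poles in $k$). Even in rank one one sees that $S(k)$ has genuinely higher--order pieces not pinned down by low--degree data, so this cannot be checked by hand; it requires a careful analysis of how the nonsymmetric polynomials $E(\mu,k)$ behave under the $\rho(1)$-shift, using the explicit structure of $G_+(k)$ as the boundary data that propagates, via the $\bbh(R_+,k)$-module structure on $\bbc[T]$, to an operator on the whole of $\bbc[T]$.
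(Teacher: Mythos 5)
Your uniqueness argument is essentially complete and correct, and it is surely the way uniqueness must be proved: over $\bbq(\Ck)$ the operators $T(\xi,k)$ and $T(\xi,k+1)$ act semisimply on $\bbq(\Ck)[T]$ with simple joint spectrum in the bases $E(\mu,k)$, $E(\nu,k+1)$, so (ii) forces any linear map to send $E(\mu,k)$ to a multiple of $E(\mu-w^\mu(\rho(1)),k+1)$ when $w^{\mu-w^\mu(\rho(1))}=w^\mu$ and to $0$ otherwise, and (iii), applied to the symmetric Jacobi polynomials (whose expansions in the $E(\nu,k)$, $\nu\in W\mu_+$, have generically nonvanishing coefficients), then determines every scalar; your handling of possible collisions $\nu_1-w^{\nu_1}(\rho(1))=\nu_2-w^{\nu_2}(\rho(1))$ is also right. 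Note that the present paper only announces Theorem \ref{th:OT}, referring to \cite{OT} for the proof, so there is no in-paper argument to compare with; judged on its own, this half of your proposal stands.

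The genuine gap is existence, which is where the actual content of the theorem lies: that the spectrally prescribed map is a differential--reflection operator with coefficients in $\bbq[\Ck][T](\Delta^{-1})$ and preserves $\bbq[\Ck][T]$, i.e.\ is defined over $\bbq[\Ck]$ and not merely over $\bbq(\Ck)$. Your route 2 rests on two unjustified inferences: finite-bandedness of $\mu\mapsto\mu-w^\mu(\rho(1))$ together with triangularity of the $E(\mu,k)$ does not bound the order of $S(k)$, nor does it show $S(k)$ has the form $\sum_w D_w w$ with rational coefficients at all --- that the eigenbasis-defined map is a differential--reflection operator is precisely what must be proved; and the assertion that intertwining confines poles to $\{\Delta=0\}$ presupposes that $S(k)$ is already an operator of this type. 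Polynomiality in $k$ is likewise only asserted (``denominators that cancel''), and the remark that the $c_\mu(k)$ lie in ``$\bbq[\Ck][\Delta^{-1}]$'' is a category confusion: the $c_\mu(k)$ are scalars in $\bbq(\Ck)$, while $\Delta\in\bbc[T]$. Your route 1 names the missing object --- a nonsymmetric analogue of $T(\pi^+(k),k)$ --- without constructing it, and the paper's rank-one formula indicates that this analogue requires genuinely new input: the operator $X\partial_X-\frac{1-s}{1-X^{-2}}$ is independent of $k$ and, for generic $k$, does not lie in $\eta_u(k)(\bbh(R_+,k))$, so $S$ is not $\Delta^{-1}\circ\eta_u(k)(h)$ for any $h\in\bbh(R_+,k)$; moreover $S$ does not commute with $W$ (in rank one $S(\Delta)=-1$, so anti-invariants go to invariants), so an isotypic-by-isotypic use of the idempotent identities would still leave the main problem --- producing a single operator with the stated coefficient ring and $k$-integrality --- untouched. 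In short, you have proved the uniqueness statement and correctly identified the candidate, but properties (i)--(iii) as an existence claim are left exactly where they started.
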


We call $S$ the {\it nonsymmetric shift operator}.

\Omit{
We call $S$ the {\it nonsymmetric shift operator}. We note in passing that a similar
nonsymmetric {lowering} operator (i.e. the nonsymmetric version of $G_-(k)$)
does \emph{not} exist, even as a linear operator preserving polynomials \cite{OT}.\\
}

\subsection{Adjoints of nonsymmetric shift operators}

Interestingly, a nonsymmetric lowering operator, that is a $\bbq(\Ck)$--linear
endomorphism of $\bbq(\Ck)[T]$ satisfying property (ii) with $k+1$ replaced
by $k-1$, and restricting to $G_-(k)$ on invariant polynomials does \emph{not}
exist \cite{OT}. However, an operator satisfying a variant of the restriction
property (iii) does exist, and may be obtained from $S(k)$ using adjoints
as follows.

Let $S(k)^\bullet$ be the formal adjoint of $S(k)$ \wrt the inner products $\langle
\cdot,\cdot\rangle_{k}$ and $\langle\cdot,\cdot\rangle_{k+1}$ on $\Cc^\infty(T_u)$
(see \ref{ss:inner product}), so that $\langle S(k)^\bullet f,g\rangle_{k+1}=\langle
f,S(k)g\rangle_{k}$ for all $f,g\in C^\infty(T_u)$. Then, it is easy to see that 
$S_-(k):= S(k-1)^{\bullet}$ is a differential--reflection operator which 
preserves polynomials, satisfies the transmutation property
\[S_-(k)\circ T(\xi,k)=T(\xi,k-1)\circ S_-(k)\]
together with the {\it left} restriction property $\ep S_-=\ep  G_-(k)$. This follows
because $T(\xi,k)^\bullet=T(\overline{\xi},k)$ by Theorem \ref{thm:adjoint}, and
$G_+(k-1)^\bullet=G_-(k)$.\Omitcomment{I slightly reformulated since it actually requires a little argument 
to show that the formal adjoint preserves polynomials. The reason is that by the 
triangularity relative to $\leq_W$, $S_-(k)(e^\lambda)$ is orthogonal to all $E(\mu,k)$  
for $|\mu|>>|\lambda|$.}

Similarly, let $S(k)^*$ be the formal adjoint of $S(k)$ \wrt the inner products $(\cdot,
\cdot)_{k}$ and $(\cdot,\cdot)_{k+1}$ on $C_c^\infty(A)$. Then one can prove that
the operator $\wt{S}_-(k):={}^{w_0}(S(k-1)^*)$ is a differential--reflection operator
which preserves  $C_c^\infty(A)$,\Omitcomment{ERIC: This requires the nonsymmetric
Paley--Wiener Theorem of \cite{OGHA}} satisfies the transmutation property
\[\wt{S}_-(k)\circ T(\xi,k)=T(\xi,k-1)\circ \wt{S}_-(k)\]
and the left restriction property
$\ep \wt{S}_-=(-1)^{|R^0_+|}\ep  G_-(k)$.\footnote{It is shown in \cite{OT} that
$\wt{S}_-(k)$ and $S_-(k)$ coincide up to a sign as differential--reflection operators.
Specifically, $S(k)^*$ extends to $C^\infty(A)$, and the restriction of ${}^{w_0}(S
(k)^*)$ to $\bbc[T]\subset C^\infty(A)$ is equal to $(-1)^{|R^0_+|}S(k)^\bullet$.}

\subsection{Compositional identities}

Motivated in part by the above, we consider the composition of $S(k)$ with its formal
adjoints.

\begin{thm}[\cite{OT}]
The following identities hold. 
\begin{enumerate}
\item On $\bbc[T]$ one has:
\begin{equation}
S(k)^\bullet\circ S(k)=\prod_{\ga\in R^0_+}(T_{\ga^\vee}(k)+k^0_\ga)\circ (T_{\ga^\vee}(k)-k^0_\ga-1)\\[1.2ex]
\end{equation}
\item On $C_c^\infty(A)$ one has:
\begin{equation}
{}^{w_0}(S(k)^*)\circ S(k)=\prod_{\ga\in R^0_+}(T_{\ga^\vee}(k)+k^0_\ga)\circ (-T_{\ga^\vee}(k)+k^0_\ga+1)
\end{equation}
\end{enumerate}
\Omit{
\begin{align*}\label{eq:comp}
S(k)^\bullet\circ S(k)&=\prod_{\ga\in R^0_+}(T_{\ga^\vee}(k)+k^0_\ga)\circ (T_{\ga^\vee}(k)-k^0_\ga-1)\\[1.2ex]
{}^{w_0}(S(k)^*)\circ S(k)&=\prod_{\ga\in R^0_+}(T_{\ga^\vee}(k)+k^0_\ga)\circ (-T_{\ga^\vee}(k)+k^0_\ga+1)
\end{align*} 
where, for any unmultipliable root $\ga\in R^0$, we set $k^0_\ga=k_\ga+\frac{1}{2}k_{\ga/2}$,
with $k_{\ga/2}=0$ if $\ga/2\notin R$.}
\end{thm}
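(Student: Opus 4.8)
The plan is to establish the first identity; the second then follows, since the footnote above gives ${}^{w_0}(S(k)^*)|_{\bbc[T]}=(-1)^{|R^0_+|}S(k)^\bullet$ and $(-1)^{|R^0_+|}\prod_{\ga\in R^0_+}(T_{\ga^\vee}(k)-k^0_\ga-1)=\prod_{\ga\in R^0_+}(-T_{\ga^\vee}(k)+k^0_\ga+1)$. As every operator in sight is defined over $\bbq(\Ck)$ and depends polynomially on $k$, it suffices to argue for $k$ in a Zariski--dense subset of $\Ck$ — say $k\in\Ck^T_+$ with $\rho(k)$ regular — so that the polynomial eigenfunctions $E(\mu,k)$, $\mu\in P$, have pairwise distinct joint eigenvalues $\tilde\mu(k):=\mu+w^\mu\rho(k)$. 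I abbreviate $k^0_\ga:=k_\ga+\tfrac12 k_{\ga/2}$ (with $k_{\ga/2}=0$ when $\ga/2\notin R$) and $\rho^0:=\rho(1)=\tfrac12\sum_{\ga\in R^0_+}\ga$, noting $\rho(k+1)=\rho(k)+\rho^0$ and $(k+1)^0_\ga=k^0_\ga+1$ for $\ga\in R^0$.

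The first step is to show that $P(k):=S(k)^\bullet\circ S(k)$ commutes with every Dunkl--Cherednik operator $T(\xi,k)$. Taking $\bullet$--adjoints (for $\langle\cdot,\cdot\rangle_k$ and $\langle\cdot,\cdot\rangle_{k+1}$) in the transmutation identity $S(k)\circ T(\xi,k)=T(\xi,k+1)\circ S(k)$ of Theorem~\ref{th:OT}(ii), together with $T(\xi,k)^\bullet=T(\overline\xi,k)$ from Theorem~\ref{thm:adjoint}(i) at the parameters $k$ and $k+1$, yields $S(k)^\bullet\circ T(\xi,k+1)=T(\xi,k)\circ S(k)^\bullet$; composing gives the claim. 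Since $P(k)$ is a differential--reflection operator (with coefficients in $\bbq[\Ck][T](\Delta^{-1})$, by Theorem~\ref{th:OT} and the footnote) commuting with all the $T(\xi,k)$, it must be of the form $\eta_u(k)(q)$ for a unique $q\in S(\ft)$; it therefore suffices to prove $q=R(k):=\prod_{\ga\in R^0_+}(\ga^\vee+k^0_\ga)(\ga^\vee-k^0_\ga-1)$. Both $\eta_u(k)(q)$ and $\eta_u(k)(R(k))$ act on $E(\mu,k)$ by the scalar obtained from evaluating at $\tilde\mu(k)$, and the eigenvalues $\tilde\mu(k)$ with $\mu\in P_+$ are Zariski--dense in $\ft^*$; hence it is enough to match the scalar of $P(k)$ on $E(\mu,k)$ with $R(k)(\mu+\rho(k))$ for all dominant $\mu$ deep in the chamber (so $w^\mu=e$).

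For such $\mu$, $\bullet$--adjointness gives that the scalar of $P(k)$ on $E(\mu,k)$ equals $\Vert S(k)E(\mu,k)\Vert_{k+1}^2/\Vert E(\mu,k)\Vert_k^2$. Because $S(k)$ intertwines $T(\cdot,k)$ and $T(\cdot,k+1)$, the polynomial $S(k)E(\mu,k)$ is a joint $T(\cdot,k+1)$--eigenfunction of eigenvalue $\mu+\rho(k)=(\mu-\rho^0)+\rho(k+1)$; for $\mu$ deep this forces $S(k)E(\mu,k)=g_\mu(k)\,E(\mu-\rho^0,k+1)$, and comparing the coefficient of the leading monomial $t^{\mu-\rho^0}$ on both sides of $S(k)P(\mu,k)=G_+(k)P(\mu,k)$ — using $S(k)|_{\bbc[T]^W}=G_+(k)$ of Theorem~\ref{th:OT}(iii) and $P(\mu,k)=E(\mu,k)+(\text{lower }E\text{'s})$ — identifies $g_\mu(k)$ with the known explicit shift coefficient of Heckman's operator $G_+(k)$. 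Plugging into $g_\mu(k)^2\,\Vert E(\mu-\rho^0,k+1)\Vert_{k+1}^2/\Vert E(\mu,k)\Vert_k^2$ the explicit product formula for $\Vert E(\cdot,\cdot)\Vert^2$ stated before the theorem, the Gindikin--Karpelevi\v c product formula for the $c$--functions, and the formula for $g_\mu(k)$, one gets a product of quotients of $\Gamma$--values; I expect the contributions of the multipliable roots to cancel between numerator and denominator, and $\Gamma(x+1)=x\Gamma(x)$ to collapse the rest to $\prod_{\ga\in R^0_+}\bigl((\mu+\rho(k))(\ga^\vee)+k^0_\ga\bigr)\bigl((\mu+\rho(k))(\ga^\vee)-k^0_\ga-1\bigr)=R(k)(\mu+\rho(k))$, as wanted.

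I expect the final step to be the main obstacle: keeping track of the combinatorics of multipliable versus unmultipliable roots so that the $\Gamma$--quotients align and $(k+1)^0_\ga=k^0_\ga+1$ is used correctly, pinning down $g_\mu(k)$ in the precise shape it takes for a unit shift, and checking that the large products from the two norm formulas and from $g_\mu(k)$ telescope exactly to the displayed quadratic product; the identification of the centraliser of $\{T(\xi,k)\}$ with $\eta_u(k)(S(\ft))$, used in the second step, is the other point requiring care (cf.\ \cite{OGHA}). An alternative would be a rank--one reduction with induction on $|R^0_+|$, from the explicit rank--one hypergeometric picture of Section~\ref{Section:HA}; there the rank--one factors of $S(k)$ do not commute, so one must check that the cross terms in $S(k)^\bullet\circ S(k)$ cancel, which looks comparably delicate.
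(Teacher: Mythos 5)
You should note first that the paper itself contains no proof of this theorem: it is announced from \cite{OT} (``to appear''), so your argument has to stand on its own, and as written it is a strategy with two essential gaps rather than a proof. The first gap is the centralizer step. Commutation of $P(k):=S(k)^\bullet\circ S(k)$ with all $T(\xi,k)$ (which you derive correctly by taking $\bullet$-adjoints in Theorem \ref{th:OT}(ii) and using Theorem \ref{thm:adjoint}) only tells you that $P(k)$ acts diagonally in the basis $E(\mu,k)$, $\mu\in P$, for generic $k$; to conclude $P(k)=\eta_u(k)(q)$ with $q\in S(\ft)$ you need that the eigenvalue function $\mu\mapsto c_\mu$ is given by evaluating a polynomial at the spectral parameter $\mu+w^\mu\rho(k)$, and nothing quoted in the paper yields this: $P(k)$ is not a priori in the image of $\eta_u$, nor in the trigonometric Cherednik algebra where a maximal-commutativity argument would be available, so the claim that the commutant of $\{T(\xi,k)\}$ inside the differential--reflection operators preserving $\bbc[T]$ is exactly $\eta_u(k)(S(\ft))$ is a genuine lemma you have not proved. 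Without it, matching eigenvalues only for dominant $\mu$ deep in the chamber (where $w^\mu=e$) does not determine the operator; you would instead have to compute $c_\mu$ for \emph{all} $\mu\in P$, where the image $S(k)E(\mu,k)$, the relevant norm formula (with the $\delta_w$-shifts in $c_w^*$, $\tilde c_w$) and the proportionality constant are all more delicate.

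The second gap is that the heart of the verification is left as an expectation. The determination of $g_\mu(k)$ as you describe it is not yet an argument: comparing ``the leading monomial on both sides of $S(k)P(\mu,k)=G_+(k)P(\mu,k)$'' is vacuous, since by Theorem \ref{th:OT}(iii) the two sides are literally the same; what is needed is to expand $P(\mu,k)$ in the $E(w\mu,k)$ (noting that $t^\mu$ also occurs in $E(w\mu,k)$ for $w\neq e$, so the coefficient of $E(\mu,k)$ in $P(\mu,k)$ is not automatically $1$), use the known constant in $G_+(k)P(\mu,k)=c_\mu(k)\,P(\mu-\rho^0,k+1)$, and project onto the eigenvalue $\mu+\rho(k)$ at level $k+1$. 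Likewise the collapse of the $\Gamma$-quotients coming from $\Vert E(\mu,k)\Vert_k^2$, $\Vert E(\mu-\rho^0,k+1)\Vert_{k+1}^2$ and $g_\mu(k)^2$ to $\prod_{\ga\in R^0_+}\bigl((\mu+\rho(k))(\ga^\vee)+k^0_\ga\bigr)\bigl((\mu+\rho(k))(\ga^\vee)-k^0_\ga-1\bigr)$ is exactly the computation that would prove the theorem, and you only say you expect it to work (the multipliable-root bookkeeping, e.g. in type $\mathrm{BC}$, is where this can fail silently). You also use implicitly that $S(k)^\bullet$ is a differential--reflection operator with rational coefficients preserving $\bbc[T]$, which the paper itself indicates requires an argument. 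The parts of your plan that are sound as stated are the adjoint intertwining relation $S(k)^\bullet\circ T(\xi,k+1)=T(\xi,k)\circ S(k)^\bullet$, the reduction of the second identity to the first via the quoted relation ${}^{w_0}(S(k)^*)|_{\bbc[T]}=(-1)^{|R^0_+|}S(k)^\bullet$ together with the fact that differential--reflection operators agreeing on $\bbc[T]$ agree on $C_c^\infty(A)$, and the Zariski-density reduction in $k$; but the two gaps above are where the actual content of the theorem lies.
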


The above identities extend those satisfied by the hypergeometric shift operator $G(k)
_+$. These can be easily obtained from \cite[Lemma 5.10, Def. 5.11, Thm. 5.13(b)]
{OLect}, and read as follows\Omitcomment{I wonder if the rhs of the $GG$ identities should
be written either with a restriction to invariant polynomials sign, or (right) sandwiched by $\ep$.
This seems a good idea for consistency, but also for deriving the $GG$ identities from
the $SS$ ones below.}
\begin{align*}
G(k)_+^\bullet\circ G(k)_+	&=\prod_{\ga\in R^0_+}(T_{\ga^\vee}(k)+k^0_\ga)\circ(T_{\ga^\vee}(k)-k^0_\ga)|_{\bbc[T]^W}\\[1.1ex]
G(k)_+^*\circ G(k)_+		&=\prod_{\ga\in R^0_+}(T_{\ga^\vee}(k)+k^0_\ga)\circ(-T_{\ga^\vee}(k)+k^0_\ga)|_{C_c^\infty(A)^W}
\end{align*}

Note the subtle difference between the two sets of identities. The right--hand side of
those involving $G(k)_+$ are $W$--invariant, as expected, while those involving $S(k)$
are not. Nevertheless, the latter give rise to the former upon being left and right multiplied
by the idempotent $\ep$. Indeed, the left and right restriction properties of $S(k)$ 
and its adjoints imply that
\[\begin{split}
G(k)_+^\bullet\circ G(k)_+
&=
\ep S(k)^\bullet\circ S(k)\ep\\
&=
\ep \prod_{\ga\in R^0_+}(T_{\ga^\vee}(k)+k^0_\ga)\circ (T_{\ga^\vee}(k)-k^0_\ga)\ep
+\ep T(p,k) \pi_k^+ \ep
\end{split}\]
where
\[ p = 
\sum_{m=0}^{|R_+^0|-1}
(-1)^{|R_+^0|-m}\sum_{\substack{\overline{R}\subset R_+^0:\\|\overline{R}|=m}}
\prod_{\alpha\in\overline{R}} T_{\alpha^\vee}(k) \]
This yields the required result since $\pi_k^+\ep=\epsilon_- \pi_k^+\ep$ by \eqref
{eq:em ep}, and $\ep T(p,k)\epsilon_{-}=0$ for all $p\in S(\mathfrak{a})$ with
$\textup{deg}(p)<|R_+^0|$.

\subsection{Rank 1}

\newcommand {\h}{\mathfrak h}
\newcommand {\IZ}{\mathbb{Z}}
\newcommand {\IC}{\mathbb{C}}
\newcommand {\sfBC}{\mathsf{BC}}
\newcommand {\half}[1]{\frac{#1}{2}}

As an illustration, let $R=\{\pm\alpha,\pm\alpha/2\}$ be the root system of type $\sfBC_1$.
The corresponding dual root system is $R^\vee=\{\pm\alpha^\vee,\pm2\alpha^\vee\}$, the
weight lattice is $P=\IZ\alpha/2$, and the torus $T=\Hom_{\IZ}(P,\IC^\times)$ has the
natural coordinate $X=e^{\alpha/2}$.

A direct computation shows that the nonsymmetric shift operator of Theorem \ref{th:OT} is
given by
\[\begin{split}
S 
&=
\Delta^{-1}\left(X\partial_{X}-\frac{1-s}{1-X^{-2}}\right)\\
&=
\Delta^{-1}\left(T(\alpha^\vee;k)+k_{\alpha}+\half{1}k_{\alpha/2}
-k_{\alpha/2}\frac{1-s}{1-X^{-1}}
-(2k_\alpha+1)\frac{1-s}{1-X^{-2}}\right)
\end{split}\]
where $\Delta=X-X^{-1}$.

Similarly, the hypergeometric shift operator for the basic shift $(k_{\alpha/2},k_\alpha)
\to(k_{\alpha/2}+2,k_\alpha-1)$ (as considered in \cite[p. 26]{O3}) admits a nonsymmetric
version. It is given by
\[S_{(2,-1)} =
\frac{X+1}{X-1}X\partial_{X}-\frac{X}{(X-1)^2}(1-s)+k_{\alpha}-\half{1}
\]
Contrary to the shift by $1\in\Ck$, the corresponding nonsymmetric shift operator with the opposite 
shift $(k_{\alpha/2},k_\alpha)
\to(k_{\alpha/2}-2,k_\alpha+1)$ also exists now. It is given by
\[S_{(-2,1)} =
\frac{X-1}{X+1}X\partial_{X}+\frac{X}{(X+1)^2}(1-s)+\left(k_{\alpha}+k_{\alpha/2}-\half{1}\right)
\]

\Omit{
\subsection{Rank 1}

As an illustration, assume that $R=\{\pm\alpha\}$ is of rank 1. Let $\alpha\in\h^*$ be a choice
of positive root, and $\alpha^\vee\in\h,\lambda=\alpha/2\in\h^*$ the corresponding coroot and
fundamental weight. Let $P=\IZ\lambda\subset\h^*$ be the weight lattice, and $T=\Hom_{\IZ}
(P,\IC^\times)$ the corresponding torus. We denote the characters of $H$ by $\{e^\mu\}_{\mu
\in P}$. Thus, $X=e^\lambda$ is a coordinate on $T$, and $e^\alpha=X^2$.

Then, a direct computation shows that the non--symmetric shift operator is given by
\[\begin{split}
S 
&=
\Delta^{-1}\left(T(\alpha^\vee,k)+k-(2k+1)\frac{1-s}{1-X^{-2}}\right)\\
&=
\Delta^{-1}\left(X\partial_{X}-\frac{1-s}{1-X^{-2}}\right)
\end{split}\]
}

\end{document}